\theoremstyle{plain}
\newtheorem{theorem}{Theorem}[section]
\newtheorem{lemma}[theorem]{Lemma}
\newtheorem{proposition}[theorem]{Proposition}
\theoremstyle{definition} 
\newtheorem{definition}[theorem]{Definition}
\begin{document}

\author{Matteo Dalla Riva  \thanks{Dipartimento di Ingegneria, Universit\`a degli Studi di Palermo, Viale delle Scienze, Ed. 8, 90128 Palermo, Italy.} ,  Paolo Luzzini \thanks{Dipartimento di Matematica `Tullio Levi Civita', Universit\`a degli Studi di Padova, Via Trieste 63, 35121 Padova , Italy.}}

\title{Dependence of the layer heat potentials upon support perturbations}
\maketitle

\abstract{We prove that the integral operators associated with the 
 layer heat potentials depend smoothly upon a parametrization of the support of integration. The analysis is carried out in the optimal H\"older setting.}  
 
\vspace{10pt}

\noindent
{\bf Keywords:}  Heat equation; layer heat potentials; transmission problem; shape sensitivity analysis;   perturbed domain.
\vspace{9pt}

\noindent   
{{\bf 2020 Mathematics Subject Classification:}}  31B10; 47G10; 35K05; 35K20.

\section{Introduction}
Potential theory is a powerful tool for analyzing boundary value problems for partial differential equations, particularly in the case of elliptic and parabolic operators. Along with existence, uniqueness, and regularity issues, potential-theoretic methods can be used to study perturbation problems. For example, potential-theoretic techniques can be employed to analyze how a solution depends on a deformation of the domain. To apply this kind of approach, it is important to understand how layer potentials and other potential-type integral operators depend on variations of the support of integration.

In the literature, most of the research in this direction focuses on the elliptic case. We mention, for example,  Coifman and Meyer's  results \cite{CoMe83} and Wu's results \cite{Wu93} on the analytic dependence of the Cauchy integral on a variable arc-length parametrized curve. Moreover, Potthast's work \cite{Po96I, Po94, Po96II} and Potthast and Stratis' results \cite{PoSt03} establish a Fr\'echet differentiability result for layer potentials associated with the Helmholtz operator and apply these findings to some inverse problems in acoustic and electromagnetic scattering. Lanza de Cristoforis and Preciso \cite{LaPr99} showed that the Cauchy integral depends analytically on a parametrization of the support. Later, Lanza de Cristoforis and Rossi \cite{LaRo04, LaRo08} considered the case of layer potentials associated with the Laplace and Helmholtz operators and proved real analyticity results which were  used in \cite{La06, La07} to study domain perturbation problems for the Laplace and Poisson equations. In \cite{DaLuMu21}, we have employed similar techniques for a perturbed obstacle scattering problem. The case of layer potentials associated with a general second-order elliptic operator with constant coefficients has been analyzed in \cite{DaLa10}, while the periodic and quasi-periodic cases are considered in Lanza de Cristoforis and Musolino \cite{LaMu11} and \cite{BrDaLuMu22}.  In a collaboration with Musolino and Pukhtaievych we have exploited the results on the periodic potentials for a shape sensitivity analysis of the longitudinal flow along a periodic array of cylinders and the effective conductivity of periodic composites (see, e.g., \cite{LuMuPu19, LuMu20, DaLuMuPu21}). More recently, Henr\'iquez and Schwab \cite{HeSc21} proved that the Calder\'on projector of the Laplacian in $\mathbb{R}^2$ is an holomorphic function of the shape of the support, a result that has been extended to $\mathbb{R}^n$ in \cite{DaLuMu22}.

Moreover, several authors have explored elliptic domain perturbation problems using approaches other than potential theory. Examples include Bucur and Buttazzo \cite{BuBu05}, Daners \cite{Da08}, Delfour and Zol\'esio \cite{DeZo11}, Henrot and Pierre \cite{HePi05}, Novotny and Soko\l owski \cite{NoSo13}, Pironneau \cite{Pi84}, and Soko\l owski and Zol\'esio \cite{SoZo92}.

So, we can conclude that the literature on shape stability and regularity is fairly rich in the case of elliptic operators. The parabolic case, instead, seems to be far less understood. Daners \cite{Da96} studied the stability of solutions of parabolic problems upon domain perturbation, while Chapko, Kress, and Yoon \cite{ChKrYo98, ChKrYo99} proved shape differentiability results for the solutions of the Dirichlet and Neumann problems for the heat equation. Then, they applied these results to some inverse problems in heat conduction. However, to the best of our knowledge, higher regularity results are still unavailable. Moreover, an analysis of the shape dependence of the integral operators arising in parabolic potential theory is lacking in the literature. To fill this gap, this paper aims to develop a high regularity theory by analyzing the dependence of layer heat potentials on variations in the shape of the support of integration. To wit, we fix a regular  open 
subset $\Omega$ of $\mathbb{R}^n$, which plays the role of a reference set,  and we
introduce a class of diffeomorphisms $\mathcal{A}_{\partial \Omega}$ from $\partial \Omega$ to $\mathbb{R}^n$ 
 (see Definition \ref{Asets} and Figure \ref{fig:phi}). 
%If $\phi \in \mathcal{A}_{\partial \Omega}$, the Jordan-Leray separation theorem ensures that 
%$\mathbb{R}^n \setminus \phi(\partial\Omega)$ has exactly two open connected components. Then we denote by 
%$\mathbb{I}[\phi]$ the bounded  open connected component of 
%$\mathbb{R}^n \setminus \phi(\partial\Omega)$ and by $\mathbb{E}[\phi]$ the unbounded one 
Our main results prove that the maps that take a function $\phi\in \mathcal{A}_{\partial \Omega}$ to certain layer heat potentials supported on $\phi(\partial\Omega)$ are of class $C^\infty$.

We now describe these results  with a few more details. 
Let $S_n: \mathbb{R}^{1+n} \setminus \{0,0\} \to \mathbb{R}$ denote the usual fundamental solution of the heat operator, that is 
\begin{equation*} 
S_{n}(t,x):=
\begin{cases}
 \frac{1}{(4\pi t)^{\frac{n}{2}} }e^{-\frac{|x|^{2}}{4t}}&\quad{\mathrm{if}}\ (t,x) \in  \mathopen]0,+\infty\mathclose[  \times \mathbb{R}^n\,, 
 \\
 0 &\quad{\mathrm{if}}\  (t,x) \in  \left(\mathopen]-\infty,0] \times \mathbb{R}^n\right)\setminus \{(0,0)\}\,.
\end{cases}
\end{equation*}
Throughout the paper we find convenient to adopt the following notation: if $D$ is a subset of $\mathbb{R}^n$,
$T \in \mathopen]0,+\infty[$ and $h$ is a map from $D$ to $\mathbb{R}^n$, we denote by $h^T$ the map from  $[0,T] \times D$
 to  $[0,T] \times \mathbb{R}^n$ defined by 
\[
h^T(t,x) := (t, h(x)) \qquad \forall (t,x) \in  [0,T] \times D.
\]
Let $\phi \in \mathcal{A}_{\partial \Omega}$ and let $\mu$ be a continuous function from 
$ [0,T]  \times \partial\Omega$ to $\mathbb{R}$.  In order to work with a space of densities that is not $\phi$-dependent,
it makes sense to consider the single layer potential with density $\mu \circ (\phi^T)^{(-1)}$: 
 \begin{align*}
 v\big[\mu\, \circ\, &(\phi^T)^{(-1)}\big](t,x)\\
  :=& \int_0^t\int_{\phi(\partial\Omega)} S_n(t-\tau,x-y) \mu \circ (\phi^T)^{(-1)} (\tau,y) \,d\sigma_yd\tau\\
 &= \int_0^t\int_{\phi(\partial\Omega)} S_n(t-\tau,x-y) \mu(\tau,\phi^{(-1)}(y)) \,d\sigma_yd\tau \qquad \forall 
 (t,x) \in [0,T] \times \mathbb{R}^n.
 \end{align*}
Moreover, in the applications it is often convenient to consider the boundary integral operator associated with the $\phi$-pullback of the single layer potential, that is
 \begin{equation}\label{Vdef}
 V_{\phi}[\mu] :=  v\big[\mu \circ (\phi^T)^{(-1)}\big] \circ \phi^T \qquad \mbox{ on } [0,T] \times \partial\Omega.
 \end{equation}
We also consider the maps 
 \begin{align}\label{Vldef}
& V_{l,\phi}[\mu](t,x) := \int_0^t\int_{\phi(\partial\Omega)} \partial_{x_l}S_n(t-\tau,\phi(x)-y) \mu \circ (\phi^T)^{(-1)} (\tau,y) \,d\sigma_yd\tau,\\ \label{V*def}
  &W_{*,\phi}[\mu](t,x) :=\int_0^t\int_{\phi(\partial\Omega)}D_xS_n(t-\tau,\phi(x)-y)\cdot \nu_{\phi}(x) \mu \circ (\phi^T)^{(-1)} (\tau,y) \,d\sigma_yd\tau,\\ \label{Wdef}
   &W_\phi[\mu](t,x) := -\int_0^t\int_{\phi(\partial\Omega)}D_xS_n(t-\tau,\phi(x)-y)\cdot \nu_{\phi}(y) \mu \circ (\phi^T)^{(-1)} (\tau,y) \,d\sigma_yd\tau,
 \end{align}
 for all $(t,x) \in  [0,T] \times \partial\Omega$  and all $l\in\{1,\dots,n\}$. Here above, $ \partial_{x_l}S_n$ denotes the $x_l$-derivate of $S_n$    and $D_xS_n$ is the gradient 
 of $S_n$ with respect to the spatial variables, whereas $\nu_{\phi}$ denotes the exterior unit normal field to $\phi(\partial\Omega)$. The maps $V_{l,\phi}$ and $W_{*,\phi}$ are associated with the 
  $\phi$-pullback of the $x_l$ and normal derivatives of the single layer potential, respectively, and the map $W_\phi$ is associated with the  $\phi$-pullback of the double layer potential.  The main result of this paper states that the maps from a certain subset of $\mathcal{A}_{\partial \Omega}$ to suitable spaces of operators, which take $\phi$ to $V_\phi$, $V_{l,\phi}$, $W_{*,\phi}$, and $W_\phi$, belong to the class $C^\infty$ (see Theorem \ref{thm:main}).
 
 Our strategy to prove the result proceeds as follows. First, we characterize layer potentials as solutions of a transmission problem on a $\phi$-dependent domain. We then pullback this problem to a fixed domain and obtain a new abstract $\phi$-dependent problem on a fixed set. Finally, we use the  real analyticity of the inversion map to establish the regularity upon shape perturbations. It's worth noting that the non-homogeneous term in the heat equation for the transmission problem includes a distributional term of the form $\partial_t f$, as described in Theorem \ref{auxtranpb} and Lemma \ref{eqhe1}. Standard results in parabolic theory do not cover this case, hence we relied on new results on the heat volume potential that the second author developed for this purpose and presented in \cite{Lu21}. We acknowledge that a direct change of variable, as described in Chapko, Kress and Yoon \cite{ChKrYo98, ChKrYo99}, can avoid the issue of $\partial_t f$ in the equation. However, their approach requires the boundary to be $C^2$, and it does not work in the optimal H\"older setting, i.e. with  sets of class $C^{1,\alpha}$.  In contrast, our approach maintains sharp assumptions on the regularity of sets and perturbations.

The strategy we have just described stems from the approach proposed by Lanza de Cristoforis and Rossi 
 \cite{LaRo04, LaRo08}  to analyze the shape dependence of layer potentials for the Laplace and Helmoltz operators. 
The extension to the heat equation is, however, by no means  straightforward. On the contrary, we have to be very careful when dealing with the space-time anisotropy and we have to identify a suitable functional setting with 
 the correct regularity for the space and the time components.

 { We also observe that, instead of the real and complex analyticity results that are typical in the elliptic case (see, e.g., \cite{DaLa10} and \cite{DaLuMu22}) we have to content ourselves with a smoothness result. This seems to be an unavoidable issue for the heat equation, and its origin can be traced to the regularity of the fundamental solution  $S_n$. Specifically, $S_n$ is real analytic in $x$ for a fixed $t\neq 0$, but only $C^\infty$ in $t$ for a fixed $x\neq 0$. 

  In some  forthcoming papers, we plan to use the results presented here to study the shape sensitivity of the solutions to linear and nonlinear boundary value problems for the heat equation.
 
The paper is organized as follows.  In Section \ref{sec:notation} we introduce some notation and present some known preliminary results: in Section \ref{sec:standard} we fix some standard notation and definitions, in Section \ref{sec:spaces} we introduce certain parabolic Shauder spaces, in Section \ref{sec:diffeomors} we define the class of diffeomorphisms $\mathcal{A}_{\partial\Omega}$ and present some related results, and, finally, in Section \ref{sec:potentials} we recall the definition of layer heat potentials and their basic properties. In Section \ref{sec:trans} we prove the unique solvability of a certain auxiliary transmission problem. This is the problem that later we use to characterize the layer potentials.  Then, in Section \ref{sec:pullback}, we $\phi$-pullback the problem to a fixed domain and we analyze the dependence of the pulled-back problem upon the shape parameter $\phi$. Finally,
Section \ref{sec:main} contains our main results on the dependence of the layer potentials upon $\phi$. In Appendix \ref{appA} we collect some technical results on composition and integral operators.

\section{Preliminaries}\label{sec:notation}
%We  denote the norm on 
%a   normed space ${\mathcal X}$ by $\|\cdot\|_{{\mathcal X}}$. Let 
%${\mathcal X}$ and ${\mathcal Y}$ be normed spaces. We endow the  
%space ${\mathcal X}\times {\mathcal Y}$ with the norm defined by 
%$\|(x,y)\|_{{\mathcal X}\times {\mathcal Y}}:= \|x\|_{{\mathcal X}}+
%\|y\|_{{\mathcal Y}}$ for all $(x,y)\in  {\mathcal X}\times {\mathcal 
%Y}$, while we use the Euclidean norm for ${\mathbb{R}}^{n}$.
\subsection{Some standard notation}\label{sec:standard}
 For 
standard definitions of calculus in normed spaces, and in particular for the definition and properties of real analytic operators,  we refer to 
Deimling~\cite{De85}.
The inverse of an invertible function $f$ is denoted by $f^{(-1)}$, while the reciprocal of a complex-valued function $g$ is denoted by $g^{-1}$.
%A dot ``$\cdot$'' denotes the inner product in $\mathbb{R}^n$. 
If  $A$ is a matrix,  then $A^\top$ denotes the transpose matrix of $A$ and $A_{ij}$ denotes the $(i,j)$-entry of
$A$. If $A$ is invertible, $A^{-1}$ is the inverse of $A$ and we set $A^{-\top}:= (A^{-1})^\top$.

The symbol $\mathbb{N}$ denotes the set of natural numbers including $0$. Throughout the paper,
\[
n \in \mathbb{N} \setminus \{0,1\}
\]
denotes the dimension of the Euclidean ambient space $\mathbb{R}^n$.
If
${\mathbb{D}}\subseteq {\mathbb {R}}^{n}$, then $\overline{\mathbb{D}}$ 
denotes the 
closure of ${\mathbb{D}}$ and $\partial{\mathbb{D}}$ denotes the boundary of ${\mathbb{D}}$. 
% The symbol
%$| \cdot|$ denotes the Euclidean modulus   in
%${\mathbb{R}}^{n}$ or in $\mathbb{C}$. 
%For all $R>0$, $ x\in{\mathbb{R}}^{n}$,  
%the symbol  ${\mathbb{B}}_{n}( x,R)$ denotes the ball $\{
%y\in{\mathbb{R}}^{n}:\, | x- y|<R\}$ and $x_{j}$ denotes the $j$-th coordinate of $x$ for all $j\in\{1,\dots,n\}$. 
If $\mathcal{X}$ is a normed space, then 
$B({\mathbb{D}}, {\mathcal{X}})$ and $C^{0}({\mathbb{D}}, {\mathcal{X}})$
denote the space of bounded and continuous functions from ${\mathbb{D}}$ to ${\mathcal{X}}$, respectively. As usual, we equip $B({\mathbb{D}}, {\mathcal{X}})$ with the sup-norm and we set $C^{0}_{b}({\mathbb{D}}, {\mathcal{X}})
:= C^{0}({\mathbb{D}}, {\mathcal{X}})\cap B({\mathbb{D}}, {\mathcal{X}})$.   If $\mathcal{Y}$ is also a normed space, than $\mathcal{L}(\mathcal{X},\mathcal{Y})$ denotes the space of bounded linear operators from $\mathcal{X}$ to $\mathcal{Y}$ equipped with the usual operator norm.

 Let $\Omega$ be an open 
subset of ${\mathbb{R}}^{n}$.  Then $\Omega^- := \mathbb{R}^n\setminus \overline \Omega$
denotes the exterior of $\Omega$.
Let $m \in \mathbb{N}$. 
The space of $m$-times continuously 
differentiable real-valued functions on $\Omega$ is denoted by 
$C^{m}(\Omega)$. 
Let $f\in  C^{m}(\Omega)  $. Then   $Df$ denotes the Jacobian matrix of $f$. 
%Let  $\eta:=
%(\eta_{1},\dots ,\eta_{n})\in{\mathbb{N}}^{n}$, $|\eta |:=
%\eta_{1}+\dots +\eta_{n}  $. Then $D^{\eta} f$ denotes
%$\frac{\partial^{|\eta|}f}{\partial
%x_{1}^{\eta_{1}}\dots\partial x_{n}^{\eta_{n}}}$.    
The
subspace of $C^{m}(\Omega )$ of those functions $f$ whose derivatives $D^{\eta }f$ of
order $|\eta |\leq m$ can be extended with continuity to 
$\overline\Omega$  is  denoted $C^{m}(
\overline\Omega )$.
The subspace of $C^{m}(\overline\Omega ) $  whose derivatives up to the $m$ order are bounded is denoted 
$C^{m}_{b}(\overline\Omega ) $. As is well known,  $C^{m}_{b}(\overline\Omega )$ 
equipped with the norm
$\|f\|_{C^{m}_{b}(\overline\Omega )}:= \sum
_{|\eta |\leq m}\sup_{\overline\Omega}|D^\eta f|$ is a Banach space.  The symbol $\nu_{\Omega}$ denotes 
 the outward unit normal field to $\partial \Omega$, where it exists.

Let $m \in \mathbb{N}$ and $\alpha\in\mathopen]0,1[$. 
For the definition of open subsets of ${\mathbb{R}}^{n}$ of class $C^m$ and
$C^{m,\alpha}$,
and of the Schauder spaces $C^{m,\alpha}(\overline\Omega)$ and $C^{m, \alpha}(\partial\Omega)$, 
 we refer  to Gilbarg and Trudinger~\cite[pp. 52, 95]{GiTr83}.

 \subsection{Parabolic Schauder spaces}\label{sec:spaces}
 
  Let $\alpha, \beta\in \mathopen]0,1[$,  $T \in \mathopen ]0,+\infty[$ and $\mathbb{D} \subseteq \mathbb{R}^n$.
 Then $C^{\alpha;\beta}([0,T] \times\mathbb{D})$
denotes the space of bounded continuous functions $u$ from $[0,T] \times \mathbb{D}$ to ${\mathbb{R}}$ such that
\begin{align*}
\|u\|_{C^{\alpha;\beta}([0,T] \times\mathbb{D})}
:= &\sup_{ [0,T] \times\mathbb{D} }|u|
+\sup_{\substack{t_{1},t_{2}\in  [0,T]\\ t_{1}\neq t_{2}}   }\sup_{x \in \mathbb{D}}
\frac{|u(t_{1},x)  -u(t_{2},x)  | }{|t_{1}-t_{2}|^{\alpha}}\\ \nonumber
&+\sup_{t\in  [0,T] } 
\sup_{\substack{ x_1,x_2 \in \mathbb{D}\\ x_{1}\neq x_{2}}}
\frac{|u(t,x_1)  -u(t,x_2)  | }{|x_{1}-x_{2}|^{\beta}}<+\infty.
\end{align*}
%We also denote by $C^{0;\alpha}([0,T] \times \mathbb{D})$
%the space of bounded continuous functions $u$ from $[0,T] \times \mathbb{D}$ to ${\mathbb{R}}$ such that
%\begin{align*}
%\|u\|_{C^{0;\alpha}([0,T] \times \mathbb{D})}
%:= &\sup_{  [0,T] \times \mathbb{D} }|u|
%+\sup_{t\in  [0,T] } 
%\sup_{\substack{ x_1,x_2 \in \mathbb{D}\\ x_{1}\neq x_{2}}}
%\frac{|u(t,x_1)  -u(t,x_2)  | }{|x_{1}-x_{2}|^{\alpha}}<+\infty.
%\end{align*}
%
%
Let now $\Omega$ be an open subset of $\mathbb{R}^n$. Then $C^{\frac{1+\alpha}{2};1+\alpha}([0,T] \times \overline{\Omega})$ denotes the space of  bounded continuous functions $u$ from $[0,T] \times \overline{\Omega}$ to ${\mathbb{R}}$ which are continuously differentiable with respect to the space variables and such that 
\begin{align*}
\|u\|_{C^{\frac{1+\alpha}{2};1+\alpha}([0,T] \times \overline{\Omega})}
:= &\sup_{  [0,T] \times  \overline{\Omega}}|u| + \sum_{i=1}^n\|\partial_{x_i}u\|_{C^{\frac{\alpha}{2};\alpha}([0,T] \times \overline{\Omega})}\\
&+\sup_{\substack{t_{1},t_{2}\in  [0,T]\\ t_{1}\neq t_{2}}   }\sup_{x \in \overline{\Omega}}
\frac{|u(t_{1},x)  -u(t_{2},x)  | }{|t_{1}-t_{2}|^{\frac{1+\alpha}{2}}}<+\infty.
\end{align*}
If $\Omega$ is of class $C^{1,\alpha}$, we can use the  local parametrization of $\partial\Omega$ to 
define the space 
$C^{\frac{1+\alpha}{2};1+\alpha}([0,T] \times \partial\Omega)$ in the natural way.
Similarly, we can  define the spaces $C^{m,\alpha}(M)$ and $C^{\frac{m+\alpha}{2};m+\alpha}([0,T] \times M)$, 
$m=0,1$ on a 
 manifold $M$ of class $C^{m,\alpha}$ imbedded in 
$\mathbb{R}^n$  (for details, see Appendix \ref{appA}).

 Finally, we use the subscript $0$ to denote a subspace consisting of functions that are zero at  $t=0$. For example,
\[
C_0^{\alpha;\beta}([0,T] \times \mathbb{D}) := \left\{u \in C^{\alpha;\beta}([0,T] \times  \mathbb{D}) : 
u(0,x)=0 \quad   \forall x \in \mathbb{D}\right\}.
\]
Then the spaces  $C_0^{\frac{1+\alpha}{2};1+\alpha}([0,T] \times \overline{\Omega})$,   $C_0^{\frac{1+\alpha}{2};1+\alpha}([0,T] \times \partial \Omega)$, 
and  $C_0^{\frac{m+\alpha}{2};m+\alpha}([0,T] \times M)$ are similarly defined. 
 
 For a comprehensive introduction to parabolic 
Schauder spaces we refer the reader to classical monographs on the field, for example   Lady\v{z}enskaja, Solonnikov and Ural'ceva 
\cite[Ch. 1]{LaSoUr68}  (see also  \cite{LaLu17, LaLu19}).

\subsection{The class of diffeomorphisms $\mathcal{A}_{\partial\Omega}$}\label{sec:diffeomors}

We now introduce the class of diffeomorphisms that we use to model the domains' shape. 
\begin{definition}\label{Asets}
Let 
$\Omega$ be a bounded open connected subset of $\mathbb{R}^n$ of class $C^{1}$. We denote by  $\mathcal{A}_{\partial \Omega}$  the set consisting of the functions of class $C^1(\partial\Omega, \mathbb{R}^{n})$ that are injective and whose differential is injective at all points  of $\partial\Omega$ and, similarly, we denote by $\mathcal{A}_{\overline{\Omega}}$  the set of functions of  $C^1(\overline{\Omega}, \mathbb{R}^n)$ that are injective and whose differential is injective at all points  of  $ {\overline{\Omega}}$. 
\end{definition}
\begin{figure}
\center
\includegraphics[height=2.6cm]{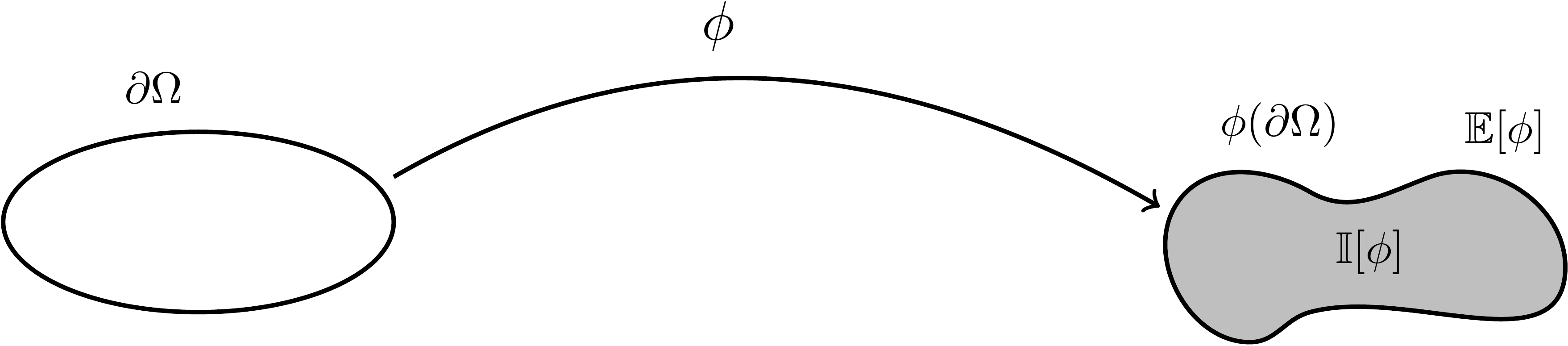}
%\begin{tikzpicture}[scale=0.95]
%
% \draw[thick] (-4,1.1) ellipse (1.3cm and 0.6cm);
% \node[scale=0.8, below left] at (-4, 2.2) {$\partial\Omega$};
%
%
%\node[scale=0.8, below left] at (3.7, 2) {$\phi(\partial\Omega)$};
%
%
%\draw [thick, fill=gray!50!] (3,0.3) to [out= 180, in=240] (2.5, 1.2) to [out=60,in=150] (3.4, 1.3) to [out=330,in=200] (4.2, 1.4)
%to [out=20, in=90] (5.1, 0.8) to [out=270,in=15] (3.4, 0.5) to [out=195,in=0](3,0.3);
%\node [scale=0.8] at (3.8, 0.9) {$\mathbb{I}[\phi]$};
%\node [scale=0.8] at (4.7, 1.7) {$\mathbb{E}[\phi]$};
%
%  
% \draw [thick, ->] (-2.7,1.4) to [out= 30, in=150] (2.4, 1.2);
%  \node   [below left] at (-0.3, 2.7) {$\phi$};
%\end{tikzpicture}
\caption{{\it The diffeomorphism  $\phi$ and the $\phi$-dependent sets $\mathbb{I}[\phi]$,  $\mathbb{E}[\phi]$  and  $\phi(\partial\Omega)$.}}\label{fig:phi}
\end{figure} 

We can verify that $\mathcal{A}_{\partial \Omega}$ and $\mathcal{A}_{\overline{\Omega}}$ are open 
in $ C^1(\partial\Omega, \mathbb{R}^{n})$ and $C^1({\overline{\Omega}, \mathbb{R}^n})$, respectively
(see,  e.g., Lanza de Cristoforis and Rossi \cite[Lem.~2.2, p.~197]{LaRo08}  
and \cite[Lem.~2.5, p.~143]{LaRo04}). If $\Omega$ has connected exterior $\Omega^-$, then $\mathbb{R}^n\setminus\partial\Omega$ has two open connected components and thus  the Jordan-Leray separation theorem ensures that 
$\mathbb{R}^n \setminus \phi(\partial\Omega)$ has exactly two open connected components  for all $\phi \in \mathcal{A}_{\partial \Omega}$ {(see, e.g.,   Deimling \cite[Thm 5.2, p.26]{De85}).  One of these open connected components is bounded, and we denote it by $\mathbb{I}[\phi]$, the letter ``$\mathbb{I}$'' standing for ``interior.'' The other one is unbounded, and we denote it by $\mathbb{E}[\phi]$, the letter ``$\mathbb{E}$'' standing for ``exterior.'' (See Figure \ref{fig:phi}.) 

We need to recall two technical lemmas of Lanza de Cristoforis and Rossi \cite[\S 2]{LaRo08}, which show that a diffeomorphism on $\partial\Omega$ can be extended in a neighborhood of $\partial\Omega$ by means of a real analytic extension operator.
\begin{lemma}\label{ext1}
Let $\alpha \in \mathopen]0,1[$.
 Let $\Omega$ be a bounded open connected subset  of   $\mathbb{R}^n$  of class $C^{1,\alpha}$  such that $\Omega^-$ is connected. 
 There exists $\omega \in C^{1,\alpha}(\partial\Omega, \mathbb{R}^n)$ such that $|\omega|=1$  and 
$\omega \cdot \nu_\Omega>1/2$ on $\partial \Omega$. Moreover, the 
following statements hold.
\begin{itemize}
\item[(i)] There exists $\delta_\Omega \in \mathopen]0,+\infty[$ such that the sets
\begin{align*}
&\Omega_{\omega,\delta} := \{x+s\omega(x) : x \in \partial\Omega,\, s \in \mathopen]-\delta,\delta[\},\\
&\Omega_{\omega,\delta}^+ := \{x+s\omega(x) : x \in \partial\Omega,\, s \in \mathopen]-\delta,0[\},\\
&\Omega_{\omega,\delta}^- := \{x+s\omega(x) : x \in \partial\Omega,\, s \in \mathopen]0,\delta[\}
\end{align*} 
are connected and of class $C^{1,\alpha}$. They have boundaries 
\begin{align*}
&\partial\Omega_{\omega,\delta} = \{x+s\omega(x) : x \in \partial\Omega,\, s \in \{-\delta,\delta\}\},\\
&\partial\Omega_{\omega,\delta}^+ = \{x+s\omega(x) : x \in \partial\Omega,\, s \in \{-\delta,0\}\},\\
&\partial\Omega_{\omega,\delta}^- = \{x+s\omega(x) : x \in \partial\Omega,\, s \in \{0,\delta\}\}.
\end{align*} 
and we have $\Omega_{\omega,\delta}^+\subseteq \Omega$ and $ \Omega_{\omega,\delta}^-\subseteq \Omega^-$ for all $\delta \in\mathopen ]0,\delta_\Omega[$.
\item[(ii)] Let $\delta \in \mathopen]0,\delta_\Omega[$. If $\Phi \in \mathcal{A}_{\overline{\Omega_{\omega,\delta}}}$, then $\phi := \Phi_{|\partial\Omega} \in \mathcal{A}_{\partial\Omega}$.
\item[(iii)] If  $\delta \in\mathopen ]0,\delta_\Omega[$, then the set
	\[
 	    \mathcal{A}'_{\overline{\Omega_{\omega,\delta}}} := \{\Phi \in  \mathcal{A}_{\overline{\Omega_{\omega,\delta}}} : 
		\Phi(\Omega_{\omega,\delta}^+) \subseteq \mathbb{I}[\Phi_{|\partial\Omega}]\}
	\]
	is open in $\mathcal{A}_{\overline{\Omega_{\omega,\delta}}}$ and $\Phi(\Omega_{\omega,\delta}^-) \subseteq \mathbb{E}[\Phi_{|\partial\Omega}]$ for all
	$\Phi \in  \mathcal{A}'_{\overline{\Omega_{\omega,\delta}}} $.
\item[(iv)]  If  $\delta \in \mathopen]0,\delta_\Omega[$ and $\Phi \in  C^{1,\alpha}(\overline{\Omega_{\omega,\delta}}, \mathbb{R}^n)\cap\mathcal{A}'_{\overline{\Omega_{\omega,\delta}}}$,
	 then both  $\Phi(\Omega_{\omega,\delta}^+) $ and $\Phi(\Omega_{\omega,\delta}^-)$ are open sets of class $C^{1,\alpha}$, and 
	\[
	  \partial \Phi(\Omega_{\omega,\delta}^+) = \Phi(\partial\Omega_{\omega,\delta}^+), \quad 
 	\partial \Phi(\Omega_{\omega,\delta}^-) = \Phi(\partial\Omega_{\omega,\delta}^-).
	\]
\end{itemize}
\end{lemma}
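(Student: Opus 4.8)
The plan is to obtain the collar $\Omega_{\omega,\delta}$ by a tubular-neighbourhood construction carried out in the H\"older category around a \emph{regularized} version of the outer normal, and then to read off the statements on $\mathbb{I}[\cdot]$, $\mathbb{E}[\cdot]$ and on $\mathcal{A}'_{\overline{\Omega_{\omega,\delta}}}$ from invariance of domain and from topological degree theory; this is the content of \cite[\S2]{LaRo08}, which I sketch here. For the first assertion, note that since $\Omega$ is only of class $C^{1,\alpha}$ its outer unit normal $\nu_\Omega$ belongs merely to $C^{0,\alpha}(\partial\Omega,\mathbb{R}^n)$, so $\omega=\nu_\Omega$ is not allowed. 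Instead I would extend $\nu_\Omega$ to a continuous map on $\mathbb{R}^n$ and mollify it (or glue the explicit normals of finitely many $C^{1,\alpha}$ graph charts of $\partial\Omega$ by a $C^\infty$ partition of unity) to get a field $V$, smooth in the ambient variables, with $\sup_{\partial\Omega}|V-\nu_\Omega|<1/4$; then $V_{|\partial\Omega}\in C^{1,\alpha}(\partial\Omega,\mathbb{R}^n)$, $|V|\ge 3/4$ on $\partial\Omega$, and $\omega:=V/|V|$ satisfies $|\omega|=1$, $\omega\cdot\nu_\Omega=(1-(\nu_\Omega-V)\cdot\nu_\Omega)/|V|\ge(3/4)/(5/4)>1/2$, and $\omega\in C^{1,\alpha}(\partial\Omega,\mathbb{R}^n)$.

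For (i) I would study the $C^{1,\alpha}$ map $\Psi(x,s):=x+s\,\omega(x)$ on $\partial\Omega\times\mathbb{R}$. Its differential at $(x,0)$ maps the tangent space $T_x\partial\Omega$ onto itself and the $s$-direction onto $\omega(x)$, and since $\omega(x)\cdot\nu_\Omega(x)>1/2$ the vector $\omega(x)$ is transverse to $T_x\partial\Omega$, so $D\Psi(x,0)$ is invertible. By compactness of $\partial\Omega$ together with a standard injectivity-for-small-parameter argument, there is $\delta_\Omega>0$ such that $\Psi$ restricts to a $C^{1,\alpha}$ diffeomorphism of $\partial\Omega\times\mathopen]-\delta_\Omega,\delta_\Omega[$ onto an open neighbourhood of $\partial\Omega$. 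For $\delta\in\mathopen]0,\delta_\Omega[$ the sets $\Omega_{\omega,\delta}$, $\Omega_{\omega,\delta}^+$, $\Omega_{\omega,\delta}^-$ are then the $\Psi$-images of $\partial\Omega\times\mathopen]-\delta,\delta[$, $\partial\Omega\times\mathopen]-\delta,0[$, $\partial\Omega\times\mathopen]0,\delta[$; they are connected (using that $\partial\Omega$ is connected, which follows from $\Omega$ being connected with connected exterior), they are of class $C^{1,\alpha}$, and their boundaries are as claimed, all because $\Psi$ is a $C^{1,\alpha}$ diffeomorphism between the corresponding manifolds-with-boundary. Finally, $\Omega_{\omega,\delta}^\pm$ is connected and disjoint from $\partial\Omega=\Psi(\partial\Omega\times\{0\})$, hence contained in $\Omega$ or in $\Omega^-$; a look at a single local graph of $\partial\Omega$, using $\omega\cdot\nu_\Omega>0$, shows that after shrinking $\delta_\Omega$ once more (uniformly in $x$) one has $\Omega_{\omega,\delta}^+\subseteq\Omega$ and $\Omega_{\omega,\delta}^-\subseteq\Omega^-$.

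Item (ii) is then immediate: $\partial\Omega\subseteq\Omega_{\omega,\delta}$, the restriction $\phi:=\Phi_{|\partial\Omega}$ is $C^1$ and injective, and $D\phi(x)=D\Phi(x)_{|T_x\partial\Omega}$ is injective because $D\Phi(x)$ is. For (iii), invariance of domain makes every $\Phi\in\mathcal{A}_{\overline{\Omega_{\omega,\delta}}}$ a homeomorphism of the open collar onto an open set, so $\Phi(\Omega_{\omega,\delta}^+)$ and $\Phi(\Omega_{\omega,\delta}^-)$ are open, connected, and disjoint from $\phi(\partial\Omega)=\Phi(\partial\Omega)$, hence each is contained in $\mathbb{I}[\phi]$ or in $\mathbb{E}[\phi]$; they cannot lie on the same side, for then $\Phi(\Omega_{\omega,\delta})$ would be an open neighbourhood of $\phi(\partial\Omega)=\partial\mathbb{I}[\phi]=\partial\mathbb{E}[\phi]$ contained in $\overline{\mathbb{I}[\phi]}$, which is absurd; so $\Phi(\Omega_{\omega,\delta}^+)\subseteq\mathbb{I}[\phi]$ forces $\Phi(\Omega_{\omega,\delta}^-)\subseteq\mathbb{E}[\phi]$. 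The openness of $\mathcal{A}'_{\overline{\Omega_{\omega,\delta}}}$ is, I expect, the main obstacle. Fixing $p_0\in\Omega_{\omega,\delta}^+$, one has $\Phi\in\mathcal{A}'_{\overline{\Omega_{\omega,\delta}}}$ if and only if $\Phi(p_0)\in\mathbb{I}[\Phi_{|\partial\Omega}]$, because $\Phi(\Omega_{\omega,\delta}^+)$ is connected and lies in $\mathbb{I}[\Phi_{|\partial\Omega}]$ precisely when it meets it; it then suffices to know that the bounded component $\mathbb{I}[\psi]$ of $\mathbb{R}^n\setminus\psi(\partial\Omega)$ depends continuously on $\psi\in\mathcal{A}_{\partial\Omega}$, in the sense that every compact subset of $\mathbb{I}[\psi_0]$ stays inside $\mathbb{I}[\psi]$ once $\psi$ is close to $\psi_0$. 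This last fact — together with the whole of (iii) — is proved in \cite[\S2]{LaRo08} through the homotopy invariance of the topological degree, the point being that, $\Phi$ being injective, $\deg(\Phi,\Omega_{\omega,\delta}^+,\cdot)$ equals $\pm1$ on $\Phi(\Omega_{\omega,\delta}^+)$ and $0$ off $\Phi(\overline{\Omega_{\omega,\delta}^+})$, and is stable along the segment joining two $C^1$-close diffeomorphisms.

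Finally, for (iv) the extra hypothesis $\Phi\in C^{1,\alpha}(\overline{\Omega_{\omega,\delta}},\mathbb{R}^n)$ upgrades, via the H\"older inverse function theorem, the homeomorphism of the collar onto its image to a $C^{1,\alpha}$ diffeomorphism; hence $\Phi$ maps the $C^{1,\alpha}$ manifolds-with-boundary $\overline{\Omega_{\omega,\delta}^\pm}$ onto $C^{1,\alpha}$ open sets, and since $\Phi$ is a homeomorphism, $\partial\Phi(\Omega_{\omega,\delta}^\pm)=\Phi(\overline{\Omega_{\omega,\delta}^\pm})\setminus\Phi(\Omega_{\omega,\delta}^\pm)=\Phi(\partial\Omega_{\omega,\delta}^\pm)$, as asserted.
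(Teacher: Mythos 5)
The paper does not prove this lemma at all: it is recalled verbatim from Lanza de Cristoforis and Rossi \cite[\S 2]{LaRo08}, so there is no in-paper argument to compare against. Your reconstruction follows the same strategy as that reference (a regularized normal field $\omega$, the tubular map $(x,s)\mapsto x+s\omega(x)$ made into a $C^{1,\alpha}$ diffeomorphism for small $\delta$, and topological degree for the openness of $\mathcal{A}'_{\overline{\Omega_{\omega,\delta}}}$) and is sound; the two points you leave as ``standard'' --- uniform injectivity of the tubular map for small $\delta$, and the stability of $\deg(\psi,\cdot)$ under $C^0$-small perturbations of $\psi$ needed to make ``$\Phi(p_0)\in\mathbb{I}[\Phi_{|\partial\Omega}]$'' an open condition --- are exactly the delicate steps, and deferring them to \cite{LaRo08} is consistent with what the paper itself does.
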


\begin{lemma}\label{ext2}
Let  $\Omega$, $\omega$, $\delta_\Omega$ be as in Lemma \ref{ext1}. Let $\phi_0\in  C^{1,\alpha}(\partial\Omega, \mathbb{R}^n)\cap\mathcal{A}_{\partial\Omega}$. 
 Then the following statements hold.
 \begin{itemize}
\item[(i)] There exists $\delta_0 \in \mathopen]0,\delta_\Omega[$ and $\Phi_0 \in  C^{1,\alpha}(\overline{\Omega_{\omega,\delta_0}}, \mathbb{R}^n) \cap\mathcal{A}'_{\overline{\Omega_{\omega,\delta_0}}}  $ such that $\phi_0 = {\Phi_0}_{|\partial\Omega}$.
\item[(ii)] Let  $\delta_0 $,  $\Phi_0$ be as in statement $(i)$. Then there exist an open neighborhood $\mathcal{W}_0$ of $\phi_0$ in $\ C^{1,\alpha}(\partial\Omega, \mathbb{R}^n)\cap\mathcal{A}_{\partial\Omega} $, and a real analytic extension operator 
	 $\mathbf{E}$ from $C^{1,\alpha}(\partial\Omega, \mathbb{R}^n)$ to $C^{1,\alpha}(\overline{\Omega_{\omega, \delta_0}}, \mathbb{R}^n)$ which 
	maps  $\mathcal{W}_0$ to  $C^{1,\alpha}(\overline{\Omega_{\omega,\delta_0}}, \mathbb{R}^n) \cap\mathcal{A}'_{\overline{\Omega_{\omega,\delta_0}}} $ 
	and such that  $\mathbf{E}[\phi_0] = \Phi_0$ and $\mathbf{E}[\phi]_{|\partial\Omega} = \phi$, for all $\phi \in \mathcal{W}_0$.  
\end{itemize}
\end{lemma}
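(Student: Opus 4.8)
\emph{Proof strategy.} The statement is quoted from \cite[\S 2]{LaRo08}, so one may simply refer there; for completeness let me indicate how the argument runs. The plan is to work entirely in the collar coordinates supplied by Lemma~\ref{ext1}: for $\delta\in\mathopen]0,\delta_\Omega[$ the map $F(x,s):=x+s\,\omega(x)$ is a $C^{1,\alpha}$ diffeomorphism of $\partial\Omega\times[-\delta,\delta]$ onto $\overline{\Omega_{\omega,\delta}}$ (a local $C^{1,\alpha}$ diffeomorphism because $\omega\in C^{1,\alpha}(\partial\Omega,\mathbb{R}^n)$ with $\omega\cdot\nu_\Omega>1/2$, and injective for $\delta$ small, as in Lemma~\ref{ext1}(i)), and it carries $\partial\Omega\times\{0\}$ onto $\partial\Omega$, $\partial\Omega\times\mathopen]-\delta,0[$ onto $\Omega^+_{\omega,\delta}$, and $\partial\Omega\times\mathopen]0,\delta[$ onto $\Omega^-_{\omega,\delta}$. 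In these coordinates I will look for extensions of the form $\Phi(F(x,s))=\phi(x)+s\,G(x)$, so everything reduces to choosing a good ``transverse direction'' $G\in C^{1,\alpha}(\partial\Omega,\mathbb{R}^n)$. I expect the only genuine difficulty to lie in part~(i): the obvious choice $G=\nu_{\phi_0}\circ\phi_0$ is not admissible, because the exterior unit normal to the $C^{1,\alpha}$ hypersurface $\phi_0(\partial\Omega)$ is only of class $C^{0,\alpha}$, and it must be replaced by a $C^{1,\alpha}$ field without losing transversality or the correct orientation. Once part~(i) is settled, part~(ii) follows by a soft affine-correction argument together with the openness statements in Lemma~\ref{ext1}.

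\emph{Part (i).} Since $\phi_0\in\mathcal{A}_{\partial\Omega}\cap C^{1,\alpha}(\partial\Omega,\mathbb{R}^n)$ and $\partial\Omega$ is compact, $\phi_0$ is a $C^{1,\alpha}$ embedding, so for each $x\in\partial\Omega$ the differential $d\phi_0(x)$ maps $T_x\partial\Omega$ isomorphically onto the tangent hyperplane to $\phi_0(\partial\Omega)$ at $\phi_0(x)$, the orthogonal complement of $\nu_{\phi_0}(\phi_0(x))$. The requirement ``$V(x)\cdot\nu_{\phi_0}(\phi_0(x))>0$ for every $x\in\partial\Omega$'' is open in $V$ with respect to the $C^0$ norm, is satisfied by $V=\nu_{\phi_0}\circ\phi_0$, and implies both that $V(x)$ is transverse to the tangent hyperplane at $\phi_0(x)$ and that $V(x)$ points to the exterior side of $\phi_0(\partial\Omega)$; hence, by density of $C^{1,\alpha}(\partial\Omega,\mathbb{R}^n)$ in $C^0(\partial\Omega,\mathbb{R}^n)$ and compactness of $\partial\Omega$, I can fix $V\in C^{1,\alpha}(\partial\Omega,\mathbb{R}^n)$ with $V(x)\cdot\nu_{\phi_0}(\phi_0(x))>0$ for all $x$. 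Setting $\Phi_0:=\bigl((x,s)\mapsto\phi_0(x)+s\,V(x)\bigr)\circ F^{(-1)}$ on $\overline{\Omega_{\omega,\delta_0}}$ gives $\Phi_0\in C^{1,\alpha}(\overline{\Omega_{\omega,\delta_0}},\mathbb{R}^n)$ (composition of $C^{1,\alpha}$ maps) with ${\Phi_0}_{|\partial\Omega}=\phi_0$. Its differential at $s=0$ is $(\xi,t)\mapsto d\phi_0(x)\xi+t\,V(x)$, invertible by the choice of $V$; by uniform continuity and compactness it remains invertible for $|s|\le\delta_0$ once $\delta_0$ is small, so $\Phi_0$ is an immersion. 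Because ${\Phi_0}_{|\partial\Omega}=\phi_0$ is injective, and an immersion that is injective on a compact set is injective on a neighbourhood of it, after shrinking $\delta_0$ further $\Phi_0$ is injective on $\overline{\Omega_{\omega,\delta_0}}$, i.e.\ $\Phi_0\in\mathcal{A}_{\overline{\Omega_{\omega,\delta_0}}}$. Finally, since $V(x)$ points to the exterior side of $\phi_0(\partial\Omega)$, a local graph representation of the hypersurface shows that for $\delta_0$ small and $s\in\mathopen]-\delta_0,0[$ the point $\phi_0(x)+s\,V(x)$ lies on the interior side, hence in the bounded component $\mathbb{I}[\phi_0]$; therefore $\Phi_0(\Omega^+_{\omega,\delta_0})\subseteq\mathbb{I}[\phi_0]$ and $\Phi_0\in\mathcal{A}'_{\overline{\Omega_{\omega,\delta_0}}}$. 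This proves~(i).

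\emph{Part (ii).} Keep $\delta_0$ and $\Phi_0$ from part~(i). Let $\pi\colon\partial\Omega\times[-\delta_0,\delta_0]\to\partial\Omega$ be the first projection and put $p:=\pi\circ F^{(-1)}\colon\overline{\Omega_{\omega,\delta_0}}\to\partial\Omega$, a map of class $C^{1,\alpha}$ which restricts to the identity on $\partial\Omega$. Then $\mathbf{L}[\psi]:=\psi\circ p$ defines a bounded linear operator $\mathbf{L}\colon C^{1,\alpha}(\partial\Omega,\mathbb{R}^n)\to C^{1,\alpha}(\overline{\Omega_{\omega,\delta_0}},\mathbb{R}^n)$ with $\mathbf{L}[\psi]_{|\partial\Omega}=\psi$ (boundedness of such composition operators is part of the material in Appendix~\ref{appA}). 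I now set $\mathbf{E}[\phi]:=\mathbf{L}[\phi]+\Phi_0-\mathbf{L}[\phi_0]$ for all $\phi\in C^{1,\alpha}(\partial\Omega,\mathbb{R}^n)$. As an affine map between Banach spaces, $\mathbf{E}$ is real analytic; by construction $\mathbf{E}[\phi_0]=\Phi_0$ and $\mathbf{E}[\phi]_{|\partial\Omega}=\phi+\phi_0-\phi_0=\phi$ for every $\phi$. Since $\mathcal{A}'_{\overline{\Omega_{\omega,\delta_0}}}$ is open in $C^1(\overline{\Omega_{\omega,\delta_0}},\mathbb{R}^n)$ — being open in $\mathcal{A}_{\overline{\Omega_{\omega,\delta_0}}}$ by Lemma~\ref{ext1}(iii), which is in turn open in $C^1(\overline{\Omega_{\omega,\delta_0}},\mathbb{R}^n)$ — and contains $\Phi_0=\mathbf{E}[\phi_0]$, the set $\mathcal{W}_0:=\{\phi\in C^{1,\alpha}(\partial\Omega,\mathbb{R}^n):\mathbf{E}[\phi]\in\mathcal{A}'_{\overline{\Omega_{\omega,\delta_0}}}\}$ is an open neighbourhood of $\phi_0$; moreover $\mathcal{W}_0\subseteq\mathcal{A}_{\partial\Omega}$ by Lemma~\ref{ext1}(ii), and by its very definition $\mathbf{E}$ maps $\mathcal{W}_0$ into $C^{1,\alpha}(\overline{\Omega_{\omega,\delta_0}},\mathbb{R}^n)\cap\mathcal{A}'_{\overline{\Omega_{\omega,\delta_0}}}$. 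This proves~(ii). The crux of the whole argument, as anticipated, is part~(i) — constructing a $C^{1,\alpha}$ extension of $\phi_0$ that is an actual diffeomorphism on a thin collar and respects the interior/exterior decomposition; part~(ii) is then mere bookkeeping with affine maps and openness.
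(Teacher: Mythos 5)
Your proof is correct, but note that the paper itself offers no proof of Lemma \ref{ext2}: it is recalled verbatim from Lanza de Cristoforis and Rossi \cite{LaRo08} (see also \cite{LaRo04}), so there is nothing internal to compare against line by line. Your reconstruction is essentially the argument of that reference: work in the collar coordinates $(x,s)\mapsto x+s\,\omega(x)$ of Lemma \ref{ext1}, extend by an expression affine in $\phi$, and let the openness statements of Lemma \ref{ext1} (ii)--(iii) do the rest. Two points deserve a word of care, though neither is a gap. First, the replacement of $\nu_{\phi_0}\circ\phi_0$ (only $C^{0,\alpha}$) by a nearby $C^{1,\alpha}$ field $V$ with $V\cdot(\nu_{\phi_0}\circ\phi_0)>0$ is exactly the right move, and your density claim is justified, e.g., by Tietze extension plus mollification followed by restriction to $\partial\Omega$. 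Second, the assertion that $\phi_0(x)+sV(x)\in\mathbb{I}[\phi_0]$ for $s\in\mathopen]-\delta_0,0[$ should be argued by combining the local graph picture (which fixes the side near $s=0$) with the injectivity of the tubular map $(x,s)\mapsto\phi_0(x)+sV(x)$, which guarantees that the segment never returns to $\phi_0(\partial\Omega)$ and hence stays in a single connected component; you have both ingredients in hand, so this is only a matter of assembling them explicitly. Your part (ii), with the $C^{1,\alpha}$ retraction $p=\pi\circ F^{(-1)}$ and the affine operator $\mathbf{E}[\phi]=\phi\circ p+\Phi_0-\phi_0\circ p$, is clean and gives real analyticity for free; defining $\mathcal{W}_0$ as the preimage of $\mathcal{A}'_{\overline{\Omega_{\omega,\delta_0}}}$ under $\mathbf{E}$ and invoking Lemma \ref{ext1} (ii) to see $\mathcal{W}_0\subseteq\mathcal{A}_{\partial\Omega}$ is exactly the intended bookkeeping.
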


Finally, we present in the following technical lemma two real analyticity results, one for a map related to the change of variables 
in integrals, and one for the pullback of the outer normal field.
For a proof we refer to Lanza de Cristoforis and Rossi \cite[p.~166]{LaRo04}
and to Lanza de Cristoforis \cite[Prop. 1]{La07}. Throughout the paper $\nu_{\phi}$ denotes the exterior unit normal field to $\partial \mathbb{I}[\phi]=\phi(\partial\Omega)$.
\begin{lemma}\label{rajacon}
Let  $\Omega$ be  as in Lemma \ref{ext1}.   Then the following statements hold.
\begin{itemize}
\item[(i)] For each $\phi \in C^{1,\alpha}(\partial\Omega, \mathbb{R}^{n})\cap\mathcal{A}_{\partial \Omega}$ there exists a unique  
$\tilde \sigma_n[\phi] \in C^{1,\alpha}(\partial\Omega)$ such that 
\[ 
  \int_{\phi(\partial\Omega)}f(s)\,d\sigma_s=  \int_{\partial\Omega}f \circ \phi(y)\tilde\sigma_n[\phi](y)\,d\sigma_y \qquad \forall f \in L^1(\phi(\partial\Omega)).
\]
Moreover,  $\tilde \sigma_n[\phi]>0$ and the map that takes $\phi$ to $\tilde \sigma_n[\phi]$  is real analytic from $C^{1,\alpha}(\partial\Omega, \mathbb{R}^n)\cap\mathcal{A}_{\partial \Omega}  $ to $ C^{0,\alpha}(\partial\Omega)$.
\item[(ii)] The map from $C^{1,\alpha}(\partial\Omega, \mathbb{R}^n)\cap\mathcal{A}_{\partial \Omega} $ to $ C^{0,\alpha}(\partial\Omega, \mathbb{R}^{n})$ that takes $\phi$ to $\nu_{\phi} \circ \phi$ is real analytic.
\end{itemize}
\end{lemma}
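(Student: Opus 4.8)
\medskip
\noindent\textbf{Proof proposal.} The plan is to reduce both assertions to explicit formulas in local coordinates and then to invoke the real analyticity of superposition operators on Schauder spaces, of the kind collected in Appendix~\ref{appA}. First I would fix a finite family of parametrizations $\gamma_{1},\dots,\gamma_{N}$, with $\gamma_{j}\in C^{1,\alpha}(\overline{U_{j}},\mathbb{R}^{n})$ and $U_{j}\subseteq\mathbb{R}^{n-1}$ bounded open and connected, such that $\partial\Omega=\bigcup_{j}\gamma_{j}(U_{j})$, together with a subordinate partition of unity $\{\psi_{j}\}$ of class $C^{0,\alpha}(\partial\Omega)$. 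For $\phi\in C^{1,\alpha}(\partial\Omega,\mathbb{R}^{n})$ the composition $\phi\circ\gamma_{j}$ is of class $C^{1,\alpha}$ on $\overline{U_{j}'}$ for slightly smaller sets $U_{j}'$ whose images still cover $\partial\Omega$, so its spatial Jacobian $D(\phi\circ\gamma_{j})$ is an $n\times(n-1)$ matrix with entries in $C^{0,\alpha}(\overline{U_{j}'})$; by the definition of $C^{1,\alpha}(\partial\Omega,\mathbb{R}^{n})$ through local parametrizations, $\phi\mapsto D(\phi\circ\gamma_{j})$ is linear and continuous, hence real analytic, from $C^{1,\alpha}(\partial\Omega,\mathbb{R}^{n})$ to $C^{0,\alpha}(\overline{U_{j}'},\mathbb{R}^{n\times(n-1)})$. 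I would then set $G_{j}[\phi]:=D(\phi\circ\gamma_{j})^{\top}D(\phi\circ\gamma_{j})$ (the first fundamental form of $\phi(\partial\Omega)$ in these coordinates) and let $N_{j}[\phi]$ be the generalized cross product of the columns of $D(\phi\circ\gamma_{j})$, i.e.\ the vector of signed maximal minors; then $N_{j}[\phi]$ is normal to $\phi(\partial\Omega)$ along $\phi\circ\gamma_{j}$ and $|N_{j}[\phi]|^{2}=\det G_{j}[\phi]$, and both $G_{j}[\phi]$ and $N_{j}[\phi]$ are fixed polynomial expressions in the entries of $D(\phi\circ\gamma_{j})$.

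For part (i) I would deduce the stated change-of-variables identity from the standard change of variables for surface integrals, with $\tilde\sigma_{n}[\phi]$ given in coordinates by $\tilde\sigma_{n}[\phi]\circ\gamma_{j}=g_{j}\sqrt{\det G_{j}[\phi]}$ on $U_{j}'$, where $g_{j}$ is the fixed, strictly positive $C^{0,\alpha}$ function $\bigl(\det(D\gamma_{j}^{\top}D\gamma_{j})\bigr)^{-1/2}$; these local definitions glue because the identity they produce is chart independent. Uniqueness among continuous densities is immediate, since $\phi$ is a homeomorphism onto $\phi(\partial\Omega)$ and therefore $f\mapsto f\circ\phi$ maps $C^{0}(\phi(\partial\Omega))$ onto $C^{0}(\partial\Omega)$; positivity holds because membership of $\phi$ in $\mathcal{A}_{\partial\Omega}$ forces $D(\phi\circ\gamma_{j})$ to have rank $n-1$, so $\det G_{j}[\phi]>0$; and the Hölder regularity of $\tilde\sigma_{n}[\phi]$ is read off this local formula.

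For part (ii) I would fix $\phi_{0}\in C^{1,\alpha}(\partial\Omega,\mathbb{R}^{n})\cap\mathcal{A}_{\partial\Omega}$ and work on a small neighborhood $\mathcal{U}$ of it. On $\mathcal{U}$, continuity prevents the unit vector $N_{j}[\phi]/|N_{j}[\phi]|$ from jumping sheet, so $\nu_{\phi}\circ\phi\circ\gamma_{j}=\epsilon_{j}\,N_{j}[\phi]\,(\det G_{j}[\phi])^{-1/2}$ on $U_{j}'$, with a sign $\epsilon_{j}\in\{-1,+1\}$ that is independent of $\phi\in\mathcal{U}$ and is pinned down by the requirement that $\nu_{\phi}$ point out of $\mathbb{I}[\phi]$. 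Thus, in each chart, both $\tilde\sigma_{n}[\phi]$ and $\nu_{\phi}\circ\phi$ are built from the real analytic map $\phi\mapsto D(\phi\circ\gamma_{j})$ by: polynomial operations, which are real analytic because $C^{0,\alpha}$ is a Banach algebra under pointwise multiplication; left composition with $t\mapsto t^{1/2}$ and $t\mapsto t^{-1/2}$ on $\mathopen]0,+\infty[$, which is a real analytic superposition operator on the open subset of $C^{0,\alpha}$ of functions with strictly positive values (this is where the results of Appendix~\ref{appA} enter); and multiplication by the fixed factors $g_{j}$ and $\epsilon_{j}$. Since $\mathcal{A}_{\partial\Omega}$ is open and $\partial\Omega$ is compact, after shrinking $\mathcal{U}$ I may assume that $\det G_{j}[\phi]$ is bounded below by a positive constant for all $\phi\in\mathcal{U}$ and all $j$, so that these superposition operators are indeed defined on $\mathcal{U}$. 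Finally, because the Schauder spaces on $\partial\Omega$ are built from the $\gamma_{j}$ and $\{\psi_{j}\}$ by fixed (hence real analytic) pullback and multiplication operations, reassembling the chartwise expressions would show that $\phi\mapsto\tilde\sigma_{n}[\phi]$ and $\phi\mapsto\nu_{\phi}\circ\phi$ are real analytic on $\mathcal{U}$ into $C^{0,\alpha}(\partial\Omega)$ and $C^{0,\alpha}(\partial\Omega,\mathbb{R}^{n})$ respectively; since $\phi_{0}$ is arbitrary and real analyticity is a local property, both statements follow.

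I expect the main obstacle to lie not in the differential geometry but in the functional-analytic bookkeeping: one must have at hand, in exactly the right form, the fact that left composition with a real analytic function of one real variable is a real analytic map between Schauder spaces, together with the openness of the domain of this superposition operator — which here reduces to the uniform positive lower bound for $\det G_{j}[\phi]$ near each $\phi_{0}$, itself a consequence of the compactness of $\partial\Omega$ and of the openness of $\mathcal{A}_{\partial\Omega}$. The remaining delicate (though routine) point is to organize the charts and the partition of unity so that the chartwise real analyticity genuinely assembles into real analyticity of the maps valued in the Schauder spaces over the manifold $\partial\Omega$.
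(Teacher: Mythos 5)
Your argument is correct and is essentially the proof given in the references the paper cites for this lemma (Lanza de Cristoforis and Rossi, and Lanza de Cristoforis): local parametrizations, the density written as $g_j\sqrt{\det G_j[\phi]}$, the normal as the normalized vector of maximal minors of $D(\phi\circ\gamma_j)$, and real analyticity obtained from the Banach algebra structure of $C^{0,\alpha}$ together with superposition with $t\mapsto t^{\pm 1/2}$ on functions bounded away from zero. One small remark: your local formula yields $\tilde\sigma_n[\phi]\in C^{0,\alpha}(\partial\Omega)$ rather than the $C^{1,\alpha}(\partial\Omega)$ written in the statement, which appears to be a typo in the paper, since $D(\phi\circ\gamma_j)$ is only $C^{0,\alpha}$ and the analyticity target is indeed $C^{0,\alpha}(\partial\Omega)$.
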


\subsection{Layer heat potentials} \label{sec:potentials}

In this section we collect some well-known facts on the  layer heat potentials. For proofs and detailed references  
we refer to    Lady\v{z}enskaja, Solonnikov and Ural'ceva 
\cite{LaSoUr68}  and \cite{LaLu19, Lu19}. In Theorem \ref{pdhp} we introduce the double layer potential and describe some of its main properties.

\begin{theorem}\label{pdhp} 
Let $\alpha \in \mathopen]0, 1[$. Let $T \in \mathopen]0, +\infty[$. Let $\Omega$ be a bounded open subset of 
$\mathbb{R}^n$ of class $C^{1,\alpha}$.  The double layer potential with density $\mu \in C^{\frac{\alpha}{2};\alpha}_0([0,T] \times \partial\Omega)$ is the function $w[\mu]$ from $[0,T] \times \mathbb{R}^n$ to $\mathbb{R}$ defined by
\begin{equation*}\label{pdhp1}
w[ \mu](t, x) := \int_{0}^t \int_{\partial \Omega} \frac{\partial }{\partial \nu_{\Omega}(y)}
S_n (t-\tau, x-y)\mu(\tau, y) \,d\sigma_y d\tau \qquad \forall (t, x) \in [0,T]\times \mathbb{R}^n\,.
\end{equation*}
For the double layer potential the following statements hold.
\begin{itemize}
\item[(i)] $w[\mu]$ solves the heat equation in $[0,T] \times (\mathbb{R}^n \setminus \partial \Omega)$.
\item[(ii)] The 
restriction $w[\mu]_{|[0,T] \times \Omega}$ has a unique extension to a continuous function 
$w^+[\mu]$ from $[0,T] \times \overline{\Omega}$ to $\mathbb{R}$ and the restriction 
$w[\mu]_{|[0,T] \times \Omega^-}$ has a unique extension to a continuous function 
$w^-[\mu]$ from $[0,T] \times \overline{\Omega^-}$ to $\mathbb{R}$.
\item[(iii)] Unless $\mu=0$, $w[\mu]$ is not continuous on the boundary $[0,T] \times \partial\Omega$ and  we have the  jump formula
\begin{align*}\label{pdhp2}
w^{\pm}[\mu] = \mp \frac{1}{2}\mu+
w[\mu]  \qquad\mbox{ on } [0,T] \times \partial\Omega.
 \end{align*}
 In addition,  for the normal derivative of $w[\mu]$ we have
 \begin{align*}
 \frac{\partial}{\partial \nu_{\Omega}}w^{+}[\mu] =
  \frac{\partial}{\partial \nu_{\Omega}}w^{-}[ \mu] \qquad\mbox{ on }[0,T] \times \partial\Omega.
 \end{align*}
\item[(iv)]  
The map from 
$C^{\frac{1+\alpha}{2} ; 1+\alpha}_0([0,T] \times \partial\Omega)$ to $C^{\frac{1+\alpha}{2} ; 1+\alpha}_0\big([0,T] \times \overline{\Omega}\big)$ 
that takes $\mu$ to $w^+[\mu]$ is linear and continuous. If $R >0$ is such that $\overline{\Omega} \subseteq \mathbb{B}_n(0,R)$,  then 
the map from 
$C^{\frac{1+\alpha}{2} ; 1+\alpha}_0([0,T] \times \partial\Omega)$ to $C^{\frac{1+\alpha}{2} ; 1+\alpha}_0\big([0,T]\times (\overline{\mathbb{B}_n(0,R)} \setminus \Omega)\big)$ 
that takes $\mu$ to $w^-[\mu]_{|[0,T] \times (\overline{\mathbb{B}_n(0,R)} \setminus \Omega)} $ is linear and continuous.
\end{itemize}
\end{theorem}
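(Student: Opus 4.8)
The plan is to run the classical potential-theoretic arguments for parabolic layer potentials, adapted to the $C^{1,\alpha}$ category: (i) is differentiation under the integral sign, (ii)--(iii) rest on the weak singularity of the kernel together with an explicit identification of the jump, and (iv) is the quantitative Schauder estimate, whose hard core is the bound on the spatial gradient. For (i), I would fix a compact $K\subseteq\mathbb{R}^n\setminus\partial\Omega$, so that $d_0:=\mathrm{dist}(K,\partial\Omega)>0$; for $x\in K$, $y\in\partial\Omega$ and $0\le\tau\le t\le T$ the kernel $\frac{\partial}{\partial\nu_\Omega(y)}S_n(t-\tau,x-y)=\frac{(x-y)\cdot\nu_\Omega(y)}{2(t-\tau)}S_n(t-\tau,x-y)$ and each of its $(t,x)$-derivatives is bounded uniformly, since $|x-y|\ge d_0$ and the Gaussian factor $e^{-d_0^2/(4(t-\tau))}$ overwhelms every negative power of $t-\tau$. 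Hence these are uniformly integrable in $\tau$ up to $\tau=t$, $w[\mu]$ is $C^\infty$ on $[0,T]\times(\mathbb{R}^n\setminus\partial\Omega)$, and the heat operator may be taken inside. Since $(\partial_t-\Delta_x)S_n$ vanishes off the origin and $\partial_t-\Delta_x$ commutes with differentiation in $y$, the integrand is annihilated, and the only surviving term is the Leibniz contribution of $\partial_t$ at the endpoint $\tau=t$, namely $\int_{\partial\Omega}\frac{\partial}{\partial\nu_\Omega(y)}S_n(0,x-y)\mu(t,y)\,d\sigma_y$, which is zero because $S_n(0,z)=0$ for $z\ne0$.

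For (ii) and (iii) the key input is the $C^{1,\alpha}$-cancellation $|(x-y)\cdot\nu_\Omega(y)|\le C|x-y|^{1+\alpha}$ for $x,y\in\partial\Omega$. Together with the elementary bound $s^{-m}e^{-r^2/(4s)}\le C_{m,\beta}\,s^{-\beta}r^{2\beta-2m}$ (with $s=t-\tau$, $r=|x-y|$) it shows that, for a suitable $\beta\in\big(\tfrac{1-\alpha}{2},1\big)$, the kernel is dominated by $C\,s^{-\beta}r^{\alpha-1-n+2\beta}$, which is integrable in $\tau$ and weakly singular as a kernel on the $(n-1)$-manifold $\partial\Omega$, uniformly for $x$ near $\partial\Omega$; this already gives uniform boundedness of $w[\mu]$ near the boundary. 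To read off the traces and the jump I would split, for $x\to x_0\in\partial\Omega$, $\mu(\tau,y)=[\mu(\tau,y)-\mu(\tau,x_0)]+\mu(\tau,x_0)$ and then $\mu(\tau,x_0)=[\mu(\tau,x_0)-\mu(t,x_0)]+\mu(t,x_0)$: the first two brackets gain extra powers ($|y-x_0|^\alpha$ from the space-H\"older seminorm, $|t-\tau|^{\frac{1+\alpha}{2}}$ from the time-H\"older seminorm) which make them continuous up to $\partial\Omega$ with a single trace from both sides, while the last term is $\mu(t,x_0)$ times the constant-density potential $\int_0^t\int_{\partial\Omega}\frac{\partial}{\partial\nu_\Omega(y)}S_n(t-\tau,x-y)\,d\sigma_y\,d\tau$. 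By the divergence theorem and $\Delta_yS_n(s,x-y)=\partial_sS_n(s,x-y)$, the latter equals $\int_\Omega S_n(t,x-y)\,dy-\lim_{s\to0^+}\int_\Omega S_n(s,x-y)\,dy$; the first summand is continuous across $\partial\Omega$, while the approximate-identity limit equals $1$ in $\Omega$, $0$ in $\Omega^-$, and $\tfrac12$ on $\partial\Omega$. This yields the continuous extensions $w^\pm[\mu]$ and the jump formula $w^\pm[\mu]=\mp\tfrac12\mu+w[\mu]$. The equality of normal derivatives $\partial_{\nu_\Omega}w^+[\mu]=\partial_{\nu_\Omega}w^-[\mu]$ is obtained by the same method applied to the kernel $\frac{\partial^2}{\partial\nu_\Omega(x)\partial\nu_\Omega(y)}S_n(t-\tau,x-y)$, which now carries two $C^{1,\alpha}$-cancellations, one in $x$ and one in $y$, and is therefore weakly singular with no jump.

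Finally, for (iv): linearity is immediate, so it suffices to prove the estimate $\|w^\pm[\mu]\|_{C^{\frac{1+\alpha}{2};1+\alpha}}\le C\|\mu\|_{C^{\frac{1+\alpha}{2};1+\alpha}}$, reading $w^-[\mu]$ on $\overline{\mathbb{B}_n(0,R)}\setminus\Omega$ (the unbounded part of $\Omega^-$ being controlled by the Gaussian decay of $S_n$). The sup-bound and the $\tfrac{1+\alpha}{2}$-H\"older estimate in $t$ follow from the weakly singular estimates above, tracking carefully the parabolic scaling $|x|^2\sim t$; the membership of $w^\pm[\mu]$ in the subspace with zero trace at $t=0$ is automatic since $\mu(0,\cdot)=0$ and $w[\mu](0,\cdot)=0$ by the empty integral. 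The main obstacle — and the step where the optimal H\"older assumption is genuinely used — is the $C^{\frac{\alpha}{2};\alpha}$ bound on $\partial_{x_i}w^\pm[\mu]$: differentiating the double layer kernel once more in $x$ raises the order of singularity, and one regains integrability and H\"older continuity only after transferring the $x$-derivative onto a tangential derivative in $y$ along $\partial\Omega$ by an integration by parts (legitimate exactly because $\partial\Omega$ and $\mu(\tau,\cdot)$ are of class $C^{1,\alpha}$) and then balancing the $r^{1+\alpha}$ cancellation against the parabolic scaling so that the H\"older exponents match. Once this estimate is in hand, the continuity of the maps $\mu\mapsto w^\pm[\mu]$ is a consequence of their linearity.
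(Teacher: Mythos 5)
First, a point of reference: the paper does not prove Theorem \ref{pdhp} at all --- it is stated in Section \ref{sec:potentials} as a collection of ``well-known facts'' with proofs delegated to Lady\v{z}enskaja--Solonnikov--Ural'ceva and to \cite{LaLu17,LaLu19,Lu19}. So there is no in-paper argument to compare against; what can be assessed is whether your blind reconstruction would actually close. Your treatment of (i) and of the trace/jump part of (ii)--(iii) follows the classical route (differentiation under the integral away from $\partial\Omega$; the $C^{1,\alpha}$ cancellation $|(x-y)\cdot\nu_\Omega(y)|\le C|x-y|^{1+\alpha}$; freezing the density and reducing the constant-density potential via the divergence theorem to the Gaussian approximate identity applied to $\mathds{1}_\Omega$, which gives the $\mp\tfrac12$ jump) and is essentially sound, modulo two exponent slips: integrability over the $(n-1)$-dimensional boundary forces $\beta>1-\tfrac{\alpha}{2}$, not $\beta>\tfrac{1-\alpha}{2}$; and since the density in (ii)--(iii) lies in $C^{\frac{\alpha}{2};\alpha}_0$, the time bracket gains only $|t-\tau|^{\alpha/2}$, not $|t-\tau|^{\frac{1+\alpha}{2}}$ (which still suffices).

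The genuine gap is in your one-sentence disposal of $\partial_{\nu_\Omega}w^+[\mu]=\partial_{\nu_\Omega}w^-[\mu]$ and, to a lesser extent, of (iv). The doubly-differentiated kernel is
\[
\partial_{\nu_\Omega(x)}\partial_{\nu_\Omega(y)}S_n(s,x-y)=\Big[\frac{\nu_\Omega(x)\cdot\nu_\Omega(y)}{2s}-\frac{((x-y)\cdot\nu_\Omega(x))((x-y)\cdot\nu_\Omega(y))}{4s^2}\Big](4\pi s)^{-\frac n2}e^{-\frac{|x-y|^2}{4s}},
\]
and the first summand carries \emph{no} spatial cancellation: its integral over $y\in\partial\Omega$ is of order $s^{-3/2}$, which is not integrable in $s=t-\tau$ near $\tau=t$; even the second summand, with both cancellations, only yields $s^{\alpha-3/2}$, divergent for $\alpha\le\tfrac12$. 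So the kernel is \emph{not} weakly singular, the integral does not converge absolutely, and your stated mechanism (``two cancellations, therefore weakly singular, therefore no jump'') fails. The Lyapunov--Tauber-type identity for the heat double layer requires showing that the two one-sided limits, neither of which is given by a convergent integral of that kernel, have vanishing difference --- in the optimal $C^{1,\alpha}$ setting this is precisely the delicate content of \cite{LaLu17}. Likewise for (iv): the idea of trading the extra $x$-derivative for a tangential derivative of $\mu$ on $\partial\Omega$ is the correct one (it is the strategy of \cite{LaLu19}), but the closing estimates are the substance of that entire paper and are not supplied by your sketch. In short: (i)--(iii) minus the normal-derivative claim would survive with minor repairs; the normal-derivative equality and the $C^{\frac{1+\alpha}{2};1+\alpha}$ mapping property need the cited machinery, not the weak-singularity argument you propose.
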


In the following Theorem \ref{pshp} we introduce the single layer potential.

\begin{theorem}\label{pshp}
Let $\alpha \in \mathopen]0, 1[$. Let $T \in \mathopen]0, +\infty[$. Let $\Omega$ be a bounded open subset of 
$\mathbb{R}^n$ of class $C^{1,\alpha}$.   The single layer potential with density $\mu \in C^{\frac{\alpha}{2};\alpha}_0([0,T] \times\partial \Omega)$ is the function $v[\mu]$ from $[0,T]\times \mathbb{R}^n$ to $\mathbb{R}$  defined by
\begin{equation*}\label{pshp1}
v[\mu](t, x) := \int_{0}^t \int_{\partial \Omega}
S_n (t-\tau, x-y)\mu(\tau, y)\, d\sigma_y d\tau \qquad \forall (t, x) \in [0,T]\times \mathbb{R}^n\,.
\end{equation*}
For the single layer potential the following statements hold.
\begin{itemize}
\item[(i)] $v[\mu]$ solves the heat equation in $ [0,T]\times(\mathbb{R}^n \setminus \partial \Omega)$.
\item[(ii)]  $v[\mu]$ is continuous in 
$[0,T]\times\mathbb{R}^n$. 
\item[(iii)]  Let  $v^+[\mu]$ and $v^-[\mu]$ denote the restriction of 
$v[\mu]$ 
to $[0,T]\times\overline{\Omega}$ and to $[0,T]\times\overline{\Omega^-}$.
Then the following jump relations hold
\begin{align*} 
\frac{\partial}{\partial \nu_{\Omega} (x)} v^\pm[\mu](t,x) = 
\pm \frac{1}{2}\mu(t,x) 
+ \int_{0}^t \int_{\partial \Omega} \frac{\partial }{\partial \nu_{\Omega}(x)}
 S_n (t-\tau, x-y)\mu(\tau, y)\, d\sigma_y d\tau,
 \\ \nonumber
 \frac{\partial}{\partial x_r} v^\pm[\mu](t,x) = 
\pm \frac{1}{2}\mu(t,x)(\nu_{\Omega})_r(x) +
 \int_{0}^t \int_{\partial \Omega} \frac{\partial }{\partial x_r}
 S_n (t-\tau, x-y)\mu(\tau, y) \,d\sigma_y d\tau,
\end{align*}
for all $(t,x) \in [0,T]\times\partial \Omega$ and all $r \in \{1,\ldots,n\}$.
\item[(iv)]  The map from $C^{\frac{\alpha}{2} ; \alpha}_0([0,T] \times\partial \Omega)$ to 
$C^{\frac{1+\alpha}{2} ; 1+\alpha}_0\big([0,T]\times\overline{\Omega}\big)$ 
that takes $\mu$ to $v^+[\mu]$ is linear and continuous. If $R >0$ and $\overline{\Omega} \subseteq \mathbb{B}_n(0,R)$,  then the map from $C^{\frac{\alpha}{2} ; \alpha}_0([0,T] \times\partial \Omega)$ to 
$C^{\frac{1+\alpha}{2} ; 1+\alpha}_0\big([0,T]\times(\overline{\mathbb{B}_n(0,R)} \setminus \Omega)\big)$ 
that takes $\mu$ to $v^-[\mu]_{|[0,T]\times(\overline{\mathbb{B}_n(0,R)} \setminus \Omega)}$ is linear and continuous.

\end{itemize}
\end{theorem}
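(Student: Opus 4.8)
The plan is to reduce all four assertions to a handful of pointwise bounds on $S_n$ and $D_xS_n$, following the blueprint of the classical elliptic theory while keeping careful track of the space--time anisotropy; complete arguments along these lines are in Lady\v{z}enskaja--Solonnikov--Ural'ceva \cite{LaSoUr68} and in \cite{LaLu19,Lu19}. The only properties of the fundamental solution that are used are: $S_n$ depends on $(t,\tau,x,y)$ only through $t-\tau$ and $x-y$; it solves the heat equation on $\mathbb{R}^{1+n}\setminus\{(0,0)\}$; it and all its derivatives vanish to infinite order as the time variable $\downarrow 0$ while the space variable stays away from $0$; and the elementary surface bounds
\begin{equation*}
\int_{\partial\Omega}|S_n(s,x-y)|\,d\sigma_y\le C\,s^{-1/2},\qquad\int_{\partial\Omega}|D_xS_n(s,x-y)|\,d\sigma_y\le C\,s^{-1}\qquad(s\in\mathopen]0,T],\ x\in\mathbb{R}^n),
\end{equation*}
obtained by writing $\partial\Omega$ locally as a $C^{1,\alpha}$ graph and carrying out the Gaussian integral in the transversal direction and the surface integral in the $n-1$ tangential ones. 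When $x\in\partial\Omega$ the second bound improves to $C\,s^{\alpha/2-1}$ both for the double--layer kernel $\partial_{\nu_\Omega(y)}S_n(s,x-y)$ and for $\partial_{\nu_\Omega(x)}S_n(s,x-y)$, because the relevant scalar product of the differentiation direction with $x-y$ is then $O(|x-y|^{1+\alpha})$ by the $C^{1,\alpha}$ regularity of $\partial\Omega$.

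Statements (i) and (ii) are then quick. For (i), on a fixed compact subset $K$ of $\mathbb{R}^n\setminus\partial\Omega$ one has $|x-y|\ge c>0$ for $x\in K$, $y\in\partial\Omega$, so $(t,x)\mapsto S_n(t-\tau,x-y)$ and all its $(t,x)$-derivatives are uniformly bounded in $(\tau,y)\in[0,T]\times\partial\Omega$ (the infinite-order vanishing as $\tau\uparrow t$ disposing of the only possible difficulty); one differentiates under the integral sign, notes that $\partial_t$ produces the boundary term $\int_{\partial\Omega}S_n(0,x-y)\mu(t,y)\,d\sigma_y=0$, and uses $(\partial_t-\Delta_x)S_n=0$. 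For (ii), the first surface bound gives $\int_0^t\int_{\partial\Omega}|S_n(t-\tau,x-y)|\,d\sigma_y\,d\tau\le C\int_0^ts^{-1/2}\,ds\le 2C\sqrt T$ uniformly in $(t,x)$, so $v[\mu]$ is well defined and bounded; joint continuity on $[0,T]\times\mathbb{R}^n$, in particular across $[0,T]\times\partial\Omega$, follows from the usual splitting of the $\tau$-integral into $\mathopen]t-\varepsilon,t\mathclose[$ and $\mathopen]0,t-\varepsilon\mathclose[$, the first piece being uniformly small by the same bound and the integrand on the second jointly continuous; and $v[\mu](0,\cdot)=0$ by inspection.

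For (iii) I would run the classical jump--relation argument adapted to the heat kernel, taking the double--layer jump of Theorem \ref{pdhp}(iii) as given. Fix $x_0\in\partial\Omega$ and let $x=x_0+s\,\nu_\Omega(x_0)$, $s\to0^{\pm}$. Since $D_xS_n=-D_yS_n$, for $e=\nu_\Omega(x_0)$ the kernel of $e\cdot D_xv[\mu]$ differs from the double--layer kernel $\partial_{\nu_\Omega(y)}S_n(t-\tau,x-y)$ by $(\nu_\Omega(y)-\nu_\Omega(x_0))\cdot D_xS_n(t-\tau,x-y)$, which is bounded by $C|x-y|^{1+\alpha}(t-\tau)^{-1-n/2}e^{-c|x-y|^2/(t-\tau)}$ since $|\nu_\Omega(y)-\nu_\Omega(x_0)|\le C|x_0-y|^\alpha$; the corresponding operator therefore extends continuously across $[0,T]\times\partial\Omega$, so $\partial_{\nu_\Omega(x_0)}v[\mu]$ has the same jump as $-w[\mu]$ and Theorem \ref{pdhp}(iii) yields the first formula (the direct--value integral converging absolutely by the improved bound $s^{\alpha/2-1}$). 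For the spatial derivatives, write $e_r=(\nu_\Omega(x_0))_r\,\nu_\Omega(x_0)+\tau_r$ with $\tau_r$ tangent to $\partial\Omega$ at $x_0$: the normal part contributes $(\nu_\Omega)_r$ times the jump just found, whereas the tangential part $\tau_r\cdot D_xv[\mu]$ carries no jump --- its direct value converges and its one-sided limits coincide, both by the oddness cancellation of the Gaussian together with the $C^{1,\alpha}$ regularity --- and summing the two pieces gives the second formula.

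Statement (iv) is the substantive point, and I expect it to be the main obstacle: one must show that, on either side of $\partial\Omega$, the single layer potential gains a full derivative in space and a half derivative in time in the anisotropic parabolic H\"older scale, i.e. $\mu\in C^{\frac{\alpha}{2};\alpha}_0\Rightarrow v^{\pm}[\mu]\in C^{\frac{1+\alpha}{2};1+\alpha}_0$, together with continuity of the (obviously linear) map. The plan is to prove the corresponding parabolic Schauder estimates directly from the kernel: boundedness of $v^+[\mu]$ is (ii); boundedness and $C^{\frac{\alpha}{2};\alpha}$-continuity of $\partial_{x_r}v^+[\mu]$ follow from the gradient formula in (iii) together with bounds on $D_xS_n$ and on its space- and time-increments, using the parabolic scaling $|x_1-x_2|^2\sim|t_1-t_2|$ to put spatial and temporal increments on an equal footing and the $\alpha$-H\"older continuity of $\mu$ to supply the decay that makes the difference quotients converge; the $C^{\frac{1+\alpha}{2}}$-continuity in time of $v^+[\mu]$ itself reflects, morally, the half-order smoothing of the time convolution against the $(t-\tau)^{-1/2}$-type kernel obtained after integrating $S_n$ over $\partial\Omega$ --- a gain available since $\frac{1+\alpha}{2}<1$ --- combined with the compatibility condition $\mu(0,\cdot)=0$. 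Throughout, the customary localization and flattening of $\partial\Omega$ reduces matters to kernels on a half-space graph, where the estimates are the anisotropic analogues of the elliptic ones. The exterior statement is proved the same way, the restriction to the bounded set $\overline{\mathbb{B}_n(0,R)}\setminus\Omega$ serving only to keep the suprema in the norms finite, since the Gaussian decay of $S_n$ makes all seminorms finite in any case. Finally $v^{\pm}[\mu]|_{t=0}=0$ and $\partial_{x_r}v^{\pm}[\mu]|_{t=0}=0$ because of the $\int_0^t$ in the definition, which places $v^{\pm}[\mu]$ in the subspaces carrying the subscript $0$.
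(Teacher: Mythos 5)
The paper does not actually prove Theorem \ref{pshp}: it is stated as a collection of known facts, with the proofs delegated to Lady\v{z}enskaja--Solonnikov--Ural'ceva \cite{LaSoUr68} and to \cite{LaLu19, Lu19}. So there is no in-paper argument to compare yours against; the relevant comparison is with the classical route in those references, and your outline follows it faithfully. Parts (i) and (ii) are handled correctly: the surface bounds $\int_{\partial\Omega}|S_n(s,x-y)|\,d\sigma_y\le Cs^{-1/2}$ and $\int_{\partial\Omega}|D_xS_n(s,x-y)|\,d\sigma_y\le Cs^{-1}$, the infinite-order vanishing of $S_n$ as $t\downarrow 0$ away from the spatial origin, and the splitting of the time integral near $\tau=t$ are exactly the standard ingredients. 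Part (iii) is also the standard argument: reducing the normal-derivative jump of $v[\mu]$ to the double-layer jump of Theorem \ref{pdhp}(iii) via $D_xS_n=-D_yS_n$, controlling the discrepancy kernel by $|\nu_\Omega(y)-\nu_\Omega(x_0)|\,|D_xS_n|\lesssim |x-y|^{1+\alpha}(t-\tau)^{-1-n/2}e^{-c|x-y|^2/(t-\tau)}$, and checking that the tangential derivative has no jump by cancellation; your sign bookkeeping is consistent with the stated formulas.

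The one place where your write-up is a plan rather than a proof is (iv), and you say so yourself. The assertion that $\mu\in C^{\frac{\alpha}{2};\alpha}_0$ yields $v^{\pm}[\mu]\in C^{\frac{1+\alpha}{2};1+\alpha}_0$ with continuity of the map is precisely the regularizing property whose detailed verification occupies the cited works (for the closely related double-layer operator, \cite{LaLu17, LaLu19} are devoted almost entirely to increment estimates of this type). Your paragraph correctly identifies what must be shown --- boundedness and $C^{\frac{\alpha}{2};\alpha}$ control of $D v^{\pm}[\mu]$ via the direct-value representation from (iii), the $C^{\frac{1+\alpha}{2}}$ time regularity of $v^{\pm}[\mu]$ from the half-order smoothing of the time convolution, the role of $\mu(0,\cdot)=0$, and the parabolic scaling $|x_1-x_2|^2\sim|t_1-t_2|$ --- but none of the actual increment estimates are carried out, and for the tangential component of the gradient on the boundary the absolute convergence of the direct value itself already requires the cancellation-plus-H\"older decomposition you only gesture at. This is not a wrong step, but it is the substantive analytic content of the theorem, and as written it rests on the same external results the paper cites rather than replacing them.
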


 %%%%%%%%%%%%%%%%%%%%%%%%%%%%%%%%%%%%%%%%%%%%%%%%%%%%%%%%%%%%%%%%%%%%%%%%%%%%%%
%%%%%%%%%%%%%%%%%%%%%%%%%%%%%%%%%%%%%%%%%%%%%%%%%%%%%%%%%%%%%%%%%%%%%%%%%%%%%%%%
\section{An auxiliary transmission problem}\label{sec:trans}

In the following Theorem \ref{auxtranpb} we introduce an auxiliary transmission problem and prove the existence and uniqueness of its solution. We will use such problem to characterize the layer heat potentials supported on $\Phi(\partial\Omega)$.

To shorten our statements we find convenient to introduce the following notation: If $\alpha \in \mathopen]0,1[$,  $T \in \mathopen]0,+\infty[$, $\Omega$, $\omega$ and  $\delta_\Omega$ are  as in Lemma \ref{ext1},  $\delta \in \mathopen]0,\delta_\Omega[$, and  $\Phi \in C^{1,\alpha}(\overline{ \Omega_{\omega,\delta}}, \mathbb{R}^n) \cap \mathcal{A}'_{\overline{\Omega_{\omega,\delta}}}$, then  $\mathcal{S}_\Phi$ denotes the product space 
\begin{align}\label{def:Sphi}
\mathcal{S}_\Phi :=  \; &  C_0^{\frac{1+\alpha}{2};\alpha}\left([0,T]\times\overline{\Phi(\Omega^+_{\omega,\delta})}\right)      
	\times    C_0^{\frac{1+\alpha}{2};\alpha}\left([0,T]\times\overline{\Phi(\Omega^-_{\omega,\delta})}\right)\\ \nonumber
		& \times 
		C^{\frac{\alpha}{2};\alpha}\left([0,T] \times \overline{\Phi(\Omega_{\omega,\delta}^+)}, \mathbb{R}^n\right) \times  
	 C^{\frac{\alpha}{2};\alpha}\left([0,T]\times \overline{\Phi(\Omega_{\omega,\delta}^-)}, \mathbb{R}^n\right)   \\ \nonumber
	&\times
	 C^{\frac{1+\alpha}{2}; 1+\alpha}_0\big([0,T]\times\Phi(\partial\Omega)\big) 
	 \times C^{\frac{\alpha}{2}; \alpha}_0\big([0,T]\times\Phi(\partial\Omega)\big)    \\ \nonumber
	&\times C^{\frac{1+\alpha}{2}; 1+\alpha}_0\big([0,T]\times\Phi(\partial \Omega_{\omega,\delta}^+\setminus\partial\Omega)\big) \times
	C^{\frac{1+\alpha}{2}; 1+\alpha}_0\big([0,T]\times\Phi(\partial \Omega_{\omega,\delta}^-\setminus\partial\Omega)\big).
\end{align}
Moreover, we set
\[
B[\Phi](v^+,v^-) := (D v^+)_{|[0,T]\times\Phi(\partial\Omega)}\nu_{\Phi_{|\partial\Omega}} -  (D v^-)_{|[0,T]\times\Phi(\partial\Omega)}\nu_{\Phi_{|\partial\Omega}}, 
\]
for all $(v^+, v^-) \in C^{\frac{1+\alpha}{2}; 1+\alpha}_0\left([0,T]\times\overline{\Phi(\Omega_{\omega,\delta}^+)}\right) \times 
	C^{\frac{1+\alpha}{2}; 1+\alpha}_0\left([0,T]\times\overline{\Phi(\Omega_{\omega,\delta}^-)}\right)$.

We are now ready to state  Theorem \ref{auxtranpb}. 
\begin{theorem}\label{auxtranpb}
Let $\alpha \in \mathopen]0,1[$,  $T \in \mathopen]0,+\infty[$.
Let $\Omega$, $\omega$, $\delta_\Omega$ be as in Lemma \ref{ext1}. Let $\delta \in \mathopen ]0,\delta_\Omega[$ 
and $\Phi \in C^{1,\alpha}(\overline{ \Omega_{\omega,\delta}}, \mathbb{R}^n) \cap \mathcal{A}'_{\overline{\Omega_{\omega,\delta}}}$. 
Then the transmission problem
\begin{equation}\label{auxtranpb1}
\begin{cases}
\partial_t v^+ - \Delta v^+ = \partial_t f_0^+ +\mathrm{div} f_1^+ & \mbox{ in } ]0,T\mathclose[\times\Phi(\Omega_{\omega,\delta}^+), \\
\partial_t v^- - \Delta v^- = \partial_t f_0^-+ \mathrm{div}  f_1^- & \mbox{ in }]0,T\mathclose[\times\Phi(\Omega_{\omega,\delta}^-) \\
v^+-v^- = g  & \mbox{ on } [0,T]\times\Phi(\partial\Omega),\\
B[\Phi](v^+,v^-) = g_1 & \mbox{ on } [0,T]\times\Phi(\partial\Omega),\\
v^+ = h^+ & \mbox{ on }[0,T]\times \Phi(\partial\Omega_{\omega,\delta}^+ \setminus\partial\Omega),\\
v^-=h^-  & \mbox{ on } [0,T]\times\Phi(\partial\Omega_{\omega,\delta}^- \setminus\partial\Omega),\\
v^+(0,\cdot) =0  & \mbox{ in } \overline{\Phi(\Omega_{\omega,\delta}^+)},\\
v^-(0,\cdot) =0  & \mbox{ in }\overline{\Phi(\Omega_{\omega,\delta}^-)},
\end{cases}
\end{equation}
has a unique solution $(v^+, v^-)$ in  $C^{\frac{1+\alpha}{2}; 1+\alpha}_0\left([0,T]\times\overline{\Phi(\Omega_{\omega,\delta}^+)}\right) \times 
	C^{\frac{1+\alpha}{2}; 1+\alpha}_0\Big([0,T]\times\overline{\Phi(\Omega_{\omega,\delta}^-)}\Big)$ for each given 
$(f_0^+,f_0^-,f_1^+, f_1^-, g, g_1, h^+, h^-) \in \mathcal{S}_\Phi$. 
\end{theorem}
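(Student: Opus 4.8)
The plan is to reduce the transmission problem \eqref{auxtranpb1} to a standard solvability statement for a boundary value problem on each of the two fixed (up to the diffeomorphism $\Phi$) domains $\Phi(\Omega_{\omega,\delta}^+)$ and $\Phi(\Omega_{\omega,\delta}^-)$, and then to couple the two pieces through the transmission conditions by a compactness–Fredholm argument. First I would dispose of the non‑homogeneous data that is not already supported on $\Phi(\partial\Omega)$: using the continuity and mapping properties of the heat volume potential with distributional right‑hand side of the form $\partial_t f_0^\pm + \mathrm{div}\, f_1^\pm$ established by the second author in \cite{Lu21}, one constructs explicit particular solutions $V^\pm$ on the two cylinders $[0,T]\times\Phi(\Omega^\pm_{\omega,\delta})$ lying in $C^{\frac{1+\alpha}{2};1+\alpha}_0$, so that after subtracting $V^\pm$ one is left with the \emph{homogeneous} heat equation on both domains and with modified data $\tilde g$, $\tilde g_1$, $\tilde h^\pm$ still in the appropriate subspaces of $\mathcal{S}_\Phi$. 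This is precisely the step for which classical parabolic Schauder theory does not suffice, as noted in the introduction, and it is the point at which the sharp $C^{1,\alpha}$ assumption must be handled with care.

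Next I would invoke the real‑analytic diffeomorphism $\Phi$ to pull everything back to the reference collar $\Omega_{\omega,\delta}$ and its pieces $\Omega^\pm_{\omega,\delta}$. The key is that, since $\Phi\in C^{1,\alpha}(\overline{\Omega_{\omega,\delta}},\mathbb{R}^n)\cap\mathcal{A}'_{\overline{\Omega_{\omega,\delta}}}$, the sets $\Phi(\Omega^\pm_{\omega,\delta})$ are open of class $C^{1,\alpha}$ with the stated boundaries (Lemma \ref{ext1}(iv)), and composition with $\Phi^T$ is an isomorphism between the relevant parabolic Schauder spaces on $\Phi(\cdot)$ and on the reference sets (see the composition results of Appendix \ref{appA} and Lemma \ref{rajacon} for the normal field and the change‑of‑variables density). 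After the pullback the interior normal‑derivative jump condition $B[\Phi](v^+,v^-)=g_1$ becomes a conormal (oblique‑derivative) condition with $C^{0,\alpha}$ coefficients. With the right‑hand side now homogeneous, the classical existence/uniqueness theory for the heat equation in parabolic Schauder spaces on $C^{1,\alpha}$ cylinders (Lady\v{z}enskaja–Solonnikov–Ural'ceva \cite{LaSoUr68}, together with the layer‑potential facts in Theorems \ref{pdhp}–\ref{pshp}) applies to the decoupled Dirichlet problems on each cylinder with data on the outer boundary components $\Phi(\partial\Omega^\pm_{\omega,\delta}\setminus\partial\Omega)$.

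The coupling is then carried out as follows. I would use the single layer potential $v[\cdot]$ and the double layer potential $w[\cdot]$ supported on $\Phi(\partial\Omega)$ as an ansatz: write the unknown pair $(v^+,v^-)$ restricted near $\Phi(\partial\Omega)$ as a combination of $v^\pm[\psi]$ and $w^\pm[\rho]$ with unknown densities $(\rho,\psi)\in C^{\frac{1+\alpha}{2};1+\alpha}_0\times C^{\frac{\alpha}{2};\alpha}_0$ on $[0,T]\times\Phi(\partial\Omega)$, plus a correction solving the homogeneous heat equation that absorbs the outer boundary data $\tilde h^\pm$. Imposing the Dirichlet jump $v^+-v^-=\tilde g$ and the conormal jump $B[\Phi](v^+,v^-)=\tilde g_1$ on $[0,T]\times\Phi(\partial\Omega)$, and using the jump formulas of Theorems \ref{pdhp}(iii) and \ref{pshp}(iii), yields a $2\times2$ system of the form $\frac12(\rho,\psi) + (\text{compact}) = (\tilde g,\tilde g_1)$ on the boundary — the off‑diagonal and remainder operators are integral operators with weakly singular parabolic kernels, hence compact on the Schauder scale, and the harmonic‑measure‑type correction operator depends continuously on the data. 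Invertibility of the leading part and the Fredholm alternative reduce existence to uniqueness. Uniqueness I would obtain by an energy argument: for the homogeneous problem, multiply $\partial_t v^\pm-\Delta v^\pm=0$ by $v^\pm$, integrate over $\Phi(\Omega^\pm_{\omega,\delta})$, add the two identities, and use $v^+=v^-$ and the matching of conormal derivatives on $\Phi(\partial\Omega)$ together with the vanishing Dirichlet data on the outer boundaries and the zero initial condition; this forces $v^\pm\equiv0$.

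The main obstacle I expect is twofold and concentrated in the first two paragraphs: first, producing the particular solutions $V^\pm$ for the distributional source $\partial_t f_0^\pm+\mathrm{div}\, f_1^\pm$ with values in $C^{\frac{1+\alpha}{2};1+\alpha}_0$ and zero initial trace, which is exactly why the results of \cite{Lu21} are needed and cannot be replaced by the change‑of‑variables trick of \cite{ChKrYo98,ChKrYo99} (that trick would require $C^2$ boundary); and second, verifying that after the pullback by the merely $C^{1,\alpha}$ map $\Phi$ the conormal boundary operator $B[\Phi]$ has coefficients of the right $C^{0,\alpha}$ parabolic regularity so that the Schauder and Fredholm machinery still applies — here one leans on Lemma \ref{rajacon} and the appendix. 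Once these are in place, the coupling via layer potentials and the energy uniqueness argument are essentially routine, if notationally heavy because of the space–time anisotropy.
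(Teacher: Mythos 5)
Your overall architecture (remove the distributional sources with the volume potentials of \cite{Lu21}, solve decoupled problems, glue with layer potentials on $\Phi(\partial\Omega)$, prove uniqueness by energy) matches the paper's, and you correctly identify why \cite{Lu21} is indispensable. However, your second paragraph contains a step that would fail: you propose to pull the problem back to the reference collar $\Omega_{\omega,\delta}$ and then apply ``classical existence/uniqueness theory for the heat equation.'' First, $\Phi$ is not real analytic, only $C^{1,\alpha}$; second, and more importantly, after the pullback the interior operator is no longer the heat operator, and with $\Phi$ merely $C^{1,\alpha}$ the transformed operator cannot even be written in classical (or classical weak) form, since the non-divergence remainder involves $D(|\det D\Phi|)$ — this is precisely the obstruction the paper spells out at the start of Section \ref{sec:pullback} and the reason the quotient-space formulation of Lemma \ref{lem:quot} is introduced later. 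The correct move, and the one the paper makes, is to \emph{not} pull back at this stage: by Lemma \ref{ext1}(iv) the image sets $\Phi(\Omega^\pm_{\omega,\delta})$ are themselves open of class $C^{1,\alpha}$ with the stated boundaries, so the Schauder solvability results of Lunardi--Vespri and Lieberman (and Theorems \ref{pdhp}--\ref{pshp}) apply directly on $[0,T]\times\Phi(\Omega^\pm_{\omega,\delta})$. As written, your reduction to the decoupled Dirichlet problems does not go through.

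Two further points. Your coupling step rests on an unproven compactness claim: that the off-diagonal parabolic boundary integral operators are compact on the Schauder scale over a $C^{1,\alpha}$ boundary, so that a Fredholm alternative applies. Nothing in the paper establishes this, and it is not needed: with the single/double layer ansatz the jump relations make the interface system essentially triangular ($w^+[\rho]-w^-[\rho]=-\rho$ determines $\rho$ outright, and the conormal jump of the single layer potential is exactly its density), so the paper proceeds fully constructively — decoupled Dirichlet problems carrying $g$, $\mathrm{div}f_1^\pm$ and $h^\pm$, a single layer potential with density $-g_1+B[\Phi](\tilde v^+,\tilde v^-)$ to fix the conormal jump, and one final Dirichlet problem on the \emph{whole} collar $\Phi(\Omega_{\omega,\delta})$ whose restriction to the two pieces automatically satisfies the homogeneous transmission conditions. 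Finally, your energy uniqueness argument is the paper's, but you gloss over the only delicate point there: justifying that $e^\pm(t)=\int_{\Phi(\Omega^\pm_{\omega,\delta})}(v^\pm)^2$ is $C^1$ on $]0,T[$ and that the integration by parts is legitimate up to a boundary that is only $C^{1,\alpha}$ for solutions that are only $C^{\frac{1+\alpha}{2};1+\alpha}$ up to the boundary; the paper handles this via an approximation of the support of integration in the spirit of Verchota \cite{Ve84} and \cite{Lu18}.
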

We observe that the heat equations in \eqref{auxtranpb1} are to be understood in the weak sense of 
distributions in every instance.

\begin{proof}
First we consider the existence of a solution of the transmission problem (\ref{auxtranpb1}). Let $(f_0^+,f_0^-,f_1^+, f_1^-,g, g_1, h^+, h^-) \in \mathcal{S}_\Phi$.  In particular 
$f_0^\pm \in C_0^{\frac{1+\alpha}{2};\alpha}\left([0,T] \times \overline{\Phi(\Omega^\pm_{\omega,\delta})}\right)$.
By \cite[Thm 4.4]{Lu21} there exist functions $P[\partial_t f^\pm_0] \in C_0^{\frac{1+\alpha}{2};1+\alpha}\left([0,T]\times \overline{\Phi(\Omega_{\omega,\delta}^\pm)}\right)$--which are the heat volume potentials with densities $\partial_t f_0^\pm$--that solve the equations
 \[
 \partial_t u - \Delta u = \partial_t  f_0^\pm \qquad \mbox{ in } \mathopen]0,T\mathclose[\times\Phi(\Omega_{\omega,\delta}^\pm)
 \]
 in the sense of distributions.
Then, the proof of the existence of a solution 
of problem \eqref{auxtranpb1} can be reduced to that of the existence  of a solution for
\begin{equation}\label{auxtranpb2}
\begin{cases}
\partial_t v^+ - \Delta v^+ = \mathrm{div} f_1^+  & \mbox{ in } ]0,T\mathclose[\times\Phi(\Omega_{\omega,\delta}^+), \\
\partial_t v^- - \Delta v^- = \mathrm{div} f_1^-& \mbox{ in } ]0,T\mathclose[\times\Phi(\Omega_{\omega,\delta}^-), \\
v^+-v^- = g  & \mbox{ on } [0,T]\times\Phi(\partial\Omega),\\
B[\Phi](v^+,v^-) = g_1 & \mbox{ on }[0,T]\times \Phi(\partial\Omega),\\
v^+ = h^+ & \mbox{ on } [0,T]\times\Phi(\partial\Omega_{\omega,\delta}^+ \setminus\partial\Omega),\\
v^-=h^-  & \mbox{ on } [0,T]\times\Phi(\partial\Omega_{\omega,\delta}^- \setminus\partial\Omega),\\
v^+(0,\cdot) =0  & \mbox{ in } \overline{\Phi(\Omega_{\omega,\delta}^+)},\\
v^-(0,\cdot) =0  & \mbox{ in }\overline{\Phi(\Omega_{\omega,\delta}^-)},
\end{cases}
\end{equation} 
with a possibly different array of data $(f_1^+, f_1^-,g, g_1, h^+, h^-)$.
 By known results in parabolic theory
(see  Lunardi and Vespri \cite[Thm. 4.3]{LuVe91} and 
Lieberman \cite[Thm. 6.48]{Li96}) there exists a pair  $(\tilde v^+,\tilde v^-)  \in C^{\frac{1+\alpha}{2}; 1+\alpha}_0\left([0,T]\times\overline{\Phi(\Omega_{\omega,\delta}^+)}\right) \times C^{\frac{1+\alpha}{2}; 1+\alpha}_0\left([0,T]\times\overline{\Phi(\Omega_{\omega,\delta}^-)}\right)$   such that
\begin{equation*}
\begin{cases}
\partial_t \tilde v^+ - \Delta \tilde v^+ = \mathrm{div} f_1^+ & \mbox{ in }]0,T\mathclose[\times \Phi(\Omega_{\omega,\delta}^+), \\
\tilde v^+ = g  & \mbox{ on } [0,T]\times\Phi(\partial\Omega),\\
\tilde v^+ = h^+ & \mbox{ on } [0,T]\times\Phi(\partial\Omega_{\omega,\delta}^+ \setminus\partial\Omega),\\
\tilde v^+(0,\cdot) =0  & \mbox{ in } \overline{\Phi(\Omega_{\omega,\delta}^+)},
\end{cases}
\end{equation*}
and
\begin{equation*}
\begin{cases}
\partial_t \tilde v^- - \Delta \tilde v^- = \mathrm{div} f_1^- & \mbox{ in } ]0,T\mathclose[\times\Phi(\Omega_{\omega,\delta}^-), \\
\tilde v^- = 0  & \mbox{ on }[0,T]\times \Phi(\partial\Omega),\\
\tilde v^- = h^- & \mbox{ on } [0,T]\times\Phi(\partial\Omega_{\omega,\delta}^- \setminus\partial\Omega),\\
\tilde v^-(0,\cdot) =0  & \mbox{ in }\overline{\Phi(\Omega_{\omega,\delta}^-)}.
\end{cases}
\end{equation*}
Moreover, by Theorem \ref{pshp}, the pair  of functions $(u^+,u^-) \in C^{\frac{1+\alpha}{2}; 1+\alpha}_0\left([0,T]\times\overline{\Phi(\Omega_{\omega,\delta}^+)}\right) \times 
	C^{\frac{1+\alpha}{2}; 1+\alpha}_0\left([0,T]\times\overline{\Phi(\Omega_{\omega,\delta}^-)}\right)$
	defined by 
	\[
u^\pm := v^\pm[  \mu] \quad \mbox{ in }  [0,T]\times\overline{\Phi(\Omega_{\omega,\delta}^\pm)}, 
\]
with  
\[
\mu := -g_1 + B[\Phi](\tilde v^+, \tilde v^-) \quad \mbox{ on } [0,T]\times \Phi(\partial\Omega),
\]
is a solution of the transmission problem
\begin{equation*}
\begin{cases}
\partial_t  u^+ - \Delta  u^+ = 0 & \mbox{ in } ]0,T\mathclose[\times\Phi(\Omega_{\omega,\delta}^+), \\
\partial_t  u^- - \Delta  u^- = 0 & \mbox{ in }]0,T\mathclose[\times\Phi(\Omega_{\omega,\delta}^-), \\
u^+ - u^- = 0  & \mbox{ on } [0,T]\times\Phi(\partial\Omega),\\
B[\phi](u^+,u^-) = -g_1 + B[\Phi](\tilde v^+, \tilde v^-)  & \mbox{ on }[0,T]\times \Phi(\partial\Omega),\\ 
u^+(0,\cdot) =0  & \mbox{ in } \overline{\Phi(\Omega_{\omega,\delta}^+)},\\
u^-(0,\cdot) =0  & \mbox{ in } \overline{\Phi(\Omega_{\omega,\delta}^-)}.
\end{cases}
\end{equation*}
Then,   if the boundary value problem 
\begin{equation}\label{auxtranpb3}
\begin{cases}
\partial_t  \hat v^+ - \Delta  \hat v^+ = 0 & \mbox{ in } ]0,T\mathclose[\times\Phi(\Omega_{\omega,\delta}^+), \\
\partial_t  \hat v^- - \Delta  \hat v^- = 0 & \mbox{ in } ]0,T\mathclose[\times\Phi(\Omega_{\omega,\delta}^-), \\
\hat v^+ - \hat v^- = 0  & \mbox{ on }[0,T] \times \Phi(\partial\Omega),\\
B[\phi](\hat v^+,\hat v^-) =0  & \mbox{ on }[0,T] \times \Phi(\partial\Omega),\\ 
\hat v^+ = u^+ & \mbox{ on }[0,T]\times \Phi(\partial\Omega_{\omega,\delta}^+ \setminus\partial\Omega),\\
\hat v^-=u^-  & \mbox{ on } [0,T]\times\Phi(\partial\Omega_{\omega,\delta}^- \setminus\partial\Omega),\\
\hat v^+(0,\cdot) =0  & \mbox{ in } \overline{\Phi(\Omega_{\omega,\delta}^+)},\\
\hat v^-(0,\cdot) =0  & \mbox{ in } \overline{\Phi(\Omega_{\omega,\delta}^-)},
\end{cases}
\end{equation}
has a solution $(\hat v^+,\hat v^-) \in  C^{\frac{1+\alpha}{2}; 1+\alpha}_0\left([0,T]\times\overline{\Phi(\Omega_{\omega,\delta}^+)}\right) \times 
	C^{\frac{1+\alpha}{2}; 1+\alpha}_0\left([0,T]\times\overline{\Phi(\Omega_{\omega,\delta}^-)}\right)$, 
it follows that  the pair $(v^+, v^-)$ 
defined by 
\begin{align*}
&v^\pm := \hat v^\pm+ \tilde v^\pm-u^\pm \qquad \mbox{ in }[0,T]\times\overline{\Phi(\Omega_{\omega,\delta}^\pm)}, 
\end{align*}
belongs to $C^{\frac{1+\alpha}{2}; 1+\alpha}_0\left([0,T]\times\overline{\Phi(\Omega_{\omega,\delta}^+)}\right) \times 
	C^{\frac{1+\alpha}{2}; 1+\alpha}_0\left([0,T]\times\overline{\Phi(\Omega_{\omega,\delta}^-)}\right)$
and solves problem (\ref{auxtranpb2}). 
Thus, in order to show the existence of a solution of problem $(\ref{auxtranpb1})$, it remains to show the existence of a solution  $({\hat v}^+,{\hat v}^-) $ of 
problem  $(\ref{auxtranpb3})$.
It is classical that there exists a solution 
$\hat v \in C^{\frac{1+\alpha}{2}; 1+\alpha}_0\left([0,T]\times\overline{\Phi(\Omega_{\omega,\delta})}\right)$ of the Dirichlet problem
\begin{equation*}
\begin{cases}
\partial_t  \hat v - \Delta  \hat v = 0 & \mbox{ in }]0,T\mathclose[\times \Phi(\Omega_{\omega,\delta}), \\
\hat v= u^+ & \mbox{ on }[0,T]\times \Phi(\partial\Omega_{\omega,\delta}^+ \setminus\partial\Omega),\\
\hat v=u^-  & \mbox{ on }[0,T]\times \Phi(\partial\Omega_{\omega,\delta}^- \setminus\partial\Omega),\\
\hat v(0,\cdot) =0  & \mbox{ in }  \overline{\Phi(\Omega_{\omega,\delta})}.
\end{cases}
\end{equation*}
A proof of this fact can be found for example in Baderko 
\cite[Thm 5.1]{Ba97}, Lunardi and Vespri \cite[Thm. 4.3]{LuVe91}, and 
Lieberman \cite[Thm. 6.48]{Li96}.
%\cite{LaLu19}).
Then, if we set 
\begin{align*}
&\hat v^\pm := \hat v_{\big|[0,T]\times\overline{\Phi(\Omega_{\omega,\delta}^\pm)}},
\end{align*}
we have that the pair $(\hat v^+,\hat v^-)$ belongs to  $C^{\frac{1+\alpha}{2}; 1+\alpha}_0\left([0,T]\times\overline{\Phi(\Omega_{\omega,\delta}^+)}\right) \times 
	C^{\frac{1+\alpha}{2}; 1+\alpha}_0\Big([0,T]\times\overline{\Phi(\Omega_{\omega,\delta}^-)}\Big)$ 
and solves problem $(\ref{auxtranpb3})$. Thus the existence of a solution of \eqref{auxtranpb1} is proved.

Now we turn to the proof of the uniqueness of the solution of \eqref{auxtranpb1}. Since we are dealing with a linear problem, it suffices to show that,  whenever we take homogeneous data in the right-hand sides, the only solution is $(v^+,v^-)=(0,0)$. So let  $(f_0^+,f_0^-, f_1^+, f_1^-,g, g_1, h^+, h^-) = (0,0,0,0, 0,0,0,0)$ and let 
\[
(v^+, v^-) \in C^{\frac{1+\alpha}{2}; 1+\alpha}_0\left([0,T]\times\overline{\Phi(\Omega_{\omega,\delta}^+)}\right) \times 
	C^{\frac{1+\alpha}{2}; 1+\alpha}_0\left([0,T]\times\overline{\Phi(\Omega_{\omega,\delta}^-)}\right)
\]
be a solution of problem \eqref{auxtranpb1}. Let 
\begin{align*}
e^\pm(t) := \int_{\Phi(\Omega_{\omega,\delta}^\pm)}\left(v^\pm(t,y)\right)^2\,dy  \qquad\forall t \in [0,T].
\end{align*}
Since $v^\pm$ are uniformly continuous in $[0,T]\times\overline{\Phi(\Omega_{\omega,\delta}^\pm)}$, we can see that 
 $e^\pm \in C^0\big([0,T]\big)$. But the functions $e^\pm$ are actually more regular than that. Indeed, an argument based on classical differentiation theorems for integrals depending on a parameter and on a specific approximation of the support of integration (see Verchota \cite[Thm.~1.12, p.~581]{Ve84})   shows that 
 $e^\pm \in C^1\big(]0,T[\big)$. A detailed proof of this fact can be found in \cite[Lemma 5 and Prop.~2]{Lu18}.  Following the argument in the same reference we can prove that
\[
\frac{d}{dt}e^+(t)  = -2\int_{\Phi(\Omega_{\omega,\delta}^+)}|Dv^+(t,y)|^2\,dy +2\int_{\partial\Phi(\Omega_{\omega,\delta}^+)} v^+(t,y)\frac{\partial}{\partial \nu_{\Phi(\Omega_{\omega,\delta}^+)}(y)}v^+(t,y)\,d\sigma_y\,.
\]
Then we have
\begin{align*}
\frac{d}{dt}e^+(t) 
=& -2\int_{\Phi(\Omega_{\omega,\delta}^+)}|Dv^+(t,y)|^2\,dy 
	+2\int_{\Phi(\partial\Omega_{\omega,\delta}^+\setminus\partial\Omega)} v^+(t,y)\frac{\partial}{\partial \nu_{\Phi(\Omega_{\omega,\delta}^+)}(y)}v^+(t,y)\,d\sigma_y \\
	&+2\int_{\Phi(\partial\Omega)} v^+(t,y)\frac{\partial}{\partial \nu_{\Phi(\Omega_{\omega,\delta}^+)}(y)}v^+(t,y)\,d\sigma_y\\
=& -2\int_{\Phi(\Omega_{\omega,\delta}^+)}|Dv^+(t,y)|^2\,dy +2\int_{\Phi(\partial\Omega)} v^+(t,y)\frac{\partial}{\partial \nu_{\Phi_{|\partial\Omega}}(y)}v^+(t,y)\,d\sigma_y,
\end{align*}
for all $t \in \mathopen ]0,T[$. Indeed the boundary condition on $\Phi(\partial\Omega_{\omega,\delta}^+\setminus\partial\Omega)$ implies that
\[
\int_{\Phi(\partial\Omega_{\omega,\delta}^+\setminus\partial\Omega)} v^+(t,y)\frac{\partial}{\partial \nu_{\Phi(\Omega_{\omega,\delta}^+)}(y)}v^+(t,y)\,d\sigma_y=0 
\qquad \forall t \in \mathopen ]0,T[.
\]
Similarly 
\begin{align*}
\frac{d}{dt}e^-(t)
  =& -2\int_{\Phi(\Omega_{\omega,\delta}^-)}|Dv^-(t,y)|^2\,dy +2\int_{\partial\Phi(\Omega_{\omega,\delta}^-)} v^-(t,y)\frac{\partial}{\partial \nu_{\Phi(\Omega_{\omega,\delta}^-)}(y)}v^-(t,y)\,d\sigma_y \\
=& -2\int_{\Phi(\Omega_{\omega,\delta}^-)}|Dv^-(t,y)|^2\,dy 
	+2\int_{\Phi(\partial\Omega_{\omega,\delta}^-\setminus\partial\Omega)} v^-(t,y)\frac{\partial}{\partial \nu_{\Phi(\Omega_{\omega,\delta}^-)}(y)}v^-(t,y)\,d\sigma_y \\
	&+2\int_{\Phi(\partial\Omega)} v^-(t,y)\frac{\partial}{\partial \nu_{\Phi(\Omega_{\omega,\delta}^-)}(y)}v^-(t,y)\,d\sigma_y\\
=& -2\int_{\Phi(\Omega_{\omega,\delta}^-)}|Dv^-(t,y)|^2\,dy -2\int_{\Phi(\partial\Omega)} v^-(t,y)\frac{\partial}{\partial \nu_{\Phi_{|\partial\Omega}}(y)}v^-(t,y)\,d\sigma_y,
\end{align*}
for all $t \in \mathopen ]0,T[$. Then, if we set $e := e^++e^-$ in $[0,T]$, by the transmission  conditions we get
\begin{align*}
\frac{d}{dt}e(t) =&  -2\int_{\Phi(\Omega_{\omega,\delta}^+)}|Dv^+(t,y)|^2\,dy -2\int_{\Phi(\Omega_{\omega,\delta}^-)}|Dv^-(t,y)|^2\,dy \\
	&+2\int_{\Phi(\partial\Omega)} v^+(t,y)\frac{\partial}{\partial \nu_{\Phi_{|\partial\Omega}}(y)}v^+(t,y)\,d\sigma_y-2\int_{\Phi(\partial\Omega)} v^-(t,y)\frac{\partial}{\partial \nu_{\Phi_{|\partial\Omega}}(y)}v^-(t,y)\,d\sigma_y\\
=&-2\left(\int_{\Phi(\Omega_{\omega,\delta}^+)}|Dv^+(t,y)|^2\,dy +\int_{\Phi(\Omega_{\omega,\delta}^-)}|Dv^-(t,y)|^2\,dy\right),\\
\end{align*}
for all $t \in \mathopen ]0,T[$.  Hence $\frac{d}{dt}e \leq 0$ in $]0,T[$. Since $e \geq 0$ and $e(0) = 0$, it follows that $e(t) =0$ for all $t \in [0,T]$. Then $v^+ = 0$ 
and $v^- = 0$.
\end{proof}
%\begin{remark}\label{auxrem}
%Under the assumptions of Theorem \ref{auxtranpb} it is of immediate verification that 
%$v^\pm \in X^{\frac{1+\alpha}{2};1+\alpha}(\Phi(\Omega^\pm_{\omega,\delta})_T)$. Indeed, 
%\[
%\partial_t v^\pm = \Delta v^\pm + f^\pm = \mathrm{div}( D v^\pm) + f^\pm \qquad \mbox{ in } \Phi(\Omega^\pm_{\omega,\delta})_T,
%\]
%and then $\partial_t v^\pm \in C^{0;-1+\alpha}((\Phi(\Omega^\pm_{\omega,\delta})_T)$.
%\end{remark}
%%%%%%
We can now characterize the layer potentials as specific solutions of the transmission problem   \eqref{auxtranpb1}. Indeed,
by the previous theorem and by Theorem \ref{pshp}, we can see that the pair
\begin{equation}\label{slip}
(v^+[\mu]_{|[0,T]\times\overline{\Phi(\Omega_{\omega,\delta}^+)}}, v^-[\mu]_{|[0,T]\times\overline{\Phi(\Omega_{\omega,\delta}^-)}})
\end{equation}
is the unique solution of problem (\ref{auxtranpb1}) with data 
\begin{align*}
(f_0^+, f_0^-&,f_1^+, f_1^-, g, g_1, h^+, h^-) \\
&= (0,0,0,0,0,\mu,v^+[ \mu]_{|[0,T]\times{\Phi(\partial\Omega_{\omega,\delta}^+\setminus \partial\Omega)}},v^-[\mu]_{|[0,T]\times{\Phi(\partial\Omega_{\omega,\delta}^-\setminus\partial\Omega})}).
\end{align*}
Hence, problem  (\ref{auxtranpb1}) with such data uniquely identifies the pair (\ref{slip}). Similarly, the pair 
\begin{equation*}\label{dlip}
(w^+[ \mu]_{|[0,T]\times\overline{\Phi(\Omega_{\omega,\delta}^+)}}, w^-[ \mu]_{|[0,T]\times\overline{\Phi(\Omega_{\omega,\delta}^-)}})
\end{equation*}
is the unique solution of problem (\ref{auxtranpb1}) with data 
\begin{align*}
(f_0^+, f_0^-&,f_1^+, f_1^-, g, g_1, h^+, h^-) \\
& = (0,0,0,0,-\mu,0,w^+[\mu]_{|[0,T]\times{\Phi(\partial\Omega_{\omega,\delta}^+\setminus \partial\Omega)}},w^-[\mu]_{|[0,T]\times{\Phi(\partial\Omega_{\omega,\delta}^+\setminus\partial\Omega)}}).
\end{align*}
However, to proceed with our analysis we need to characterize the the pulled-back pairs 
\begin{equation*}
(v^+[\mu]\circ\Phi^T, 
v^-[\mu]\circ\Phi^T)
\end{equation*}
and
\begin{equation*}
(w^+[\mu]\circ\Phi^T , 
w^-[\mu]\circ\Phi^T)
\end{equation*}
rather than $(v^+,v^-)$ and $(w^+,w^-)$. To do so,   we will $\Phi$-pullback  problem  \eqref{auxtranpb1} and obtain a new problem defined on the fixed domain $\Omega_{\omega,\delta}$. With the right set of data the new problem will  have the pulled-back pairs as unique solutions. 

\section{Pulling-back the problem}\label{sec:pullback}

Now our aim is to pull-back problem \eqref{auxtranpb1} to have a problem on the fixed domain $\Omega_{\omega,\delta}$.
It appears, however, that in this process we cannot keep the strong formulation of the heat operator $\partial_t -\Delta$ that we have in the first and second equations of \eqref{auxtranpb1}. 
%, {\it i.e.}
%\begin{equation}
%\partial_t u - \Delta u = f \qquad \mbox{ in } \Phi(\Omega),
%\end{equation} 
This is because the Laplace operator $\Delta$ is a second order operator and to pull-back it we should take two derivatives of $\Phi$. We should then assume $\Phi$ at least of class $C^2$, but in our paper $\Phi$ is in $C^{1,\alpha}(\overline{ \Omega_{\omega,\delta}}, \mathbb{R}^n)$. Not even the weak formulation of the heat operator would suffice. Indeed, that would yeald to pulled-back operator 
\[
\partial_t u-\mathrm{div}\left((D\Phi)^{-1}(D\Phi)^{-\top}(Du)^{\top}\right)-\frac{1}{\left|\det D\Phi\right|}D(\left|\det D\Phi\right|)(D\Phi)^{-1}(D\Phi)^{-\top}(Du)^{\top}
\]
(see Chapko, Kress and Yoon \cite[Eq. (2.10), p. 858]{ChKrYo98}) and for the term $D(\left|\det D\Phi\right|)$ to be a function we still need $\Phi$ of class $C^2$.

Then we need to adopt a different approach.  In particular, we will place the problem in a specific quotient space. This space is introduced in the following lemma, which can be verified immediately.

\begin{lemma}\label{lem:quot}
Let $\alpha \in \mathopen ]0,1[$ and $T \in \mathopen ]0,+\infty[$. Let $\Omega$ be a bounded open connected subset of $\mathbb{R}^n$ of class $C^{1,\alpha}$.
Then the set 
\begin{align*}
\mathcal{Y}_{\Omega}  := \bigg\{
w=(w_0,w_1) &\in  \,\,C_0^{\frac{1+\alpha}{2};\alpha}\left([0,T] \times \overline{\Omega}\right)\times C^{\frac{\alpha}{2};\alpha}\left([0,T]\times \overline{\Omega},\mathbb{R}^n\right) \\
&: \int_0^T\int_{\Omega}w_0\partial_t\varphi+ (D\varphi) w_1\,dxdt =0 
\quad \forall \varphi \in \mathcal{D}\left(\mathopen]0,T\mathclose[\times\Omega\right)\bigg\}
\end{align*}
is a closed linear subspace of $C_0^{\frac{1+\alpha}{2};\alpha}\left([0,T] \times \overline{\Omega}\right)\times C^{\frac{\alpha}{2};\alpha}\left([0,T]\times \overline{\Omega},\mathbb{R}^n\right) $ and the quotient 
space
\begin{equation*}\label{lem:quot1}
\mathcal{X}_\Omega :=  \quad^{\textstyle C_0^{\frac{1+\alpha}{2};\alpha}\left([0,T] \times \overline{\Omega}\right)\times C^{\frac{\alpha}{2};\alpha}\left([0,T]\times \overline{\Omega},\mathbb{R}^n\right) }\Big/_{\textstyle \mathcal{Y}_\Omega }\,,
\end{equation*}
equipped with the quotient norm, is a Banach space. We denote by $\Pi$ the canonical projection from $C_0^{\frac{1+\alpha}{2};\alpha}\left([0,T] \times \overline{\Omega}\right)\times C^{\frac{\alpha}{2};\alpha}\left([0,T]\times \overline{\Omega},\mathbb{R}^n\right)$ to $\mathcal{X}_\Omega$.
\end{lemma}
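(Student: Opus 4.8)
The plan is to verify the three assertions in the statement separately and then invoke the standard theory of quotient Banach spaces. First, for each fixed $\varphi \in \mathcal{D}\left(\mathopen]0,T\mathclose[\times\Omega\right)$ consider the functional
\[
\Lambda_\varphi(w_0,w_1) := \int_0^T\int_{\Omega} w_0\,\partial_t\varphi + (D\varphi)\,w_1 \,dx\,dt .
\]
It is plainly linear in $(w_0,w_1)$, so $\ker\Lambda_\varphi$ is a linear subspace, and $\mathcal{Y}_\Omega = \bigcap_\varphi \ker\Lambda_\varphi$ is linear as well. Moreover, each $\Lambda_\varphi$ is continuous on the product space: since $\varphi$ is smooth with compact support in $\mathopen]0,T\mathclose[\times\Omega$, one has $|\Lambda_\varphi(w_0,w_1)| \le \|\partial_t\varphi\|_{L^1}\,\|w_0\|_{C^0([0,T]\times\overline\Omega)} + \|D\varphi\|_{L^1}\,\|w_1\|_{C^0([0,T]\times\overline\Omega,\mathbb{R}^n)}$, and both sup-norms are dominated by the corresponding parabolic Schauder norms. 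Hence each $\ker\Lambda_\varphi$ is a closed subspace, and so is their intersection $\mathcal{Y}_\Omega$.

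Next, the ambient product space is Banach. The parabolic Schauder spaces $C^{\frac{1+\alpha}{2};\alpha}([0,T]\times\overline\Omega)$ and $C^{\frac{\alpha}{2};\alpha}([0,T]\times\overline\Omega,\mathbb{R}^n)$ are complete (this is classical; see Lady\v{z}enskaja, Solonnikov and Ural'ceva \cite{LaSoUr68}), and the subspace $C_0^{\frac{1+\alpha}{2};\alpha}([0,T]\times\overline\Omega)$ of functions vanishing at $t=0$ is the kernel of the continuous evaluation map $u \mapsto u(0,\cdot)$, hence closed and itself a Banach space. A finite product of Banach spaces, equipped with the maximum of the factor norms, is again a Banach space. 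Then the quotient of a Banach space by a closed linear subspace, endowed with the quotient norm, is a Banach space, and the canonical projection $\Pi$ onto it is a norm-decreasing bounded linear surjection; this is standard functional analysis, and it yields all the assertions of the lemma.

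As the authors indicate, there is no genuine obstacle here: the statement is routine. The only point deserving a line of justification — and the only place where the structure of the spaces actually enters — is the continuity of the functionals $\Lambda_\varphi$, that is, the estimate of the integral pairing $\int_0^T\int_\Omega w_0\,\partial_t\varphi + (D\varphi)\,w_1$ by the sup-norms of $w_0$ and $w_1$ (which are in turn controlled by the Schauder norms); this is what exhibits $\mathcal{Y}_\Omega$ as an intersection of closed hyperplanes and hence as a closed subspace.
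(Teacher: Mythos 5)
Your proof is correct and is precisely the routine verification the paper has in mind (the paper offers no proof, stating only that the lemma ``can be verified immediately''): you exhibit $\mathcal{Y}_\Omega$ as an intersection of kernels of the continuous linear functionals $\Lambda_\varphi$, note the ambient product of parabolic Schauder spaces is Banach, and invoke the standard fact that a quotient by a closed subspace is Banach. Nothing further is needed.
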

% and by $\partial_t +\mathrm{div}$ the map from 
% $C_0^{\frac{1+\alpha}{2}}([0,T], C^0(\overline{\Omega})) \times  B([0,T], C^{\alpha}(\overline{\Omega},\mathbb{C}^n))$ to $C^{\frac{1+\alpha}{2};0}(\overline{\Omega}_T) + C^{0;-1+\alpha}(\overline{\Omega}_T)$ which takes $(w_0,w_1)$ to $\partial_t w_0 + \mathrm{div} \,w_1$,  
%there exists a unique homeomorphism $\mathrm{\widetilde{\partial_t + div}}$ of 
%$\mathcal{X}^{0;\alpha}(\overline{\Omega}_T)$ onto $C^{\frac{-1+\alpha}{2};0}(\overline{\Omega}_T) + C^{0;-1+\alpha}(\overline{\Omega}_T)$ such that
%$\mathrm{\widetilde{\partial_t + div}} \circ \Pi = \partial_t + \mathrm{div}$.
%
%\begin{figure}[h]
%\centering
%\begin{tikzpicture}
%  \matrix (m)
%    [
%      matrix of math nodes,
%      row sep    = 3em,
%      column sep = 4em
%    ]
%    {
%        C_0^{\frac{1+\alpha}{2}}([0,T], C^0(\overline{\Omega})) \times  B([0,T], C^{\alpha}(\overline{\Omega},\mathbb{C}^n))            & C^{\frac{-1+\alpha}{2};0}(\overline{\Omega}_T) + C^{0;-1+\alpha}(\overline{\Omega}_T) \\
%      \mathcal{X}^{0;\alpha}(\overline{\Omega}_T)  &             \\
%    };
%  \path
%    (m-1-1) edge [->] node [left] {$\Pi$} (m-2-1)
%    (m-1-1.east |- m-1-2)
%      edge [->] node [above] {$\partial_t+\mathrm{div}$} (m-1-2)
%    (m-2-1.east) edge [->]
%     node [below] {$\mathrm{\widetilde{\partial_t + div}}$} (m-1-2);
%\end{tikzpicture}
%\end{figure}
In the following Lemma \ref{eqhe1} we see how we can exploit the projection $\Pi$ to transform a heat equation defined on a $\Phi$-dependent set  
$[0,T]\times \Phi(\Omega)$
into an equation on the fixed set $[0,T]\times\Omega$. To do so, we need some more notation.  If $\Omega$ is a bounded open connected subset of $\mathbb{R}^n$ of class $C^{1,\alpha}$, 
$T \in \mathopen ]0,+\infty[$, and $\alpha \in \mathopen ]0,1[$, then 
\begin{align*}
B_\Omega : (C^{1,\alpha}(\overline{\Omega},\mathbb{R}^n) \cap \mathcal{A}_{\overline{\Omega}}) \times 
C_0^{\frac{1+\alpha}{2};1+\alpha}\left([0,T]\times\overline{\Omega}\right) \to C_0^{\frac{1+\alpha}{2};\alpha}\left([0,T] \times \overline{\Omega}\right)\times C^{\frac{\alpha}{2};\alpha}\left([0,T]\times \overline{\Omega},\mathbb{R}^n\right) 
\end{align*}
is the map that takes a pair $(\Phi,u)$ to
\[
B_\Omega[\Phi,u] := \left( -\left|\det D\Phi\right| u,  (D\Phi)^{-1}(D\Phi)^{-\top}(Du)^\top \left|\det D\Phi\right|\right)
\]
and 
\begin{align*}
A_\Omega : (C^{1,\alpha}(\overline{\Omega},\mathbb{R}^n) \cap \mathcal{A}_{\overline{\Omega}}) \times 
C_0^{\frac{1+\alpha}{2};1+\alpha}\left([0,T]\times\overline{\Omega}\right) \to   \mathcal{X}_\Omega
\end{align*}
is the map that takes a pair $(\Phi,u)$ to
\begin{align*}
A_\Omega[\Phi,u] := \Pi B_\Omega[\Phi,u]\,.
\end{align*}
\begin{lemma}\label{eqhe1}
Let $\alpha \in \mathopen ]0,1[$ and $T \in \mathopen ]0,+\infty[$. Let $\Omega$ be a bounded open connected subset of $\mathbb{R}^n$ of class $C^{1,\alpha}$. Let $(\tilde f_0, \tilde f_1) \in C_0^{\frac{1+\alpha}{2};\alpha}\left([0,T] \times \overline{\Omega}\right)\times C^{\frac{\alpha}{2};\alpha}\left([0,T]\times \overline{\Omega},\mathbb{R}^n\right) $ and $(\Phi,u) \in  (C^{1,\alpha}(\overline{\Omega},\mathbb{R}^n) \cap \mathcal{A}_{\overline{\Omega}}) \times 
C_0^{\frac{1+\alpha}{2};1+\alpha}\left([0,T]\times\overline{\Omega}\right)$. Then equality
\begin{equation}\label{eqhe11}
A_\Omega[\Phi,u] = \Pi ( \tilde f_0, \tilde f_1) 
\end{equation}
holds true if and only if 
\begin{equation}\label{eqhe12}
\begin{split}
&\partial_t\left(u\circ  (\Phi^T)^{(-1)}\right)- \Delta \left(u \circ  (\Phi^T)^{(-1)}\right)\\ 
&\quad= \,\, \partial_t \left\{ \tilde f_0 \circ (\Phi^T)^{(-1)}\left|\det D(\Phi^{(-1)})\right|\right\}
+ \mathrm{div}\left\{((D\Phi) \tilde f_1)\circ (\Phi^T)^{(-1)}\left|\det D(\Phi^{(-1)})\right|\right\}
\end{split}
\end{equation}
in the sense of distributions in $\mathopen]0,T\mathclose[\times\Phi(\Omega)$. 
\end{lemma}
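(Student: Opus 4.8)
The plan is to reduce the claimed distributional identity \eqref{eqhe12} to the abstract quotient-space identity \eqref{eqhe11} by unwinding the definitions of $A_\Omega$, $B_\Omega$, and $\Pi$, and then by testing against smooth compactly supported functions and performing the change of variables $x = \Phi(y)$. First I would recall that, by definition, \eqref{eqhe11} holds if and only if $B_\Omega[\Phi,u] - (\tilde f_0, \tilde f_1) \in \mathcal{Y}_\Omega$, that is, writing $w_0 := -\left|\det D\Phi\right| u - \tilde f_0$ and $w_1 := (D\Phi)^{-1}(D\Phi)^{-\top}(Du)^\top \left|\det D\Phi\right| - \tilde f_1$, if and only if
\[
\int_0^T\int_{\Omega} w_0\,\partial_t\varphi + (D\varphi)\,w_1 \,dy\,dt = 0 \qquad \forall \varphi \in \mathcal{D}\left(\mathopen]0,T\mathclose[\times\Omega\right).
\]

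The next step is to identify the distribution on $\mathopen]0,T\mathclose[\times\Phi(\Omega)$ appearing in \eqref{eqhe12} and test it against an arbitrary $\psi \in \mathcal{D}\left(\mathopen]0,T\mathclose[\times\Phi(\Omega)\right)$. Setting $U := u \circ (\Phi^T)^{(-1)}$, the left-hand side of \eqref{eqhe12} paired with $\psi$ gives, after integration by parts in $t$ and in $x$ (legitimate since $U \in C^{\frac{1+\alpha}{2};1+\alpha}_0$, hence $C^1$ in space, vanishing at $t=0$, and $\psi$ is compactly supported in the open cylinder),
\[
\int_0^T\int_{\Phi(\Omega)} -U\,\partial_t\psi + D_xU \cdot D_x\psi \,dx\,dt,
\]
while the right-hand side paired with $\psi$ is, again by the definition of distributional $\partial_t$ and $\mathrm{div}$,
\[
\int_0^T\int_{\Phi(\Omega)} -\tilde f_0\!\circ\!(\Phi^T)^{(-1)}\left|\det D(\Phi^{(-1)})\right|\partial_t\psi - \left[((D\Phi)\tilde f_1)\!\circ\!(\Phi^T)^{(-1)}\left|\det D(\Phi^{(-1)})\right|\right]\!\cdot D_x\psi \,dx\,dt.
\]
Now I would substitute $x = \Phi(y)$ in every one of these four integrals, using the standard change-of-variables formula with Jacobian factor $\left|\det D\Phi(y)\right|$ and the chain-rule identities $D_x(F\circ(\Phi^T)^{(-1)})(\Phi(y)) = (D_yF)(y)(D\Phi(y))^{-1}$ and $\left|\det D(\Phi^{(-1)})\right|(\Phi(y)) = \left|\det D\Phi(y)\right|^{-1}$; note the two Jacobian factors cancel on the right-hand side, and the substitution turns $\psi$ into a test function $\varphi := \psi \circ \Phi^T \in \mathcal{D}\left(\mathopen]0,T\mathclose[\times\Omega\right)$, with $\psi \leftrightarrow \varphi$ a bijection between the two spaces of test functions because $\Phi^T$ is a diffeomorphism of the relevant cylinders. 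Matching $U \circ \Phi^T = u$, and contracting $(D_yu)(D\Phi)^{-1}$ against $(D_y\varphi)(D\Phi)^{-1}$ via the transpose identity (which produces exactly the factor $(D\Phi)^{-1}(D\Phi)^{-\top}$), the pulled-back identity becomes precisely
\[
\int_0^T\int_{\Omega} -u\,|\det D\Phi|\,\partial_t\varphi + (D_yu)(D\Phi)^{-1}(D\Phi)^{-\top}(D_y\varphi)^\top|\det D\Phi| \,dy\,dt = \int_0^T\int_{\Omega} -\tilde f_0\,\partial_t\varphi - (D_y\varphi)\tilde f_1 \,dy\,dt,
\]
and rearranging this is exactly the condition $(w_0,w_1)\in\mathcal{Y}_\Omega$ identified in the first step (here $(D_yu)\,M\,(D_y\varphi)^\top = (D_y\varphi)\,M^\top\,(D_yu)^\top$ for the symmetric matrix $M = (D\Phi)^{-1}(D\Phi)^{-\top}$, matching the placement of $w_1$ in the definition of $\mathcal{Y}_\Omega$).

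I expect the only genuine obstacle to be bookkeeping: keeping straight the row-versus-column conventions for $Du$, $D\varphi$ and the matrix product $(D\Phi)^{-1}(D\Phi)^{-\top}$, and checking that the two Jacobian factors $\left|\det D(\Phi^{(-1)})\right|$ on the right side of \eqref{eqhe12} are exactly cancelled by the $\left|\det D\Phi\right|$ from the change of variables while a single such factor \emph{survives} on the left side (coming from the volume element, with none to cancel it there) — this is what forces the $\left|\det D\Phi\right|$ weights to appear asymmetrically in $B_\Omega$. A minor point worth a sentence is that the integration by parts producing $D_xU\cdot D_x\psi$ from $-\Delta U$ requires only $U\in C^1$ in $x$, which is guaranteed by the regularity of the parabolic Schauder space, so no extra smoothness of $\Phi$ is needed; this is precisely the reason the weak/quotient formulation is adopted in place of the strong one. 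Since every step is an equivalence (change of variables is a bijection on test functions, and the chain of equalities is reversible), the "if and only if" follows at once.
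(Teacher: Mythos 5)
Your overall route is the same as the paper's: unwind $A_\Omega$, $B_\Omega$ and $\Pi$ to get the weak identity on $\Omega$ tested against $\varphi\in\mathcal{D}(\mathopen]0,T\mathclose[\times\Omega)$, change variables $x=\Phi(y)$, use $\bigl|\det D(\Phi^{(-1)})\bigr|\circ\Phi=|\det D\Phi|^{-1}$ and the chain rule to match the two sides, and observe the equivalence. The computational part is correct, including the asymmetric survival of the Jacobian factor on the left.

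There is, however, one genuine gap: your claim that $\varphi:=\psi\circ\Phi^T$ belongs to $\mathcal{D}(\mathopen]0,T\mathclose[\times\Omega)$ and that $\psi\leftrightarrow\varphi$ is a bijection of the two test-function spaces is false in this setting. Since $\Phi$ is only of class $C^{1,\alpha}$, the composition $\psi\circ\Phi^T$ (respectively $\varphi\circ(\Phi^T)^{(-1)}$) is only $C^{1,\alpha}$ in the space variable, hence not a $C^\infty$ test function. Concretely, in the direction \eqref{eqhe11}$\Rightarrow$\eqref{eqhe12}, the hypothesis gives the weak identity only for genuine $\varphi\in\mathcal{D}(\mathopen]0,T\mathclose[\times\Omega)$; after the change of variables you obtain the identity on $\Phi(\Omega)$ only for the non-smooth functions $\varphi\circ(\Phi^T)^{(-1)}$, whereas the distributional statement \eqref{eqhe12} requires it for every $\psi\in\mathcal{D}(\mathopen]0,T\mathclose[\times\Phi(\Omega))$. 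The paper bridges this by taking a sequence $\{\varphi_j\}_j\subseteq\mathcal{D}(\mathopen]0,T\mathclose[\times\Omega)$ converging to $\psi\circ\Phi^T$ in $C^1([0,T]\times\overline\Omega)$ and passing to the limit; this works because, after the integration by parts, both sides involve only the test function and its first derivatives against bounded coefficients. A symmetric approximation is needed for the converse. This is not bookkeeping: it is exactly the point where the low regularity of $\Phi$ bites, and it is somewhat ironic that you flag that ``no extra smoothness of $\Phi$ is needed'' for the integration by parts while implicitly assuming $\Phi\in C^\infty$ when transporting test functions. Adding the $C^1$-density step closes the gap and makes your argument coincide with the paper's proof.
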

\begin{proof}
By the definition of $A_\Omega$ and $B_\Omega$, equation \eqref{eqhe11} is equivalent to
\[
\Pi\,\left( -\left|\det D\Phi\right| u,  (D\Phi)^{-1}(D\Phi)^{-\top}(Du)^\top \left|\det D\Phi\right|\right) = \Pi\,(\tilde f_0,\tilde  f_1),
\]
which in turn we can rewrite as
\[
\begin{split}
\int_{0}^T\int_\Omega -\left|\det D\Phi\right| u\, \partial_t\varphi +& (D\varphi)(D\Phi)^{-1}(D\Phi)^{-\top}(Du)^\top\left|\det D\Phi\right| \,dxdt \\
&=
\int_{0}^T\int_\Omega \tilde  f_0 \partial_t \varphi + (D\varphi) \tilde f_1 \,dxdt \qquad \forall \varphi \in \mathcal{D}\left(\mathopen]0,T\mathclose[\times\Omega\right).
\end{split}
\]
%The above equation is clearly equivalent to 
%\begin{align}\label{eqhe1a}
%\int_{0}^T\int_\Omega& D\varphi(D\Phi)^{-1}(D\Phi)^{-\top}(Du)^\top-u\partial_t \varphi\\  \nonumber
%&-\varphi \left(\frac{1}{|\det D\Phi|}D(|\det D\Phi|)(D\Phi)^{-1}(D\Phi)^{-\top}(Du)^\top\right)\,dxdt =
%-\int_{0}^T\int_\Omega D\varphi f_1 \,dxdt \\  \nonumber
%&\qquad\qquad\qquad\qquad\qquad\qquad\qquad\qquad\qquad\qquad\qquad\qquad\qquad\qquad
%\qquad \forall \varphi \in \mathcal{D}(\Omega_T).
%\end{align}
%Via mollifications, we can approximate $|\det D\Phi| \varphi$ with a sequence of smooth functions 
%$\{\varphi_j\}_{j \in \mathbb{N}}$. Using $\{\varphi_j\}_{j \in \mathbb{N}}$ as test functions in \eqref{eqhe1a} 
%and taking the limit
%we obtain that  equation \eqref{eqhe1a} is equivalent to the equation
%\begin{align*}
%\int_{0}^T\int_\Omega D\varphi(D\Phi)^{-1}&(D\Phi)^{-\top}(Du)^\top|\det D\Phi|-u|\det D\Phi|\partial_t \varphi\,dxdt\\ &=
%-\int_{0}^T\int_\Omega D\varphi f|\det D\Phi| +\varphi D(|\det D\Phi|) f\,dxdt  
%\qquad \forall \varphi \in \mathcal{D}(\Omega_T).
%\end{align*}
Then, by a change of  variables in the integrals, we obtain
\begin{equation}\label{eqhe13}
\begin{split}
&\int_{0}^T\int_{\Phi(\Omega)} -(u\circ (\Phi^T)^{(-1)})\partial_t (\varphi\circ (\Phi^T)^{(-1)}) + (D(\varphi\circ (\Phi^T)^{(-1)}))(D(u\circ (\Phi^T)^{(-1)}))^\top\,dxdt\\ 
&\quad=
\int_{0}^T\int_{\Phi(\Omega)} (\tilde f_0 \circ (\Phi^T)^{(-1)})\bigl|\det D(\Phi^{(-1)})\bigr| \partial_t (\varphi\circ (\Phi^T)^{(-1)})\\
&\qquad\qquad\qquad\qquad+(D(\varphi\circ (\Phi^T)^{(-1)})) ((D\Phi)\tilde  f_1)\circ  (\Phi^T)^{(-1)}\bigl|\det D(\Phi^{(-1)})\bigr| \,dxdt
\end{split}
\end{equation}
for all $\varphi \in \mathcal{D}\left(\mathopen]0,T\mathclose[\times\Omega\right)$. Now, if $\psi \in  \mathcal{D}(\mathopen]0,T\mathclose[\times\Phi(\Omega))$ we take a sequence $\{\varphi_j\}_j$ in $\mathcal{D}\left(\mathopen]0,T\mathclose[\times\Omega\right)$ that converges to $\psi\circ \Phi^T$ in $C^{1}\left([0,T]\times \overline{\Omega}\right)$ and we see that \eqref{eqhe13} implies that  
\[
\begin{split}
&\int_{0}^T\int_{\Phi(\Omega)}-(u\circ (\Phi^T)^{(-1)})\partial_t \psi 
+(D\psi)(D(u\circ (\Phi^T)^{(-1)}))^\top \,dxdt \\ &\qquad\qquad\qquad=
\int_{0}^T\int_{\Phi(\Omega)} (\tilde f_0 \circ (\Phi^T)^{(-1)})\bigl|\det D(\Phi^{(-1)})\bigr| \partial_t \psi\\
&\qquad\qquad\qquad\qquad\qquad\quad+(D\psi) ((D\Phi)\tilde  f_1)\circ  (\Phi^T)^{(-1)}\bigl|\det D(\Phi^{(-1)})\bigr| \,dxdt\,.
\end{split}
\]
Thus,  equation \eqref{eqhe12} holds in the sense of distributions in $]0,T[ \times \Phi(\Omega)$. 

We have shown that \eqref{eqhe11} implies \eqref{eqhe12}. The proof that \eqref{eqhe12} implies \eqref{eqhe11} is similar: we have to follow the above argument  backward.
\end{proof}
So much for the pull-back of the heat equation on a set $\Phi(\Omega)$, we can now go back to the transmission problem \eqref{auxtranpb1}, which is defined on the $\Phi$-dependent pair of sets 
$(\Phi(\Omega_{\omega,\delta}^+),\Phi(\Omega_{\omega,\delta}^-))$, and transform it into a problem on $(\Omega_{\omega,\delta}^+,\Omega_{\omega,\delta}^-)$. For the first two equations we can use Lemma \ref{eqhe1}, so it remains to see what to do with the other four equations. To shorten our notation, we set 
\begin{align*}
\mathcal{Z} := &\,\,  \mathcal{X}_{\Omega_{\omega,\delta}^+}
\times  \mathcal{X}_{\Omega_{\omega,\delta}^-}
\times C_0^{\frac{1+\alpha}{2};1+\alpha}([0,T]\times\partial\Omega) 
\times C_0^{\frac{\alpha}{2};\alpha}([0,T]\times\partial\Omega)\\
&\times C_0^{\frac{1+\alpha}{2};1+\alpha}\left([0,T]\times(\partial\Omega_{\omega,\delta}^+ \setminus \partial\Omega)\right) 
\times C_0^{\frac{1+\alpha}{2};1+\alpha}\left([0,T]\times(\partial\Omega_{\omega,\delta}^- \setminus \partial\Omega)\right), 
\end{align*}
(cf.~\eqref{lem:quot} for the definition of the quotient space $\mathcal{X}_\Omega$). Then we have the following.

\begin{theorem}\label{m1eqtp}
Let   $T \in \mathopen ]0,+\infty[$, $\alpha\in \mathopen ]0,1[$. Let $\Omega$, $\omega$ and  
$\delta_\Omega$ be as in Lemma \ref{ext1}. Let $\delta \in \mathopen ]0,\delta_\Omega[$. 
For $\Phi \in C^{1,\alpha}(\overline{ \Omega_{\omega,\delta}},\mathbb{R}^n) \cap \mathcal{A}'_{\overline{\Omega_{\omega,\delta}}}$, let $R_\Phi$ denote the map from 
\[
  {C}^{\frac{1+\alpha}{2}; 1+\alpha}_0\left([0,T]\times\overline{\Omega_{\omega,\delta}^+}\right) \times  
	{C}^{\frac{1+\alpha}{2}; 1+\alpha}_0\left([0,T]\times\overline{\Omega_{\omega,\delta}^-}\right)
\]
 to $\mathcal{Z}$ defined by
\begin{align}\label{defRm1}
&R_\Phi[U^+, U^-] := \bigg(A_{\Omega_{\omega,\delta}^+}[\Phi, U^+], A_{ \Omega_{\omega,\delta}^-}[\Phi, U^-], 
U_{|[0,T]\times\partial\Omega}^+-U_{|[0,T]\times\partial\Omega}^-, \\ \nonumber
&\qquad \qquad\qquad\qquad J_\Phi[U^+, U^-], U^+_{|[0,T]\times(\partial\Omega_{\omega,\delta}^+\setminus\partial\Omega)},  
U^-_{|[0,T]\times(\partial\Omega_{\omega,\delta}^-\setminus\partial\Omega)}\bigg),
\end{align}
where 
\[
J_\Phi[U^+, U^-] := DU^+(D\Phi)^{-1}{\bf n}[\Phi]-DU^-(D\Phi)^{-1}{\bf n}[\Phi] \qquad \mbox{ on } [0,T]\times\partial\Omega\,,
\]
and 
\[
{\bf n}[\Phi] := \frac{(D\Phi(x))^{-\top}\nu_{\Omega}(x)}{|(D\Phi(x))^{-\top}\nu_{\Omega}(x)|} \qquad \forall x \in \partial \Omega\,.
\]
%for all \[
%(\Phi, U^+, U^-) \in (C^{1,\alpha}(\overline{ \Omega_{\omega,\delta}},\mathbb{R}^n) \cap \mathcal{A}'_{\overline{\Omega_{\omega,\delta}} }) \times C^{\frac{1+\alpha}{2}; 1+\alpha}_0\left([0,T]\times\overline{\Omega_{\omega,\delta}^+}\right) \times  
%C^{\frac{1+\alpha}{2}; 1+\alpha}_0\left([0,T]\times\overline{\Omega_{\omega,\delta}^-}\right).
%\]
Then the following statements hold.
\begin{itemize}
\item[(i)]  Let $\Phi \in C^{1,\alpha}(\overline{\Omega_{\omega,\delta}},\mathbb{R}^n) \cap \mathcal{A}'_{\overline{\Omega_{\omega,\delta}} }$. Let $(F^+, F^-, G, G_1, H^+, H^-) \in \mathcal{Z}$.  Then 
 $(U^+, U^-) \in C^{\frac{1+\alpha}{2}; 1+\alpha}_0\left([0,T]\times\overline{\Omega_{\omega,\delta}^+}\right) \times  
	C^{\frac{1+\alpha}{2}; 1+\alpha}_0\left([0,T]\times\overline{\Omega_{\omega,\delta}^-}\right)$ is a solution of  the equation 
\begin{equation}\label{m1eqtp1}
R_\Phi[U^+, U^-] = (F^+, F^-, G, G_1, H^+, H^-)
\end{equation}
 if and only if the pair 
 \[
 \left(U^+ \circ(\Phi^T)^{(-1)}, U^- \circ(\Phi^T)^{(-1)}\right) \in  C^{\frac{1+\alpha}{2}; 1+\alpha}_0\left([0,T]\times\overline{\Phi(\Omega_{\omega,\delta}^+)}\right) \times  
	C^{\frac{1+\alpha}{2}; 1+\alpha}_0\left([0,T]\times\overline{\Phi(\Omega_{\omega,\delta}^-)}\right)
 \]
is a solution of problem (\ref{auxtranpb1}) with data 
\begin{align}\label{m1eqtp2}
&f_0^\pm :=   \tilde  f^\pm_0 \circ (\Phi^T)^{(-1)}\bigl|\det D(\Phi^{(-1)})\bigr|\,, \\ \nonumber
& f_1^\pm := ((D\Phi)\tilde f^\pm_1)\circ (\Phi^T)^{(-1)}\bigl|\det D(\Phi^{(-1)})\bigr|\,,\\ \nonumber
&g := G \circ(\Phi^T)^{(-1)}, \quad g_1:= G_1 \circ(\Phi^T)^{(-1)}, \quad h^\pm := H^\pm \circ(\Phi^T)^{(-1)},
\end{align}
and with $(\tilde f^\pm_0, \tilde f_1^\pm)  \in C_0^{\frac{1+\alpha}{2};\alpha}\left([0,T] \times \overline{\Omega_{\omega,\delta}^\pm}\right)  \times  C^{\frac{\alpha}{2};\alpha}\left([0,T] \times \overline{\Omega_{\omega,\delta}^\pm},\mathbb{R}^n\right)$  such that 
$\Pi (\tilde f^\pm_0, \tilde f_1^\pm)= F^\pm$.

\item[(ii)] Let $\Phi \in C^{1,\alpha}(\overline{\Omega_{\omega,\delta}},\mathbb{R}^n) \cap \mathcal{A}'_{\overline{\Omega_{\omega,\delta}}}$. Then $R_\Phi$ is a linear homeomorphism from 
\[
C^{\frac{1+\alpha}{2}; 1+\alpha}_0\left([0,T]\times\overline{\Omega_{\omega,\delta}^+}\right) \times  
	C^{\frac{1+\alpha}{2}; 1+\alpha}_0\left([0,T]\times\overline{\Omega_{\omega,\delta}^-}\right)
\]
onto $\mathcal{Z}$.
\end{itemize}
\end{theorem}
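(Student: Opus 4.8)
The plan is to deduce both statements from a single observation: the operator $R_\Phi$ of \eqref{defRm1} is, entry by entry, the $\Phi$-pull-back of the data-to-solution correspondence of the transmission problem \eqref{auxtranpb1}. So I would first prove (i) by reading \eqref{defRm1} one component at a time and translating each condition on $(U^+,U^-)$ into the corresponding condition on the pair $(v^+,v^-):=\big(U^+\circ(\Phi^T)^{(-1)},\,U^-\circ(\Phi^T)^{(-1)}\big)$, and then obtain (ii) by combining (i) with the unique solvability of \eqref{auxtranpb1} from Theorem \ref{auxtranpb}, together with boundedness of $R_\Phi$ and the open mapping theorem.

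For the first two components I would invoke Lemma \ref{eqhe1} with $\Omega$ replaced by $\Omega_{\omega,\delta}^+$ and $\Omega_{\omega,\delta}^-$. If $(\tilde f_0^\pm,\tilde f_1^\pm)$ is any representative with $\Pi(\tilde f_0^\pm,\tilde f_1^\pm)=F^\pm$, then $A_{\Omega_{\omega,\delta}^\pm}[\Phi,U^\pm]=F^\pm$ is equivalent to the distributional identity \eqref{eqhe12} for $v^\pm$, whose right-hand side is precisely $\partial_t f_0^\pm+\mathrm{div}\,f_1^\pm$ with $f_0^\pm,f_1^\pm$ given by \eqref{m1eqtp2}; these are exactly the first two equations of \eqref{auxtranpb1}. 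One should note here that this right-hand side is well defined by $F^\pm\in\mathcal{X}_{\Omega_{\omega,\delta}^\pm}$ alone: replacing $(\tilde f_0^\pm,\tilde f_1^\pm)$ by an element of $\mathcal{Y}_{\Omega_{\omega,\delta}^\pm}$ changes it, via the change of variables used in the proof of Lemma \ref{eqhe1}, by the distribution attached to a member of $\mathcal{Y}_{\Omega_{\omega,\delta}^\pm}$, which vanishes by the defining property of that space in Lemma \ref{lem:quot}. For the third, fifth and sixth components, the trace conditions $U^+_{|\partial\Omega}-U^-_{|\partial\Omega}=G$ and $U^\pm_{|\partial\Omega_{\omega,\delta}^\pm\setminus\partial\Omega}=H^\pm$ become, under the boundary diffeomorphism $x\mapsto\Phi(x)$, the conditions $v^+-v^-=g$ on $[0,T]\times\Phi(\partial\Omega)$ and $v^\pm=h^\pm$ on $[0,T]\times\Phi(\partial\Omega_{\omega,\delta}^\pm\setminus\partial\Omega)$, with $g,h^\pm$ as in \eqref{m1eqtp2}. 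For the fourth component I would use the classical formula ${\bf n}[\Phi]=\nu_{\Phi_{|\partial\Omega}}\circ\Phi_{|\partial\Omega}$ for the pull-back of the outer unit normal (see also Lemma \ref{rajacon}(ii)) together with the chain rule $D_xv^\pm=(DU^\pm\circ(\Phi^T)^{(-1)})\,\big((D\Phi)\circ(\Phi^T)^{(-1)}\big)^{-1}$, which yields $J_\Phi[U^+,U^-]\circ(\Phi^T)^{(-1)}=B[\Phi](v^+,v^-)$ on $[0,T]\times\Phi(\partial\Omega)$, so that $J_\Phi[U^+,U^-]=G_1$ is equivalent to $B[\Phi](v^+,v^-)=g_1$. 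The two initial conditions are automatic, since $\Phi^T$ fixes the slice $\{t=0\}$ and we work in the subspaces of functions vanishing at $t=0$. Finally, using the composition and multiplication results of Appendix \ref{appA} (composition with the $C^{1,\alpha}$ diffeomorphism $\Phi^T$ and its inverse, and multiplication by the $C^{0,\alpha}$, purely spatial coefficients $|\det D\Phi^{(-1)}|$, $D\Phi$, and so on), one checks that $(U^+,U^-)$ belongs to the source space if and only if $(v^+,v^-)$ belongs to the analogous space over $\Phi(\Omega_{\omega,\delta}^\pm)$, and that the data \eqref{m1eqtp2} indeed lie in $\mathcal{S}_\Phi$ of \eqref{def:Sphi}. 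This settles (i).

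For (ii), linearity of $R_\Phi$ is clear from \eqref{defRm1}. Its boundedness follows because each component is a composition of bounded maps: the differentiation $U\mapsto DU$ from $C^{\frac{1+\alpha}{2};1+\alpha}_0$ to $C^{\frac{\alpha}{2};\alpha}$, pointwise multiplication by the fixed $C^{0,\alpha}$ matrix-valued functions $|\det D\Phi|$, $(D\Phi)^{-1}(D\Phi)^{-\top}$, $(D\Phi)^{-1}$ and ${\bf n}[\Phi]$, the trace operators onto $[0,T]\times\partial\Omega$ and $[0,T]\times(\partial\Omega_{\omega,\delta}^\pm\setminus\partial\Omega)$, and the continuous canonical projection $\Pi$ of Lemma \ref{lem:quot}; the relevant estimates are again those of Appendix \ref{appA}. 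Bijectivity is then immediate from (i): given $(F^+,F^-,G,G_1,H^+,H^-)\in\mathcal{Z}$, one forms the data \eqref{m1eqtp2} in $\mathcal{S}_\Phi$, takes the unique solution $(v^+,v^-)$ of \eqref{auxtranpb1} furnished by Theorem \ref{auxtranpb}, and sets $U^\pm:=v^\pm\circ\Phi^T$; by (i) this is the unique solution of \eqref{m1eqtp1} in the source space. Hence $R_\Phi$ is a continuous linear bijection between the Banach spaces of Lemma \ref{lem:quot}, and the open mapping theorem shows it is a homeomorphism.

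I expect the difficulty to lie entirely in the bookkeeping rather than in any conceptual step: one has to track the anisotropic parabolic H\"older exponents through composition with $C^{1,\alpha}$ diffeomorphisms of $[0,T]\times\overline{\Omega_{\omega,\delta}^\pm}$ and through multiplication by the purely spatial $C^{0,\alpha}$ coefficients, so that every entry of $R_\Phi[U^+,U^-]$ lands in precisely the prescribed factor of $\mathcal{Z}$, and likewise that \eqref{m1eqtp2} produces an element of $\mathcal{S}_\Phi$. The one mildly delicate point --- that the distributional right-hand side in the first two equations of \eqref{auxtranpb1} depends only on $F^\pm$ and not on the chosen representative --- is exactly what the passage to the quotient spaces $\mathcal{X}_{\Omega_{\omega,\delta}^\pm}$ was designed to take care of, and it follows from the change-of-variables computation already carried out in the proof of Lemma \ref{eqhe1}.
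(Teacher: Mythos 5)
Your proposal is correct and follows essentially the same route as the paper: Lemma \ref{eqhe1} for the two heat equations, direct transport of the trace conditions, the identity ${\bf n}[\Phi]=\nu_{\Phi_{|\partial\Omega}}\circ\Phi_{|\partial\Omega}$ together with the chain rule for the transmission condition, and then bijectivity from Theorem \ref{auxtranpb} plus the open mapping theorem for (ii). Your extra remark on the independence of the distributional right-hand side from the choice of representative in $\mathcal{X}_{\Omega_{\omega,\delta}^\pm}$ is a welcome clarification that the paper leaves implicit.
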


\begin{proof}
We first consider statement (i).  Let   $(U^+, U^-) \in C^{\frac{1+\alpha}{2}; 1+\alpha}_0\left([0,T]\times\overline{\Omega_{\omega,\delta}^+}\right) \times  
C^{\frac{1+\alpha}{2}; 1+\alpha}_0\Big([0,T]\times\overline{\Omega_{\omega,\delta}^-}\Big)$ satisfy  equation 
(\ref{m1eqtp1}). Lemmas \ref{lem:quot} and  \ref{eqhe1} imply  that there exist  $( \tilde f^\pm_0, \tilde  f_1^\pm)  \in C_0^{\frac{1+\alpha}{2};\alpha}\left([0,T] \times \overline{\Omega_{\omega,\delta}^\pm}\right)  \times  C^{\frac{\alpha}{2};\alpha}\left([0,T] \times \overline{\Omega_{\omega,\delta}^\pm},\mathbb{R}^n\right)$  such that 
$\Pi (\tilde f^\pm_0,  \tilde f_1^\pm) = F^\pm$ and such that
\begin{align*}
(\partial_t- \Delta) (U^\pm \circ(\Phi^T)^{(-1)}) =&\,\,\mathrm{div}\left\{((D\Phi) \tilde f^\pm_1)\circ (\Phi^T)^{(-1)}|\det D(\Phi^{(-1)})|\right\} \\
&+ \partial_t \left\{  \tilde f^\pm_0 \circ (\Phi^T)^{(-1)}|\det D(\Phi^{(-1)})| \right\}
 \qquad\mbox{ in } \mathopen]0,T\mathclose[\times\Phi(\Omega_{\omega,\delta}^\pm).
\end{align*}
Clearly we have that 
\[
U^+\circ (\Phi^T)^{(-1)}-U^-\circ(\Phi^T)^{(-1)} = G \circ(\Phi^T)^{(-1)}  = g
 \qquad \mbox{ on }[0,T]\times \Phi(\partial\Omega),
\]
and 
\[
U^\pm\circ (\Phi^T)^{(-1)}
= H^\pm \circ (\Phi^T)^{(-1)} = h^\pm
 \qquad \mbox{ on } [0,T]\times\Phi(\partial\Omega^\pm_{\omega,\delta}\setminus \partial\Omega).
\]
Since
\[
\nu_{\Phi_{|\partial \Omega}} \circ \Phi_{|\partial\Omega} =
 \frac{(D\Phi)^{-\top}\nu_{\Omega}}{|(D\Phi)^{-\top}\nu_{\Omega}|}= {\bf n}[\Phi] 
  \qquad \mbox{ on } \partial\Omega
\] 
(see,  e.g., Lanza de Cristoforis and Rossi \cite[Lem. 4.2, p. 207]{LaRo08}),  we have that 
\begin{align*}
(DU^\pm)\circ(\Phi^T)^{(-1)}&(D\Phi)^{-1}\circ \Phi^{(-1)}{\bf n}[\Phi]\circ \Phi^{(-1)}  \\
&= (DU^\pm) \circ (\Phi^T)^{(-1)} (D\Phi^{(-1)})\nu_{\Phi_{|\partial\Omega }}\\
&= \frac{\partial}{\partial \nu_{\Phi_{|\partial\Omega}}}(U^\pm \circ(\Phi^T)^{(-1)}) 
\qquad\qquad \mbox{ on } [0,T]\times\Phi(\partial\Omega).
\end{align*}
Accordingly, equality
\[
 DU^+(D\Phi)^{-1}{\bf n}[\Phi]-DU^-(D\Phi)^{-1}{\bf n}[\Phi] = G_1 \qquad \mbox{ on } [0,T]\times\partial\Omega,
\]
implies that
\[
\frac{\partial}{\partial \nu_{|\Phi_{\partial\Omega}}}(U^+ \circ(\Phi^T)^{(-1)}) 
- \frac{\partial}{\partial \nu_{|\Phi_{\partial\Omega}}}(U^- \circ(\Phi^T)^{(-1)})
=G_1  \circ (\Phi^T)^{(-1)} = g_1 \qquad \mbox{ on } [0,T]\times\Phi(\partial\Omega).
\]
Thus, the pair  
$\left(U^+ \circ(\Phi^T)^{(-1)}, U^- \circ(\Phi^T)^{(-1)}\right)$ solves the transmission problem \eqref{auxtranpb1} with data as in  \eqref{m1eqtp2}. 
The converse can be proved by reading backward the above argument. 

We now prove statement (ii). 
We can readily see that $R_\Phi$ is linear and continuous. So,  
it suffices to show that $R_\Phi$ is bijective to deduce from the open mapping theorem  that it is a homeomorphism. 
We start by proving that it is injective.  Let $(U^+,U^-) \in 
C^{\frac{1+\alpha}{2}; 1+\alpha}_0\left([0,T]\times\overline{\Omega_{\omega,\delta}^+}\right) \times  
C^{\frac{1+\alpha}{2}; 1+\alpha}_0\left([0,T]\times\overline{\Omega_{\omega,\delta}^-}\right)$ and suppose that 
\[
R_\Phi[U^+,U^-] = (0,0,0,0,0,0).
\]
By statement (i), the pair 
 \[
 \left(U^+ \circ(\Phi^T)^{(-1)}, U^- \circ(\Phi^T)^{(-1)}\right)
 \]
solves problem (\ref{auxtranpb1}) with data $(f_0^+, f_0^-,f_1^+, f_1^-, g, g_1, h^+, h^-) = (0,0,0,0,0,0,0,0)$.  Then, Theorem 
\ref{auxtranpb} implies that $\left(U^+ \circ(\Phi^T)^{(-1)}, U^- \circ(\Phi^T)^{(-1)}\right)=(0,0)$ and 
accordingly we have $(U^+,U^-)=(0,0)$.
Now it remains to show that  $R_\Phi$ is surjective.  Let 
\[
(F^+, F^-, G, G_1, H^+, H^-) \in \mathcal{Z}.
\]
It is easy to check that  $( f_0^+, f_0^-, f_1^+, f_1^-,g, g_1, h^+, h^-)$ defined as in (\ref{m1eqtp2}) belongs to 
$\mathcal{S}_\Phi$ (cf. \eqref{def:Sphi}). Accordingly, Theorem \ref{auxtranpb} implies that there exists a pair 
\[
(v^+, v^-)\in C^{\frac{1+\alpha}{2}; 1+\alpha}_0\left([0,T]\times\overline{\Phi(\Omega_{\omega,\delta}^+)}\right) \times 
	C^{\frac{1+\alpha}{2}; 1+\alpha}_0\Big([0,T]\times\overline{\Phi(\Omega_{\omega,\delta}^-)}\Big)
\]
that solves the corresponding problem \eqref{auxtranpb1}.  Then statement (i) implies that 
\[
(U^+,U^-):=(v^+\circ\Phi^T,v^-\circ\Phi^T)
\in C^{\frac{1+\alpha}{2}; 1+\alpha}_0\left([0,T]\times\overline{\Omega_{\omega,\delta}^+}\right) \times  
	C^{\frac{1+\alpha}{2}; 1+\alpha}_0\left([0,T]\times\overline{\Omega_{\omega,\delta}^-}\right)
\]
is a solution of
\[
R_\Phi[U^+,U^-]  = (F^+, F^-, G, G_1, H^+, H^-).
\]
\end{proof}
We are now ready to characterize the pairs
\begin{align*}
 \left(V^+_\Phi[\mu],  V^-_\Phi[\mu]\right):=\left(v^+[\mu\circ(\Phi^T)^{(-1)}]\circ \Phi^T , 
v^-[\mu\circ(\Phi^T)^{(-1)}]\circ\Phi^T \right)
\end{align*}
and
\begin{align*}
 \left(W^+_\Phi[\mu],  W^-_\Phi[\mu]\right):=\left(w^+[\mu\circ(\Phi^T)^{(-1)}]\circ \Phi^T , 
w^-[\mu\circ(\Phi^T)^{(-1)}]\circ\Phi^T \right)
\end{align*}
as solutions of $\Phi$-dependent equations defined on a fixed domain. This will be done in the following Theorem \ref{eqabspbm1}.  The proof is a straightforward consequence 
of Theorem \ref{m1eqtp}, of Theorems 
\ref{pdhp} and \ref{pshp} on the properties of the layer heat potentials,  and of Lemma \ref{rajacon} (i) on the change of variable in integrals over $\Phi(\partial\Omega)$.
\begin{theorem}\label{eqabspbm1}
Let $T \in \mathopen ]0,+\infty[$, $\alpha\in \mathopen ]0,1[$. Let $\Omega$, $\omega$ and  
$\delta_\Omega$ be as in Lemma \ref{ext1}. Let $\delta \in \mathopen ]0,\delta_\Omega[$. 
 Let $\Phi \in C^{1,\alpha}(\overline{\Omega_{\omega,\delta}},\mathbb{R}^n) \cap \mathcal{A}'_{\overline{\Omega_{\omega,\delta}}}$. Then the following statements hold.
\begin{itemize}
\item[(i)] Let $\mu \in C^{\frac{\alpha}{2};\alpha}_0([0,T]\times\partial\Omega)$. Then
\begin{equation}\label{eqabspbm11}
(V^+_\Phi[\mu],  V^-_\Phi[\mu]) = R_\Phi^{(-1)}(0,0,0,\mu,\mathcal{V}^+_{\delta,\Phi}[\mu],\mathcal{V}^-_{\delta,\Phi}[\mu]) 
\end{equation}
with 
\begin{align*}
\mathcal{V}^\pm_{\delta,\Phi}[\mu](t,x) := \int_0^t \int_{\partial\Omega} S_n(t-\tau,\Phi(x) - \Phi(y)&)\mu(\tau,y) \tilde\sigma_n[\Phi_{|\partial\Omega}](y)\,d\sigma_yd\tau \\ 
& \forall (t,x) \in [0,T]\times\big(\partial\Omega^\pm_{\omega,\delta}\setminus \partial \Omega\big)\,.
\end{align*}
\item[(ii)] Let $\mu \in C^{\frac{1+\alpha}{2};1+\alpha}_0([0,T]\times\partial\Omega)$. Then 
\begin{equation}\label{eqabspbm12}
(W^+_\Phi[\mu],  W^-_\Phi[\mu])=R_\Phi^{(-1)}(0,0,-\mu,0,\mathcal{W}^+_{\delta,\Phi}[\mu],\mathcal{W}^-_{\delta,\Phi}[\mu]) 
\end{equation}
with 
\begin{align*}
\mathcal{W}^\pm_{\delta,\Phi}[\mu] := - \int_0^t \int_{\partial\Omega} DS_n(t-\tau,\Phi(x) - \Phi(y))&\nu_{\Phi_{|\partial\Omega}}(\Phi(y))\mu(\tau,y) \tilde\sigma_n[\Phi_{|\partial\Omega}](y)\,d\sigma_yd\tau \\
& \forall (t,x) \in [0,T]\times\big(\partial\Omega^\pm_{\omega,\delta}\setminus \partial \Omega\big)\,.
\end{align*}
\end{itemize}
\end{theorem}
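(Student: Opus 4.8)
The plan is to reduce everything to Theorem \ref{m1eqtp} (ii), which tells us that $R_\Phi$ is a linear homeomorphism, so that the identities \eqref{eqabspbm11} and \eqref{eqabspbm12} are equivalent to showing that the pairs $(V^+_\Phi[\mu], V^-_\Phi[\mu])$ and $(W^+_\Phi[\mu], W^-_\Phi[\mu])$ satisfy the corresponding equation $R_\Phi[\,\cdot\,] = (\cdots)$. By Theorem \ref{m1eqtp} (i), this in turn is equivalent to checking that the un-pulled-back pairs $(v^+[\mu\circ(\Phi^T)^{(-1)}]_{|[0,T]\times\overline{\Phi(\Omega_{\omega,\delta}^+)}}, v^-[\mu\circ(\Phi^T)^{(-1)}]_{|[0,T]\times\overline{\Phi(\Omega_{\omega,\delta}^-)}})$ and the analogous double-layer pair solve the transmission problem \eqref{auxtranpb1} with the data obtained from \eqref{m1eqtp2}. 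But the latter fact is exactly the characterization of layer potentials as solutions of \eqref{auxtranpb1} that was established right after the proof of Theorem \ref{auxtranpb}: for a density $\nu$ on $[0,T]\times\Phi(\partial\Omega)$, the restrictions of $v^\pm[\nu]$ solve \eqref{auxtranpb1} with data $(0,0,0,0,0,\nu, v^+[\nu]_{|\cdots}, v^-[\nu]_{|\cdots})$ and the restrictions of $w^\pm[\nu]$ solve \eqref{auxtranpb1} with data $(0,0,0,0,-\nu,0, w^+[\nu]_{|\cdots}, w^-[\nu]_{|\cdots})$. Applying this with $\nu := \mu\circ(\Phi^T)^{(-1)}$ and unwinding \eqref{m1eqtp2}, the data on $[0,T]\times\partial\Omega$ pull back to $(0,0,0,\mu,\cdots)$ for the single layer and $(0,0,-\mu,0,\cdots)$ for the double layer, which already matches the fourth/third entries in \eqref{eqabspbm11}--\eqref{eqabspbm12}. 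Note the first two entries (the distributional right-hand sides $F^\pm$) vanish because $v^\pm[\nu]$ and $w^\pm[\nu]$ are caloric in $\Phi(\Omega_{\omega,\delta}^\pm)$, hence $A_{\Omega^\pm_{\omega,\delta}}[\Phi, V^\pm_\Phi[\mu]] = 0$ and likewise for $W$.

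It then remains only to identify the last two entries, i.e.\ the boundary traces on $[0,T]\times\Phi(\partial\Omega_{\omega,\delta}^\pm\setminus\partial\Omega)$, and to verify that after composing with $\Phi^T$ they become precisely $\mathcal V^\pm_{\delta,\Phi}[\mu]$ and $\mathcal W^\pm_{\delta,\Phi}[\mu]$. For the single layer, the trace is $x\mapsto v[\mu\circ(\Phi^T)^{(-1)}](t,\Phi(x))$ for $x \in \partial\Omega^\pm_{\omega,\delta}\setminus\partial\Omega$; writing out the defining integral over $\Phi(\partial\Omega)$ and applying the change-of-variables Lemma \ref{rajacon} (i) with the Jacobian factor $\tilde\sigma_n[\Phi_{|\partial\Omega}]$ turns $\int_{\Phi(\partial\Omega)} S_n(t-\tau, \Phi(x)-y)\,\mu((\Phi^T)^{(-1)}(\tau,y))\,d\sigma_y$ into $\int_{\partial\Omega} S_n(t-\tau,\Phi(x)-\Phi(y))\,\mu(\tau,y)\,\tilde\sigma_n[\Phi_{|\partial\Omega}](y)\,d\sigma_y$, which is exactly $\mathcal V^\pm_{\delta,\Phi}[\mu](t,x)$. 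For the double layer, the same change of variables applies, and one additionally uses that the outer normal to $\Phi(\partial\Omega)$ at the point $\Phi(y)$ equals $\nu_{\Phi_{|\partial\Omega}}(\Phi(y))$, so that $\partial_{\nu_{\Phi_{|\partial\Omega}}(y')} S_n(t-\tau, \Phi(x)-y')|_{y'=\Phi(y)} = -DS_n(t-\tau,\Phi(x)-\Phi(y))\cdot\nu_{\Phi_{|\partial\Omega}}(\Phi(y))$, yielding $\mathcal W^\pm_{\delta,\Phi}[\mu]$. One must also record that $\mathcal V^\pm_{\delta,\Phi}[\mu]$ and $\mathcal W^\pm_{\delta,\Phi}[\mu]$ indeed lie in the target spaces $C^{\frac{1+\alpha}{2};1+\alpha}_0([0,T]\times(\partial\Omega^\pm_{\omega,\delta}\setminus\partial\Omega))$, which follows from the mapping properties in Theorems \ref{pshp} (iv) and \ref{pdhp} (iv) (the kernels are evaluated away from the diagonal since $\Phi(\partial\Omega)$ and $\Phi(\partial\Omega^\pm_{\omega,\delta}\setminus\partial\Omega)$ are disjoint compact sets, so these are even smoother than needed) together with the regularity of $\mu$, $\Phi$, and $\tilde\sigma_n[\Phi_{|\partial\Omega}]$.

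I do not expect a genuine obstacle here: the theorem is, as the authors say, a ``straightforward consequence'' of the machinery already assembled. The only points requiring a little care are bookkeeping — matching the eight-tuple of data in \eqref{auxtranpb1} against the six-tuple in the definition of $R_\Phi$ via \eqref{m1eqtp2} (the two distributional slots $f_0^\pm, f_1^\pm$ collapse to the quotient-space entries $F^\pm$, which vanish here), and keeping the $\pm$ branches and the restriction-to-$\Phi(\partial\Omega^\pm_{\omega,\delta}\setminus\partial\Omega)$ bookkeeping straight. The mild ``hard part,'' if any, is being careful with the normal-field identity when pulling back the double layer kernel, but this is precisely Lemma \ref{rajacon} (ii) combined with the elementary chain-rule computation $\nu_{\Phi_{|\partial\Omega}}\circ\Phi = (D\Phi)^{-\top}\nu_\Omega / |(D\Phi)^{-\top}\nu_\Omega|$ already used in the proof of Theorem \ref{m1eqtp}; the density $\mu$ for the double layer is taken in $C^{\frac{1+\alpha}{2};1+\alpha}_0$ precisely so that $w^\pm[\mu\circ(\Phi^T)^{(-1)}]$ land in the right Schauder class.
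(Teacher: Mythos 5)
Your proposal is correct and follows exactly the route the paper indicates: the authors give no written proof, stating only that the theorem is a straightforward consequence of Theorem \ref{m1eqtp}, of Theorems \ref{pdhp} and \ref{pshp}, and of Lemma \ref{rajacon} (i), and your argument fills in precisely these steps (reduction via $R_\Phi^{(-1)}$ to the characterization of the layer potentials as solutions of \eqref{auxtranpb1}, identification of the data through the jump relations, and the change of variables for the traces on $\Phi(\partial\Omega^\pm_{\omega,\delta}\setminus\partial\Omega)$). The bookkeeping of signs ($G_1=\mu$ from $\partial_\nu v^+-\partial_\nu v^-=\mu$ and $G=-\mu$ from $w^+-w^-=-\mu$) and the normal-field identity for the double-layer kernel are handled correctly.
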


\section{Dependence of the heat layer potentials upon shape perturbations} \label{sec:main}

In this section we prove our main result.  That is, we prove that the maps $V_\phi$, $V_{l,\phi}$, $W_{*,\phi}$, and $W_\phi$  in 
\eqref{Vdef}--\eqref{Wdef} depend smoothly on the 
shape parameter $\phi$. To do so, we first use equalities \eqref{eqabspbm11} and \eqref{eqabspbm12} to show that the maps $\Phi\mapsto V^\pm_\Phi$ and $\Phi\mapsto W^\pm_\Phi$ are of class $C^\infty$. So we have to understand the regularity of the terms appearing in 
these equations, and we begin with   the map that takes $\Phi$ to $R_\Phi^{(-1)}$.

\begin{proposition}\label{Rphi}
Let  $T \in \mathopen ]0,+\infty[$, $\alpha\in \mathopen ]0,1[$. Let $\Omega$, $\omega$ and  
$\delta_\Omega$ be as in Lemma \ref{ext1}. Let $\delta \in \mathopen ]0,\delta_\Omega[$. The map that takes $\Phi\in C^{1,\alpha}(\overline{\Omega_{\omega,\delta}},\mathbb{R}^n) \cap \mathcal{A}'_{\overline{\Omega_{\omega,\delta}}}$ to 
\[
R_\Phi^{(-1)}\in\mathcal{L}\left(\mathcal{Z}\,,\;C^{\frac{1+\alpha}{2}; 1+\alpha}_0\left([0,T]\times\overline{\Omega_{\omega,\delta}^+}\right) \times  
	C^{\frac{1+\alpha}{2}; 1+\alpha}_0\left([0,T]\times\overline{\Omega_{\omega,\delta}^-}\right)\right)
\]
is real analytic. 
\end{proposition}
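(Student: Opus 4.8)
The plan is to factor $\Phi\mapsto R_\Phi^{(-1)}$ through the real analyticity of operator inversion. First I would recall that the set of invertible elements of a space $\mathcal{L}(X,Y)$ of bounded linear operators is open and that the inversion map $T\mapsto T^{(-1)}$ is real analytic there (see, e.g., Deimling \cite{De85}). By Theorem \ref{m1eqtp}~(ii), $R_\Phi$ is a linear homeomorphism for every $\Phi$ in $C^{1,\alpha}(\overline{\Omega_{\omega,\delta}},\mathbb{R}^n)\cap\mathcal{A}'_{\overline{\Omega_{\omega,\delta}}}$, so it will suffice to show that the map
\[
\Phi\longmapsto R_\Phi\in\mathcal{L}\left(C^{\frac{1+\alpha}{2};1+\alpha}_0\left([0,T]\times\overline{\Omega_{\omega,\delta}^+}\right)\times C^{\frac{1+\alpha}{2};1+\alpha}_0\left([0,T]\times\overline{\Omega_{\omega,\delta}^-}\right),\ \mathcal{Z}\right)
\]
is real analytic, and then compose with the inversion map. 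Since $\mathcal{Z}$ is a finite product, it is enough to check real analyticity into each factor separately, i.e.\ componentwise in the definition \eqref{defRm1} of $R_\Phi$.

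I would then go through the six components of $R_\Phi$. The three ``trace'' components $(U^+,U^-)\mapsto U^+_{|[0,T]\times\partial\Omega}-U^-_{|[0,T]\times\partial\Omega}$ and $(U^+,U^-)\mapsto U^\pm_{|[0,T]\times(\partial\Omega^\pm_{\omega,\delta}\setminus\partial\Omega)}$ are fixed bounded linear operators, hence define constant (a fortiori real analytic) maps of $\Phi$. For $A_{\Omega^\pm_{\omega,\delta}}[\Phi,\cdot]=\Pi\,B_{\Omega^\pm_{\omega,\delta}}[\Phi,\cdot]$, using that $\Pi$ is bounded linear (Lemma \ref{lem:quot}) and that $\Phi\mapsto\Phi_{|\overline{\Omega^\pm_{\omega,\delta}}}$ is bounded linear with values in $C^{1,\alpha}(\overline{\Omega^\pm_{\omega,\delta}},\mathbb{R}^n)\cap\mathcal{A}_{\overline{\Omega^\pm_{\omega,\delta}}}$ (restriction preserves injectivity of a map and of its differential), it suffices to prove that $\Phi\mapsto B_\Omega[\Phi,\cdot]$ is real analytic into the relevant operator space. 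In $B_\Omega[\Phi,u]=(-|\det D\Phi|\,u,\ (D\Phi)^{-1}(D\Phi)^{-\top}(Du)^\top|\det D\Phi|)$ the maps $u\mapsto u$ and $u\mapsto Du$ are $\Phi$-independent bounded linear operators, so the $\Phi$-dependence is concentrated in the coefficient maps $\Phi\mapsto|\det D\Phi|$ and $\Phi\mapsto(D\Phi)^{-1}(D\Phi)^{-\top}|\det D\Phi|$, with values in $C^{0,\alpha}(\overline{\Omega^\pm_{\omega,\delta}})$ and $C^{0,\alpha}(\overline{\Omega^\pm_{\omega,\delta}},\mathbb{R}^{n\times n})$ respectively. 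These are real analytic because $\Phi\mapsto D\Phi$ is bounded linear from $C^{1,\alpha}$ to $C^{0,\alpha}$, the determinant is a polynomial, matrix inversion is real analytic on invertible matrices, $|\det D\Phi|$ equals $\pm\det D\Phi$ near any fixed $\Phi_0$ (with locally constant sign on $\mathcal{A}'$), and the superposition operator attached to a real analytic map between finite-dimensional spaces is real analytic on the open set of Hölder functions whose range lies in its domain (Appendix \ref{appA}; cf.\ Lanza de Cristoforis and Rossi \cite{LaRo04, LaRo08}). Since pointwise multiplication is bounded bilinear on $C^{0,\alpha}$, the assignment of the multiplication operator to a (scalar or matrix) coefficient is bounded linear, and composing it with the real analytic coefficient maps and the fixed operators $u\mapsto u$, $u\mapsto Du$ yields the real analyticity of $\Phi\mapsto B_{\Omega^\pm_{\omega,\delta}}[\Phi,\cdot]$.

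Finally, for the component $\Phi\mapsto J_\Phi$ with $J_\Phi[U^+,U^-]=(DU^+-DU^-)(D\Phi)^{-1}{\bf n}[\Phi]$ on $[0,T]\times\partial\Omega$, I would invoke the identity ${\bf n}[\Phi]=\nu_{\Phi_{|\partial\Omega}}\circ\Phi_{|\partial\Omega}$ recalled in the proof of Theorem \ref{m1eqtp}, together with Lemma \ref{rajacon}~(ii), Lemma \ref{ext1}~(ii), and the boundedness of $\Phi\mapsto\Phi_{|\partial\Omega}$, to deduce that $\Phi\mapsto{\bf n}[\Phi]$, and hence $\Phi\mapsto(D\Phi)^{-1}{\bf n}[\Phi]$, is real analytic into $C^{0,\alpha}(\partial\Omega,\mathbb{R}^n)$; since $(U^+,U^-)\mapsto(DU^+-DU^-)_{|[0,T]\times\partial\Omega}$ is a fixed bounded linear operator and contraction against a time-independent $C^{0,\alpha}$ vector field is bounded, $\Phi\mapsto J_\Phi$ is real analytic. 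Collecting the six components gives the real analyticity of $\Phi\mapsto R_\Phi$, and composing with operator inversion proves the proposition. I expect the only delicate point to be the bookkeeping that keeps every product inside the correct anisotropic parabolic Hölder space and presents each term as a map into a space of operators; the single non-elementary ingredient is the real analyticity of superposition and pointwise-multiplication operators in the Hölder setting, precisely what Appendix \ref{appA} provides, after which the reduction above and the analyticity of inversion close the argument.
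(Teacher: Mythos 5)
Your proposal is correct and follows essentially the same route as the paper: reduce to the real analyticity of $\Phi\mapsto R_\Phi$ componentwise, using the real analyticity of matrix inversion on Schauder-class matrix fields (cf.\ \cite[Lemma 2.1]{LaRo04}) and of $\Phi\mapsto\mathbf{n}[\Phi]=\nu_{\Phi_{|\partial\Omega}}\circ\Phi_{|\partial\Omega}$ (Lemma \ref{rajacon}), and then compose with the real analytic inversion of bounded operators; you simply spell out the bookkeeping (traces, $\Pi$, the locally constant sign of $\det D\Phi$, bilinearity of pointwise multiplication) that the paper leaves implicit. The one small inaccuracy is attributing the real analyticity of superposition operators to Appendix \ref{appA}, which only provides $C^\infty$ results; what is actually needed here is only that $\Phi\mapsto\det D\Phi$ is a continuous polynomial map and that matrix inversion in Schauder spaces is real analytic, both covered by the \cite{LaRo04} citation.
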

\begin{proof} By the definition of $R_\Phi$ in \eqref{defRm1}, to prove that 
the map that takes $\Phi\in C^{1,\alpha}(\overline{\Omega_{\omega,\delta}},\mathbb{R}^n) \cap \mathcal{A}'_{\overline{\Omega_{\omega,\delta}}}$ to  
\[
R_\Phi\in\mathcal{L}\left(C^{\frac{1+\alpha}{2}; 1+\alpha}_0\left([0,T]\times\overline{\Omega_{\omega,\delta}^+}\right) \times  
	C^{\frac{1+\alpha}{2}; 1+\alpha}_0\left([0,T]\times\overline{\Omega_{\omega,\delta}^-}\right)\,,\;\mathcal{Z}\right)
\]
is real analytic it suffices to check that the maps that take  $\Phi$ to  
\[
A_{\Omega_{\omega,\delta}^\pm}[\Phi,\cdot]\in \mathcal{L}\left(C^{\frac{1+\alpha}{2}; 1+\alpha}_0\left([0,T]\times\overline{\Omega_{\omega,\delta}^\pm}\right)\,,\;\mathcal{X}_{\Omega_{\omega,\delta}^\pm}\right)
\] 
and to 
\[
J_\Phi\in \mathcal{L}\left(C^{\frac{1+\alpha}{2}; 1+\alpha}_0\left([0,T]\times\overline{\Omega_{\omega,\delta}^+}\right) \times  
	C^{\frac{1+\alpha}{2}; 1+\alpha}_0\left([0,T]\times\overline{\Omega_{\omega,\delta}^-}\right)\,,\;C_0^{\frac{\alpha}{2};\alpha}([0,T]\times\partial\Omega)\right)
\] 
are real analytic.  This follows from the real analyticity of the map that takes an invertible matrix with 
Schauder entries to its inverse (cf., e.g., \cite[Lemma 2.1]{LaRo04}, see also \cite{La91}) and from the real analyticity of the map that takes $\Phi$ to $\mathbf{n}[\Phi]=\nu_{\Phi_{|\partial \Omega}} \circ \Phi_{|\partial\Omega}$ (cf.~Lemma \ref{rajacon}). Then, to complete the proof of the proposition it suffices to remember that the set of invertible operators is open, the map that takes an invertible operator to its inverse is real analytic, and the composition of real analytic maps is real analytic (see, e.g., Hille and Phillips \cite[Theorems 4.3.2 and 4.3.4]{HiPh57} and Prodi and Ambrosetti \cite[Theorem 11.1]{PrAm73}). 
\end{proof}

We now turn to the maps $\Phi\mapsto\mathcal{V}^\pm_{\delta,\Phi}$ and $\Phi\mapsto\mathcal{W}^\pm_{\delta,\Phi}$ of Theorem 
 \ref{eqabspbm1}. 
\begin{proposition}\label{srslhp}
Let  $T \in \mathopen ]0,+\infty[$, $\alpha\in \mathopen ]0,1[$. Let $\Omega$, $\omega$ and  
$\delta_\Omega$ be as in Lemma \ref{ext1}. Let $\delta \in \mathopen ]0,\delta_\Omega[$. 
 Then the maps that take $\Phi\in C^{1,\alpha}(\overline{\Omega_{\omega,\delta}},\mathbb{R}^n) \cap \mathcal{A}'_{\overline{\Omega_{\omega,\delta}}}$ to
\[
\mathcal{V}^\pm_{\delta,\Phi}\in \mathcal{L}\left(C^{\frac{\alpha}{2};\alpha}_0\left([0,T]\times\partial\Omega\right)\,,\; C^{\frac{1+\alpha}{2};1+\alpha}_0\left([0,T]\times\big(\partial\Omega^\pm_{\omega,\delta}\setminus \partial \Omega\big)\right)\right)
\]
and to
\[
\mathcal{W}^\pm_{\delta,\Phi}\in \mathcal{L}\left(C^{\frac{1+\alpha}{2};1+\alpha}_0\left([0,T]\times\partial\Omega\right)\,,\; C^{\frac{1+\alpha}{2};1+\alpha}_0\left([0,T]\times\big(\partial\Omega^\pm_{\omega,\delta}\setminus \partial \Omega\big)\right)\right)
\]
are of class $C^\infty$.
\end{proposition}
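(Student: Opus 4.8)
Fix $\Phi_0 \in C^{1,\alpha}(\overline{\Omega_{\omega,\delta}},\mathbb{R}^n)\cap\mathcal{A}'_{\overline{\Omega_{\omega,\delta}}}$; by locality of the $C^\infty$ property it suffices to argue on a neighborhood of $\Phi_0$. The plan rests on the observation that, in contrast with the operators $V_\phi$, $W_\phi$, $V_{l,\phi}$, $W_{*,\phi}$, the kernels of $\mathcal{V}^\pm_{\delta,\Phi}$ and $\mathcal{W}^\pm_{\delta,\Phi}$ are \emph{non-singular}: the variable $x$ runs over the compact set $\partial\Omega^\pm_{\omega,\delta}\setminus\partial\Omega$, the variable $y$ over the compact set $\partial\Omega$, and these two sets are disjoint. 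Since every such $\Phi$ is injective on $\overline{\Omega_{\omega,\delta}}$ and $C^{1,\alpha}(\overline{\Omega_{\omega,\delta}},\mathbb{R}^n)\cap\mathcal{A}'_{\overline{\Omega_{\omega,\delta}}}$ is open, a compactness argument produces a neighborhood $\mathcal{U}$ of $\Phi_0$ and a constant $c>0$ with $|\Phi(x)-\Phi(y)|\ge c$ for all $\Phi\in\mathcal{U}$, all $x\in\partial\Omega^\pm_{\omega,\delta}\setminus\partial\Omega$ and all $y\in\partial\Omega$. On the region $\{(s,\xi):s\in\mathbb{R},\ |\xi|\ge c\}$ the function $S_n$ (extended by $0$ for $s\le 0$), together with all the spatial derivatives $D_xS_n$, is of class $C^\infty$ and vanishes with all its derivatives as $s\to 0^+$, so it enjoys the parabolic Schauder regularity needed below.

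Next I would isolate the $\Phi$-dependence. Writing
\[
\mathcal{V}^\pm_{\delta,\Phi}[\mu](t,x)=\int_0^t\int_{\partial\Omega} N^\pm_\Phi(t,x,\tau,y)\,\mu(\tau,y)\,d\sigma_y\,d\tau, \qquad N^\pm_\Phi(t,x,\tau,y):=S_n\big(t-\tau,\Phi(x)-\Phi(y)\big)\,\tilde\sigma_n[\Phi_{|\partial\Omega}](y),
\]
and similarly for $\mathcal{W}^\pm_{\delta,\Phi}$ with kernel $-\,DS_n\big(t-\tau,\Phi(x)-\Phi(y)\big)\cdot\big(\nu_{\Phi_{|\partial\Omega}}\circ\Phi_{|\partial\Omega}\big)(y)\,\tilde\sigma_n[\Phi_{|\partial\Omega}](y)$, one sees that the dependence on $\Phi$ factors through: (a) the restriction maps $\Phi\mapsto\Phi_{|\partial\Omega^\pm_{\omega,\delta}\setminus\partial\Omega}$ and $\Phi\mapsto\Phi_{|\partial\Omega}$, which are linear and continuous; (b) the maps $\Phi\mapsto\tilde\sigma_n[\Phi_{|\partial\Omega}]$ and $\Phi\mapsto\nu_{\Phi_{|\partial\Omega}}\circ\Phi_{|\partial\Omega}$, which are real analytic by Lemma \ref{rajacon}; and (c) the outer composition with $S_n$ (resp.\ $DS_n$) on the region $|\xi|\ge c$, which is a smooth operation by the first paragraph. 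Composing these, the technical lemmas on composition and integral operators collected in Appendix \ref{appA} give that $\Phi\mapsto N^\pm_\Phi$ is a $C^\infty$ map from $\mathcal{U}$ into a space of kernels possessing the required joint parabolic Schauder regularity in $(t,x)$, with parameters $(\tau,y)$.

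It then remains to pass from the kernel to the operator. Again by the integral-operator lemmas of Appendix \ref{appA}, if $\Phi\mapsto N^\pm_\Phi$ is $C^\infty$ with values in the appropriate kernel space, then
\[
\Phi\longmapsto\Big(\mu\mapsto\int_0^t\int_{\partial\Omega} N^\pm_\Phi(t,x,\tau,y)\,\mu(\tau,y)\,d\sigma_y\,d\tau\Big)
\]
is of class $C^\infty$ from $\mathcal{U}$ to $\mathcal{L}\big(C^{\frac{\alpha}{2};\alpha}_0([0,T]\times\partial\Omega),\,C^{\frac{1+\alpha}{2};1+\alpha}_0([0,T]\times(\partial\Omega^\pm_{\omega,\delta}\setminus\partial\Omega))\big)$, and analogously for $\mathcal{W}^\pm_{\delta,\Phi}$ with domain $C^{\frac{1+\alpha}{2};1+\alpha}_0([0,T]\times\partial\Omega)$. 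Here the smoothness of the kernel in all its variables, together with the vanishing of $S_n$ and of all its derivatives at $t-\tau=0$, is exactly what yields the gain of regularity (from $C^{\frac{\alpha}{2};\alpha}$ to $C^{\frac{1+\alpha}{2};1+\alpha}$) and membership in the subspaces with vanishing trace at $t=0$. Since $\Phi_0$ was arbitrary, the maps are globally of class $C^\infty$.

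The main difficulty lies in step (c) and the ensuing kernel estimates: one must verify that $\Phi\mapsto S_n(t-\tau,\Phi(x)-\Phi(y))\,\tilde\sigma_n[\Phi_{|\partial\Omega}]$ is infinitely Fr\'echet differentiable into a kernel space strong enough to feed the integral-operator lemma, which demands quantitative control of the mixed space--time derivatives of $S_n$ on $\{|\xi|\ge c\}$ and of their parabolic H\"older seminorms in $t$. It is precisely the $C^\infty$-but-not-real-analytic behaviour of $S_n$ in the time variable --- the radius of analyticity in $\xi$ degenerating as $t-\tau\to 0^+$ --- that obstructs upgrading the conclusion from $C^\infty$ to real analyticity, in contrast with the elliptic setting.
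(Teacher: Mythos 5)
Your proposal is correct and follows essentially the same route as the paper: exploit the disjointness of $\partial\Omega^\pm_{\omega,\delta}\setminus\partial\Omega$ and $\partial\Omega$ together with the injectivity of $\Phi$ so that the kernel is non-singular, apply the superposition-operator Lemma \ref{tdso} to $S_n$ (which is $C^\infty$ on $\mathopen]-\infty,T]\times(\mathbb{R}^n\setminus\{0\})$ and vanishes for $t\le 0$), invoke Lemma \ref{rajacon} for $\tilde\sigma_n[\cdot]$ and $\nu_{\Phi_{|\partial\Omega}}\circ\Phi_{|\partial\Omega}$, and conclude with the bilinear integral-operator Lemma \ref{tdio}. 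The only cosmetic difference is that you localize around $\Phi_0$ to get a uniform lower bound $|\Phi(x)-\Phi(y)|\ge c$, whereas the paper works directly with the open set $\{\varphi:\varphi(M)\subseteq\mathbb{R}^n\setminus\{0\}\}$ built into Lemma \ref{tdso}; your closing remark on the obstruction to real analyticity matches the paper's observation after the proposition.
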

\begin{proof}
We  verify the statement  for $\mathcal{V}^+_{\delta,\Phi}$. 
The map from $C^{1,\alpha}(\overline{\Omega_{\omega,\delta}},\mathbb{R}^n) \cap \mathcal{A}'_{\overline{\Omega_{\omega,\delta}}}$ to $C^{1,\alpha}((\partial\Omega^+_{\omega,\delta}\setminus \partial \Omega) \times \partial \Omega, \mathbb{R}^n \setminus\{0\})$ that takes $\Phi$ to the map $\Psi$ defined by 
\[
\Psi(x,y) := \Phi(x) - \Phi(y)  \qquad \forall (x,y) \in (\partial\Omega^+_{\omega,\delta}\setminus \partial \Omega) \times \partial \Omega,
\]
is linear and continuous and therefore of class $C^\infty$. Since the composition of 
two $C^\infty$ maps is of class $C^\infty$, Lemma \ref{tdso} of the Appendix on the regularity of a superposition operator implies that the map from 
$C^{1,\alpha}(\overline{\Omega_{\omega,\delta}},\mathbb{R}^n) \cap \mathcal{A}'_{\overline{\Omega_{\omega,\delta}}}$ 
to $C^{\frac{1+\alpha}{2};1+\alpha}_0([0,T]\times((\partial\Omega^+_{\omega,\delta}\setminus \partial \Omega) \times \partial\Omega))$ that takes 
$\Phi$ to the function defined by 
\[
S_n(t,\Phi(x) -\Phi(y)) \qquad \forall (t,x,y) \in [0,T]\times((\partial\Omega^+_{\omega,\delta}\setminus \partial \Omega) \times \partial\Omega)
\]
is of class $C^\infty$.  Finally, Lemma \ref{rajacon} on the real analyticity of $\tilde \sigma_n[\cdot]$ and Lemma 
\ref{tdio} of the Appendix on the linearity and continuity of a time dependent integral operator, imply the validity of the statement for $\mathcal{V}^+_{\delta,\Phi}$. The proof of the statement for $\mathcal{V}^-_{\delta,\Phi}$ and for $\mathcal{W}^\pm_{\delta,\Phi}$ are very similar and therefore omitted.
\end{proof}

We observe that, in the proof of Proposition \ref{srslhp}, it is the regularity of the fundamental solution $S_n$ to prevent $\Phi\mapsto\mathcal{V}^\pm_{\delta,\Phi}$ and $\Phi\mapsto\mathcal{W}^\pm_{\delta,\Phi}$ from being real analytic. Indeed, for $\xi\neq 0$ the function $t\mapsto S_n(t,\xi)$ belongs to $C^\infty(\mathbb{R})$ but is not real analytic. 

We can now go back to the maps $\Phi\mapsto V^\pm_\Phi$ and $\Phi\mapsto W^\pm_\Phi$ and prove that they are smooth.

\begin{theorem}\label{sslhp}
Let  $T \in \mathopen ]0,+\infty[$, $\alpha\in \mathopen ]0,1[$. Let $\Omega$, $\omega$ and  
$\delta_\Omega$ be as in Lemma \ref{ext1}. Let $\delta \in \mathopen ]0,\delta_\Omega[$.  Then the maps that take $\Phi\in C^{1,\alpha}(\overline{ \Omega_{\omega,\delta}},\mathbb{R}^n) \cap \mathcal{A}'_{\overline{\Omega_{\omega,\delta}}}$ to 
 \[
 V^\pm_\Phi\in\mathcal{L}\left(C^{\frac{\alpha}{2};\alpha}_0([0,T]\times\partial\Omega)\,,\;C^{\frac{1+\alpha}{2};1+\alpha}_0\Big([0,T]\times\overline{\Omega^\pm_{\omega,\delta}}\Big) \right)
 \]
 and to 
 \[
 W^\pm_\Phi\in\mathcal{L}\left(C^{\frac{1+\alpha}{2};1+\alpha}_0([0,T]\times\partial\Omega)\,,\;C^{\frac{1+\alpha}{2};1+\alpha}_0\Big([0,T]\times\overline{\Omega^\pm_{\omega,\delta}}\Big) \right)
 \]
 are of class $C^\infty$.
\end{theorem}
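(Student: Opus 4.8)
The plan is to read $V^\pm_\Phi$ and $W^\pm_\Phi$ off the identities \eqref{eqabspbm11} and \eqref{eqabspbm12} of Theorem \ref{eqabspbm1}, and then assemble the claimed smoothness from the regularity results already established in Propositions \ref{Rphi} and \ref{srslhp}, together with the elementary fact that the composition of bounded linear operators is a continuous bilinear (hence real analytic, hence $C^\infty$) operation.

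First I would recast \eqref{eqabspbm11} as an operator identity. For $\Phi\in C^{1,\alpha}(\overline{\Omega_{\omega,\delta}},\mathbb{R}^n) \cap \mathcal{A}'_{\overline{\Omega_{\omega,\delta}}}$, let $\Lambda^V_\Phi$ denote the map that takes $\mu\in C^{\frac{\alpha}{2};\alpha}_0([0,T]\times\partial\Omega)$ to the element $(0,0,0,\mu,\mathcal{V}^+_{\delta,\Phi}[\mu],\mathcal{V}^-_{\delta,\Phi}[\mu])$ of $\mathcal{Z}$. Since $\mu\mapsto\mu$ lands in the fourth factor $C_0^{\frac{\alpha}{2};\alpha}([0,T]\times\partial\Omega)$ of $\mathcal{Z}$ and, by Proposition \ref{srslhp}, $\mathcal{V}^\pm_{\delta,\Phi}$ maps $C^{\frac{\alpha}{2};\alpha}_0([0,T]\times\partial\Omega)$ linearly and continuously into $C^{\frac{1+\alpha}{2};1+\alpha}_0([0,T]\times(\partial\Omega^\pm_{\omega,\delta}\setminus \partial \Omega))$, the operator $\Lambda^V_\Phi$ is a well-defined element of $\mathcal{L}(C^{\frac{\alpha}{2};\alpha}_0([0,T]\times\partial\Omega),\mathcal{Z})$, and \eqref{eqabspbm11} becomes the operator identity $(V^+_\Phi,V^-_\Phi)=R_\Phi^{(-1)}\circ\Lambda^V_\Phi$. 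Analogously, \eqref{eqabspbm12} becomes $(W^+_\Phi,W^-_\Phi)=R_\Phi^{(-1)}\circ\Lambda^W_\Phi$, where $\Lambda^W_\Phi\in\mathcal{L}(C^{\frac{1+\alpha}{2};1+\alpha}_0([0,T]\times\partial\Omega),\mathcal{Z})$ sends $\mu$ to $(0,0,-\mu,0,\mathcal{W}^+_{\delta,\Phi}[\mu],\mathcal{W}^-_{\delta,\Phi}[\mu])$; note that here $-\mu$ must land in the third factor $C_0^{\frac{1+\alpha}{2};1+\alpha}([0,T]\times\partial\Omega)$ of $\mathcal{Z}$, which is exactly why the double layer operators are posed on the smaller density space.

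Next I would observe that $\Phi\mapsto\Lambda^V_\Phi$ and $\Phi\mapsto\Lambda^W_\Phi$ are of class $C^\infty$. Indeed, a map into a finite product of Banach spaces is $C^\infty$ if and only if each component map is; the components that output $0$ or a fixed multiple of the identity on the density space are constant in $\Phi$, hence trivially $C^\infty$, while the remaining components $\Phi\mapsto\mathcal{V}^\pm_{\delta,\Phi}$ and $\Phi\mapsto\mathcal{W}^\pm_{\delta,\Phi}$ are $C^\infty$ by Proposition \ref{srslhp}. On the other hand, $\Phi\mapsto R_\Phi^{(-1)}$ is real analytic, hence $C^\infty$, by Proposition \ref{Rphi}. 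Since the composition map $(S,T)\mapsto S\circ T$ from $\mathcal{L}(\mathcal{Z},\mathcal{Y})\times\mathcal{L}(\mathcal{U},\mathcal{Z})$ to $\mathcal{L}(\mathcal{U},\mathcal{Y})$ is bounded bilinear and therefore real analytic, and since the composition of $C^\infty$ maps is $C^\infty$, the maps $\Phi\mapsto R_\Phi^{(-1)}\circ\Lambda^V_\Phi$ and $\Phi\mapsto R_\Phi^{(-1)}\circ\Lambda^W_\Phi$ are $C^\infty$ into $\mathcal{L}\big(\cdot\,,\,C^{\frac{1+\alpha}{2};1+\alpha}_0([0,T]\times\overline{\Omega^+_{\omega,\delta}})\times C^{\frac{1+\alpha}{2};1+\alpha}_0([0,T]\times\overline{\Omega^-_{\omega,\delta}})\big)$. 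Composing on the left with the bounded linear projections onto the first and second factors is again $C^\infty$, so $\Phi\mapsto V^\pm_\Phi$ and $\Phi\mapsto W^\pm_\Phi$ are $C^\infty$, as claimed.

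The substantive content of the argument is entirely contained in Propositions \ref{Rphi} and \ref{srslhp}, so I do not expect any serious obstacle here; the step that will require the most care is merely the bookkeeping of the target spaces, namely checking that the data tuples in \eqref{eqabspbm11} and \eqref{eqabspbm12} genuinely belong to $\mathcal{Z}$ with the regularity $\mu$ is assumed to have, and recalling the standard facts that real analyticity implies $C^\infty$, that bounded multilinear maps between Banach spaces are $C^\infty$, and that $C^\infty$ maps compose.
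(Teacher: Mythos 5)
Your proposal is correct and follows exactly the route the paper takes: it reads $(V^\pm_\Phi)$ and $(W^\pm_\Phi)$ off the identities \eqref{eqabspbm11} and \eqref{eqabspbm12}, invokes Proposition \ref{Rphi} for the real analyticity of $\Phi\mapsto R_\Phi^{(-1)}$ and Proposition \ref{srslhp} for the smoothness of $\Phi\mapsto\mathcal{V}^\pm_{\delta,\Phi}$ and $\Phi\mapsto\mathcal{W}^\pm_{\delta,\Phi}$, and concludes via the continuity (bilinearity) of operator composition and the fact that compositions of $C^\infty$ maps are $C^\infty$. The paper states this in one sentence; your write-up is simply a more explicit version of the same argument, with the bookkeeping of the factors of $\mathcal{Z}$ carried out correctly.
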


\begin{proof}
It follows from equalities \eqref{eqabspbm11} and \eqref{eqabspbm12}, from Propositions \ref{Rphi} and \ref{srslhp}, and because the composition of a real analytic map with a $C^\infty$ map is of class $C^\infty$.
\end{proof}

As a corollary of Theorem \ref{sslhp}, we are now ready to prove our main result.

\begin{theorem}\label{thm:main}
Let   $\alpha \in \mathopen ]0,1[$, $T \in \mathopen ]0,+\infty[$.  
Let $\Omega$ be a bounded open subset of $\mathbb{R}^n$ of class $C^{1,\alpha}$ such that both 
$\Omega$ and $\Omega^-$ are connected. Then the maps that take $\phi\in C^{1,\alpha}(\partial\Omega,\mathbb{R}^n) \cap \mathcal{A}_{\partial\Omega}$ to 
\[
\begin{aligned}
&V_\phi\in\mathcal{L}\left(C^{\frac{\alpha}{2};\alpha}_0([0,T]\times\partial\Omega)\,,\;C^{\frac{1+\alpha}{2};1+\alpha}_0([0,T]\times\partial\Omega)\right)\,,\\
&V_{l,\phi}\in\mathcal{L}\left(C^{\frac{\alpha}{2};\alpha}_0([0,T]\times\partial\Omega)\,,\;C^{\frac{\alpha}{2};\alpha}_0([0,T]\times\partial\Omega)\right) \quad\text{with }l\in\{1,\dots,n\}\,,\\
&W_{*,\phi}\in\mathcal{L}\left(C^{\frac{\alpha}{2};\alpha}_0([0,T]\times\partial\Omega)\,,\;C^{\frac{\alpha}{2};\alpha}_0([0,T]\times\partial\Omega)\right)\,,\\
 &W_\phi\in\mathcal{L}\left(C^{\frac{1+\alpha}{2};1+\alpha}_0([0,T]\times\partial\Omega)\,,\;C^{\frac{1+\alpha}{2};1+\alpha}_0([0,T]\times\partial\Omega)\right) 
\end{aligned}
 \]
 are of class $C^\infty$.
 \end{theorem}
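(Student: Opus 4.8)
The plan is to localize the statement at an arbitrary $\phi_0\in C^{1,\alpha}(\partial\Omega,\mathbb{R}^n)\cap\mathcal{A}_{\partial\Omega}$, to extend $\phi_0$ to a collar $\Omega_{\omega,\delta_0}$ by means of the real analytic extension operator of Lemma~\ref{ext2}, and then to reduce everything to Theorem~\ref{sslhp} by using the jump relations of Theorems~\ref{pshp} and~\ref{pdhp}. Since being of class $C^\infty$ is a local property, it suffices to prove the four assertions on a neighborhood of $\phi_0$. So I would first apply Lemma~\ref{ext1} to produce $\omega$ and $\delta_\Omega$, and then Lemma~\ref{ext2} to produce $\delta_0\in\mathopen]0,\delta_\Omega[$, an open neighborhood $\mathcal{W}_0$ of $\phi_0$ in $C^{1,\alpha}(\partial\Omega,\mathbb{R}^n)\cap\mathcal{A}_{\partial\Omega}$, and a real analytic extension operator $\mathbf{E}$ from $C^{1,\alpha}(\partial\Omega,\mathbb{R}^n)$ to $C^{1,\alpha}(\overline{\Omega_{\omega,\delta_0}},\mathbb{R}^n)$ which maps $\mathcal{W}_0$ into $C^{1,\alpha}(\overline{\Omega_{\omega,\delta_0}},\mathbb{R}^n)\cap\mathcal{A}'_{\overline{\Omega_{\omega,\delta_0}}}$ and satisfies $\mathbf{E}[\phi]_{|\partial\Omega}=\phi$ for every $\phi\in\mathcal{W}_0$. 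Below I set $\Phi:=\mathbf{E}[\phi]$, and I recall that $\partial\Omega\subseteq\overline{\Omega_{\omega,\delta_0}^+}$ by Lemma~\ref{ext1}.

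The second step is to rewrite the four boundary operators, for $\phi\in\mathcal{W}_0$, in terms of the maps $V^\pm_\Phi$ and $W^\pm_\Phi$ of Theorem~\ref{eqabspbm1} composed with elementary operators. Since the single layer heat potential is continuous across its support (Theorem~\ref{pshp}(ii)), taking the trace of $V^+_\Phi[\mu]=v^+[\mu\circ(\Phi^T)^{(-1)}]\circ\Phi^T$ on $[0,T]\times\partial\Omega$ gives $V_\phi[\mu]=V^+_\Phi[\mu]_{|[0,T]\times\partial\Omega}$. Differentiating the same identity in the space variables, the chain rule gives $D_x\bigl(V^+_\Phi[\mu]\bigr)=\bigl((D_xv^+[\mu\circ(\Phi^T)^{(-1)}])\circ\Phi^T\bigr)\,D\Phi$, so the jump relation for $\partial_{x_l}v^\pm$ in Theorem~\ref{pshp}(iii), read on $\phi(\partial\Omega)$, yields
\[
V_{l,\phi}[\mu]=\sum_{i=1}^{n}\Bigl(\bigl[(D\Phi)^{-1}\bigr]_{il}\Bigr)_{|\partial\Omega}\,\bigl(\partial_{x_i}V^+_\Phi[\mu]\bigr)_{|[0,T]\times\partial\Omega}-\frac{1}{2}\,(\nu_\phi\circ\phi)_l\,\mu .
\]
Pulling the $(\tau,y)$-independent factor $\nu_\phi$ out of the integral in~\eqref{V*def} gives $W_{*,\phi}[\mu]=\sum_{l=1}^{n}(\nu_\phi\circ\phi)_l\,V_{l,\phi}[\mu]$. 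Finally, writing the kernel of the double layer as a normal derivative of $S_n$ and invoking the jump relation of Theorem~\ref{pdhp}(iii) one obtains $W_\phi[\mu]=W^+_\Phi[\mu]_{|[0,T]\times\partial\Omega}+\frac{1}{2}\,\mu$.

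The third step is to read off the regularity from these formulas. The map $\phi\mapsto\Phi=\mathbf{E}[\phi]$ is real analytic by Lemma~\ref{ext2}, hence of class $C^\infty$, on $\mathcal{W}_0$; the maps $\Phi\mapsto V^\pm_\Phi$ and $\Phi\mapsto W^\pm_\Phi$ are of class $C^\infty$ by Theorem~\ref{sslhp}; the trace/restriction operators onto $[0,T]\times\partial\Omega$, the spatial derivatives $u\mapsto\partial_{x_i}u$ (linear and continuous by the very definition of the parabolic Schauder norms), and the restriction of $D\Phi$ to $\partial\Omega$ are linear and continuous; the inversion $\Phi\mapsto(D\Phi)^{-1}$ of a matrix with Schauder entries is real analytic (see \cite[Lemma~2.1]{LaRo04}); the map $\phi\mapsto\nu_\phi\circ\phi$ is real analytic by Lemma~\ref{rajacon}(ii); and pointwise multiplication by a time-independent $C^{0,\alpha}(\partial\Omega)$-function is a bounded bilinear operation on the relevant parabolic Schauder spaces, preserving the vanishing at $t=0$. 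Since sums, products and compositions of maps of class $C^\infty$ (in particular of real analytic maps) are again of class $C^\infty$, the four operator-valued maps are of class $C^\infty$ on $\mathcal{W}_0$; as $\phi_0$ was arbitrary, the theorem follows.

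I expect the only delicate point to be the bookkeeping in the second step: tracking correctly the $\pm\frac{1}{2}$ jump contributions and the $(D\Phi)^{-1}$ factor produced by the pullback of the spatial gradient, and verifying that each auxiliary operator (trace onto $[0,T]\times\partial\Omega$, spatial derivatives, restriction of $D\mathbf{E}[\phi]$, multiplication by $C^{0,\alpha}$-functions) maps between exactly the parabolic Schauder spaces appearing in the statement. The genuine analytic content --- the smooth dependence of the potentials on the shape parameter --- has already been isolated in Theorem~\ref{sslhp}, so what remains is essentially algebra together with elementary mapping properties.
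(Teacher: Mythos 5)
Your proposal is correct and follows essentially the same route as the paper's proof: localize via the extension operator of Lemma \ref{ext2}, express $V_\phi$, $V_{l,\phi}$, $W_{*,\phi}$, $W_\phi$ through $V^\pm_{\mathbf{E}[\phi]}$ and $W^\pm_{\mathbf{E}[\phi]}$ using the jump relations and the pullback of the gradient by $(D\mathbf{E}[\phi])^{-1}$, and conclude by Theorem \ref{sslhp} together with standard calculus in Banach spaces. The only cosmetic difference is that you obtain $W_{*,\phi}$ as $\sum_l(\nu_\phi\circ\phi)_l V_{l,\phi}$ rather than writing the formula directly, which is equivalent.
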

 
 \begin{proof}
It clearly suffices to show that the maps in the statement are of class $C^\infty$ in a neighborhood of a function   
 $\phi_0\in C^{1,\alpha}(\partial\Omega,\mathbb{R}^n) \cap \mathcal{A}_{\partial\Omega}$. By the definitions \eqref{Vdef}--\eqref{Wdef}, by the jump relations of the layer potentials of Theorems \ref{pdhp} and \ref{pshp}, and by the 
 extension result of Lemma \ref{ext2}, there exists an open neighborhood $\mathcal{W}_0$ of $\phi_0$ in 
 $C^{1,\alpha}(\partial\Omega,\mathbb{R}^n) \cap \mathcal{A}_{\partial\Omega}$ such that 
 \begin{align*}
 V_\phi[\mu] &= v^+[\mu \circ (\phi^T)^{(-1)}] \circ \phi^T = V^+_{\mathbf{E}[\phi]}[\mu]\,,\\
 V_{l,\phi}[\mu] &= -\frac{\mathbf{n}_l[\mathbf{E}[\phi]]}{2}\mu+ \frac{\partial}{\partial x_l}(v^+[\mu \circ (\phi^T)^{(-1)}]) \circ \phi^T \\
 & =-\frac{\mathbf{n}_l[\mathbf{E}[\phi]]}{2}\mu+ ( (DV^+_{\mathbf{E}[\phi]}[\mu]) \cdot(D\mathbf{E}[\phi])^{-1})_l\,,\\
  W_{*,\phi}[\mu] &= -\frac{1}{2}\mu+ ((Dv^+[\mu \circ (\phi^T)^{(-1)}]\circ \phi^T) \nu_\phi\circ \phi \\
 & =-\frac{1}{2}\mu+ ( (DV^+_{\mathbf{E}[\phi]}[\mu]) \cdot(D\mathbf{E}[\phi])^{-1}) \cdot \mathbf{n}[\mathbf{E}[\phi]]
 \end{align*}
for all $\phi\in\mathcal{W}_0$ and all  $\mu\in  C^{\frac{\alpha}{2};\alpha}_0([0,T]\times\partial\Omega)$,  and such that  
\[
W_\phi[\mu] = \frac{1}{2}\mu + W^+_{\mathbf{E}[\phi]}[\mu]
\]
for all $\phi\in\mathcal{W}_0$ and all  $\mu \in  C^{\frac{1+\alpha}{2};1+\alpha}_0([0,T] \times \partial\Omega)$. Thus, the statement follows by Theorem \ref{sslhp}, by Lemma \ref{ext2}, and by standard calculus in Banach spaces. 
  \end{proof}
 %%%%%%%%%%%%%%%%%%%%%%%%%%%%%%%%%%%%%%%%%%%%%%%%
 %%%%%%%%%%%%%%%%%%%%%%%%%%%%%%%%%%%%%%%%%%%%%%%%%
  
%%%%%%%%%%%%%%%%%%%%%%%%%%%%%%%%%%%%%%%%%%%%%%%%%%%%%%%%
%%%%%%%%%%%%%%%%%%%%%%%%%%%%%%%%%%%%%%%%%%%%%%%%%%%%%%%%
%%%%%%%%%%%%%%%%%%%%%%%%%%%%%%%%%%%%%%%%%%%%%%%%%%%%%%%%

 \begin{appendices}
\section{}\label{appA}
In this appendix we collect a few auxiliary and technical results on the regularity of certain composition and nonlinear time-dependent integral operators.
First of all, we introduce some notation and some definitions. 
Let $ n, s \in \mathbb{N} \setminus \{0\}$, $\alpha \in \mathopen ]0,1[$, $1 \leq s \leq n$.   
We set $\mathbb{B}_s :=\{x \in \mathbb{R}^s:|x|<1\}$.
We say that a subset $M$ of $\mathbb{R}^n$ is a differential manifold (or simply a manifold) of dimension $s$ and 
of class $C^{1,\alpha}$ imbedded in 
$\mathbb{R}^n$ if, for every $P \in M$, there exist a neighborhood $W$ of $P$ in $\mathbb{R}^n$ and a parametrization 
$\psi \in C^{1,\alpha}(\overline{\mathbb{B}_s}, \mathbb{R}^n)$ such that $\psi$ is a homeomorphism 
of $\mathbb{B}_s$ onto $W \cap M$, $\psi(0) = P$, and $D \psi$ has rank $s$ at all  points of 
$\overline{ \mathbb{B}_s}$. If we further assume that $M$ is compact, then there exist
$P_1,\ldots, P_r \in M$  and parametrizations $\{\psi_i\}_{i=1,\ldots,r}$  with 
$\psi_i \in C^{1,\alpha}(\overline{\mathbb{B}_s}, \mathbb{R}^n)$ such that $\bigcup_{i=1}^r\psi_i(\mathbb{B}_s)=M$.
%That is,   
%\begin{itemize}
%\item $\theta_i \in C^\infty_c(\mathbb{R}^n)$
%\item $\theta_i \geq 0$ for all $i=1,\ldots,r$,
%\item   $\sum_{i=1}^r \theta_i(x) = 1$   for all  $x \in M$,  
%\item $M \cap \mathrm{supp}\,\theta_i \subseteq \psi_i(\mathbb{B}_s)$ for all  $i=1,\ldots,r$.
%\end{itemize}

Let $M$ be a  compact manifold of dimension $s$ and 
of class $C^{1,\alpha}$ imbedded in $\mathbb{R}^n$. We can use the local parametrizations to 
define the Banach space $C^{1,\alpha}(M)$ (see,  e.g., Lanza de Cristoforis and Rossi \cite[p. 142]{LaRo04}). 
Similarly, we can define the  parabolic counterpart of $C^{1,\alpha}(M)$. Let $T \in \mathopen ]0,+\infty[$. 
The space $C^{\frac{1+\alpha}{2};1+\alpha}([0,T] \times M)$
is the space of functions $f$ from $[0,T] \times M$ to $\mathbb{R}$ such that 
 \[
 f \circ  \psi_i^T \in C^{\frac{1+\alpha}{2};1+\alpha}([0,T] \times\overline{\mathbb{B}_s}) \qquad \forall i=1,\ldots,r,
 \]
where $\{\psi_i\}_{i=1,\ldots,r}$ is a parametrization of $M$. On   
$C^{\frac{1+\alpha}{2};1+\alpha}([0,T] \times M)$ we define a norm by setting
\[
\|f\|_{C^{\frac{1+\alpha}{2};1+\alpha}([0,T] \times M)} 
:= \sup_{i=1,\ldots,r}
\| f \circ \psi_i^T \|_{C^{\frac{1+\alpha}{2};1+\alpha}([0,T] \times\overline{\mathbb{B}_s})} 
\qquad \forall f \in C^{\frac{1+\alpha}{2};1+\alpha}([0,T] \times M).
\]
We can  verify that, with a different finite family of parametrizations of $M$, we  obtain an 
equivalent norm. Moreover,
$C^{\frac{1+\alpha}{2};1+\alpha}([0,T] \times M)$ with the norm $\|\cdot\|_{C^{\frac{1+\alpha}{2};1+\alpha}([0,T] \times M)}$
 is a Banach space. Then
\[
C^{\frac{1+\alpha}{2};1+\alpha}_0([0,T] \times M) := \left\{  f \in C^{\frac{1+\alpha}{2};1+\alpha}([0,T] \times M) :  
 f(0,x) = 0 \,\,\,  \forall x \in  M\right\},
\]
is  a Banach subspace of $C^{\frac{1+\alpha}{2};1+\alpha}([0,T] \times M)$.
The regularity of maps with values in $C^{\frac{1+\alpha}{2};1+\alpha}([0,T] \times M)$ can be described using the local parametrization. 
More precisely, we have the following lemma, which can be proved by exploiting the definition of norm in the spaces $C^{\frac{1+\alpha}{2};1+\alpha}$.

\begin{lemma}\label{lempar}
Let $\mathcal{X}$ be a Banach space, and let $\mathcal{O}$ be an open subset of $\mathcal{X}$.  
Let    $\alpha \in \mathopen ]0,1[$.  Let $T \in \mathopen ]0,+\infty[$. Let $M$ be a compact manifold of dimension 
$1 \leq s \leq n$ of class $C^{1,\alpha}$ imbedded in $\mathbb{R}^n$.  Let $N$ be a map from 
$\mathcal{O}$ to $C^{\frac{1+\alpha}{2};1+\alpha}([0,T] \times M)$. Let $\{\psi_i\}_{i=1,\ldots,r}$ be a parametrization of $M$. 
For $i \in \{1,\ldots,r\}$, let $C_{\psi_i}$ be the composition operator from $C^{\frac{1+\alpha}{2};1+\alpha}([0,T] \times M)$ 
to $C^{\frac{1+\alpha}{2};1+\alpha}([0,T] \times \overline{\mathbb{B}_s})$ defined by 
\[
C_{\psi_i}[f] := f \circ  \psi_i^T \qquad \forall f \in C^{\frac{1+\alpha}{2};1+\alpha}([0,T] \times M).
\]
Let $h \in \mathbb{N} \cup \{\infty\}$. Then $N$ is of class $C^h$ if and only if the operator  $C_{\psi_i} \circ N$ is of class 
$C^h$ for all $i=1,\ldots,r$.
\end{lemma}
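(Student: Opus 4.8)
The plan is to read off the equivalence directly from the way the norm of $C^{\frac{1+\alpha}{2};1+\alpha}([0,T]\times M)$ is constructed, namely as the maximum over $i=1,\dots,r$ of the norms of the pull-backs $f\circ\psi_i^T$.

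First I would observe that each composition operator $C_{\psi_i}$ is linear and satisfies $\|C_{\psi_i}[f]\|_{C^{\frac{1+\alpha}{2};1+\alpha}([0,T]\times\overline{\mathbb{B}_s})}\le\|f\|_{C^{\frac{1+\alpha}{2};1+\alpha}([0,T]\times M)}$ by the very definition of the norm on the target space; hence $C_{\psi_i}$ is a bounded linear operator and, in particular, is of class $C^\infty$. This yields the ``only if'' implication at once: if $N$ is of class $C^h$, then $C_{\psi_i}\circ N$ is the composition of a map of class $C^h$ with a (linear, hence) $C^\infty$ map, and is therefore of class $C^h$, for every $i$.

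For the converse, I would introduce the map $\Lambda:=(C_{\psi_1},\dots,C_{\psi_r})$ from $C^{\frac{1+\alpha}{2};1+\alpha}([0,T]\times M)$ to the finite product $\prod_{i=1}^{r}C^{\frac{1+\alpha}{2};1+\alpha}([0,T]\times\overline{\mathbb{B}_s})$, endowed with the max-norm. By the definition of $\|\cdot\|_{C^{\frac{1+\alpha}{2};1+\alpha}([0,T]\times M)}$, the operator $\Lambda$ is a linear isometry onto its range $\mathcal{R}$; hence $\Lambda^{-1}\colon\mathcal{R}\to C^{\frac{1+\alpha}{2};1+\alpha}([0,T]\times M)$ is again a (bounded linear, hence $C^\infty$) isometry. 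Now assume that $C_{\psi_i}\circ N$ is of class $C^h$ for every $i$. Since a map into a finite product of normed spaces is of class $C^h$ precisely when all its components are, the map $\Lambda\circ N$ is of class $C^h$; as it takes values in $\mathcal{R}$, composing on the left with $\Lambda^{-1}$ gives that $N=\Lambda^{-1}\circ(\Lambda\circ N)$ is of class $C^h$, as desired.

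I do not expect any genuine obstacle here. The single point deserving a line of attention is that the family $\{C_{\psi_i}\}_{i=1,\dots,r}$ must reconstruct an element of $C^{\frac{1+\alpha}{2};1+\alpha}([0,T]\times M)$ from its pieces in a bounded way, i.e.\ that $\Lambda$ is bounded below; but this is exactly what the definition of the norm as a maximum over the parametrizations guarantees, and it is also the reason why passing to a different finite family of parametrizations changes the norm only up to equivalence. The same argument applies verbatim to the spaces $C^{\frac{m+\alpha}{2};m+\alpha}$ with $m=0,1$ and to their subspaces of functions vanishing at $t=0$.
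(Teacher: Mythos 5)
Your proof is correct and follows exactly the route the paper intends: the paper omits the argument, noting only that the lemma "can be proved by exploiting the definition of norm in the spaces $C^{\frac{1+\alpha}{2};1+\alpha}$", and your isometry $\Lambda=(C_{\psi_1},\dots,C_{\psi_r})$ onto its (closed, by completeness) range is precisely the clean way to make that remark rigorous in both directions.
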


Next, we turn to  a time dependent integral operator with kernel in a parabolic Schauder space and summable 
density function. 

\begin{lemma}\label{tdio}
Let $n_1, n_2, s_1, s_2 \in \mathbb{N}$,   $1 \leq s_1 \leq n_1$, $1 \leq s_2 \leq n_2$, 
$\alpha \in \mathopen ]0,1[$.  Let $T \in \mathopen ]0,+\infty[$. Let $M_1$, $M_2$ be two compact manifolds of dimension $s_1$, $s_2$ and 
of class $C^{1,\alpha}$  imbedded in $\mathbb{R}^{n_1}$, $\mathbb{R}^{n_2}$, 
respectively. Then the bilinear map $K$ from 
$C^{\frac{1+\alpha}{2};1+\alpha}_0\big([0,T] \times (M_1\times M_2)\big) \times L^1\big([0,T] \times M_2\big)$ to 
$C^{\frac{1+\alpha}{2};1+\alpha}_0\big([0,T] \times {M_1}\big)$   defined by 
\[
K[G,f](t,x) := \int_0^t \int_{M_2} G(t-\tau, x,y)f(\tau,y)\, d\sigma_yd\tau \quad \forall (t,x) \in [0,T] \times{M_1},
\]
for all $(G,f) \in C^{\frac{1+\alpha}{2};1+\alpha}_0\big([0,T] \times (M_1\times M_2)\big) \times L^1\big([0,T] \times M_2\big)$ is continuous.
\end{lemma}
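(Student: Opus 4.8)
The plan is to use bilinearity: a bilinear map between normed spaces is continuous if and only if it is bounded, so it suffices to exhibit a constant $C>0$, depending only on $n_1,n_2,s_1,s_2,\alpha,T,M_1,M_2$, such that
\[
\|K[G,f]\|_{C^{\frac{1+\alpha}{2};1+\alpha}_0([0,T]\times M_1)}\le C\,\|G\|_{C^{\frac{1+\alpha}{2};1+\alpha}_0([0,T]\times(M_1\times M_2))}\,\|f\|_{L^1([0,T]\times M_2)}
\]
for all admissible $(G,f)$. First I would reduce to a flat situation in the $M_1$ variable: by Lemma \ref{lempar} and the definition of the Schauder norm on $[0,T]\times M_1$ through a fixed finite family $\{\psi_i\}_i$ of parametrizations of $M_1$, it is enough to bound each $K[G,f]\circ\psi_i^T$ in $C^{\frac{1+\alpha}{2};1+\alpha}([0,T]\times\overline{\mathbb{B}_{s_1}})$. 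Since
\[
K[G,f]\circ\psi_i^T(t,\xi)=\int_0^t\int_{M_2}G\big(t-\tau,\psi_i(\xi),y\big)f(\tau,y)\,d\sigma_yd\tau
\]
and a routine composition argument (using $\psi_i\in C^{1,\alpha}$) shows that $(s,\xi,y)\mapsto G(s,\psi_i(\xi),y)$ lies in $C^{\frac{1+\alpha}{2};1+\alpha}_0([0,T]\times(\overline{\mathbb{B}_{s_1}}\times M_2))$ with norm $\le C\|G\|_{C^{\frac{1+\alpha}{2};1+\alpha}_0([0,T]\times(M_1\times M_2))}$, we are reduced to the case $M_1=\overline{\mathbb{B}_{s_1}}$, i.e.\ to the flat parabolic Schauder norm in $x$. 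There is no need to flatten $M_2$ as well, since below only the boundedness of $G$ and of its seminorms uniformly in $y\in M_2$ will be used, together with $f\in L^1([0,T]\times M_2)$.

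Next I would estimate, one by one, the five quantities that make up $\|\cdot\|_{C^{\frac{1+\alpha}{2};1+\alpha}_0}$ in this flat setting, writing simply $\|G\|$ for the relevant norm of $G$. Because $G$ is of class $C^1$ in the spatial variables with $D_xG\in C^{\frac{\alpha}{2};\alpha}$, and $f\in L^1$ with $M_2$ compact, one may differentiate under the integral sign, so that $K[G,f]$ is $C^1$ in $x$ with $\partial_{x_i}K[G,f](t,x)=\int_0^t\int_{M_2}\partial_{x_i}G(t-\tau,x,y)f(\tau,y)\,d\sigma_yd\tau$. The sup norm of $K[G,f]$, the sup norm of $\partial_{x_i}K[G,f]$, and the $\alpha$-Hölder seminorm in $x$ of $\partial_{x_i}K[G,f]$ are then controlled by moving the supremum or the spatial increment inside the integral, bounding the resulting increment of $G$ or of $D_xG$ by $\|G\|$ times an appropriate power of the increment, and integrating $|f|$, which contributes a factor $\le\|f\|_{L^1}$. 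For the two temporal Hölder seminorms one splits, for $t_1<t_2$, the difference $\int_0^{t_2}(\cdots)-\int_0^{t_1}(\cdots)$ as $\int_0^{t_1}[(\cdots)(t_2-\tau)-(\cdots)(t_1-\tau)]\,d\tau+\int_{t_1}^{t_2}(\cdots)(t_2-\tau)\,d\tau$; the first piece is handled by the temporal Hölder seminorm of $G$ (and of $D_xG$ for the derivative), while for the second piece one uses that $G\in C^{\frac{1+\alpha}{2};1+\alpha}_0$ forces $G(0,\cdot,\cdot)=0$ \emph{and} $D_xG(0,\cdot,\cdot)=0$ (the spatial derivative of the zero function at $t=0$), whence $|G(t_2-\tau,x,y)|\le\|G\|\,|t_2-t_1|^{\frac{1+\alpha}{2}}$ and $|\partial_{x_i}G(t_2-\tau,x,y)|\le\|G\|\,|t_2-t_1|^{\alpha/2}$ for $\tau\in\mathopen]t_1,t_2\mathclose[$, while the leftover $\int_{t_1}^{t_2}\int_{M_2}|f|\le\|f\|_{L^1}$. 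Finally $K[G,f](0,x)=0$, so $K[G,f]$ indeed belongs to the subspace with subscript $0$, and assembling the five bounds yields the claimed estimate.

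The only genuinely delicate point — and the step I expect to be the main obstacle — is the treatment of the two \emph{diagonal} contributions $\int_{t_1}^{t_2}\int_{M_2}(\cdots)\,d\sigma_yd\tau$ that arise in the temporal Hölder estimates: since $f$ is merely summable, $\int_{t_1}^{t_2}\int_{M_2}|f|$ cannot be bounded by a power of $|t_2-t_1|$, so the required power must be extracted from the kernel $G$ instead, and it is precisely the vanishing of $G$ and of $D_xG$ at $t=0$ that makes this possible. The remaining verifications — the composition estimate used in the reduction and the bookkeeping of the several Schauder seminorms — are routine applications of the definitions recalled in Section \ref{sec:spaces} and of the standard composition arguments.
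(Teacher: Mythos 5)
Your proof is correct and follows essentially the same route as the paper: reduce to local parametrizations of $M_1$ (the paper also flattens $M_2$ via a partition of unity, which changes nothing substantial) and then bound each of the Schauder seminorms of $K[G,f]$ by the corresponding seminorm of $G$ times $\|f\|_{L^1}$. Your explicit treatment of the diagonal term $\int_{t_1}^{t_2}$ in the temporal H\"older estimates --- extracting the power of $|t_2-t_1|$ from the kernel via $G(0,\cdot,\cdot)=0$ and $D_xG(0,\cdot,\cdot)=0$ because $f$ is only summable --- correctly supplies the one genuinely delicate detail that the paper's proof leaves implicit.
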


\begin{proof}
Let $\{\phi_i\}_{i=1,\ldots,r_1}$ and $\{\psi_j\}_{j=1,\ldots,r_2}$ be local parametrizations of class 
$C^{1,\alpha}$ for $M_1$ and $M_2$, 
respectively. We can suppose that $\bigcup_{i=1}^{r_1} \phi_i(\mathbb{B}_{s_1}/2)= M_1$ and 
$\bigcup_{j=1}^{r_2} \psi_j(\mathbb{B}_{s_2}/2)= M_2$. Let $\{\theta_j\}_{j=1,\ldots,r_2}$ be a partition of unity 
subordinated to the parametrization $\{\psi_j\}_{j=1,\ldots,r_2}$. Let $\pi_1$ and  $\pi_2$ be the canonical projections 
of $\mathbb{R}^{n_1} \times \mathbb{R}^{n_2}$ onto $\mathbb{R}^{n_1}$ and $\mathbb{R}^{n_2}$, 
respectively. Clearly $M_1 \times M_2$ is a compact manifold of dimension $s_1+s_2$ and  of class 
$C^{1,\alpha}$ imbedded in $\mathbb{R}^{n_1} \times \mathbb{R}^{n_2}$, and 
\[
\{(\phi_i \circ \pi_1, \psi_j \circ \pi_2)\}_{\substack{i= 1,\ldots,r_1 \\ j=1,\ldots,r_2}}
\]
is a local parametrization of maps in $C^{1,\alpha}\left(\overline{\mathbb{B}_{s_1+s_2}},\mathbb{R}^{n_1+n_2}\right)$ for $M_1\times M_2$. It suffices to show that 
there exists a constant $c>0$ such that 
\begin{align*}
\sup_{i=1,\ldots,r_1}\|K[G,f] \circ &\phi_i^T\|_{C^{\frac{1+\alpha}{2};1+\alpha}_0([0,T] \times\overline{\mathbb{B}_{s_1}})} \\
\leq& c \sup_{\substack{i=1,\ldots,r_1\\j=1,\ldots,r_2}}
\|G \circ (\phi_i \circ \pi_1, \psi_j\circ \pi_2)^T\|_{C^{\frac{1+\alpha}{2};1+\alpha}_0([0,T] \times\overline{\mathbb{B}_{s_1+s_2}})} \\
&\times \sum_{j=1}^{r_2}\int_{0}^{T}\int_{\mathbb{B}_{s_2}}|f(\tau,\psi_j(\omega))\theta_j(\psi_j(\omega))| 
|(D\psi_j^t \cdot D\psi_j)(\omega)|^{1/2}\,d\omega d\tau,
\end{align*}
for all  $(G,f) \in C^{\frac{1+\alpha}{2};1+\alpha}_0\big([0,T] \times(M_1\times M_2)\big) \times L^1\big([0,T] \times M_2\big)$. 
The above inequality follows by the equality
\begin{align*}
K&[G,f](t,\phi_i(\xi)) \\
&=\sum_{j=1}^{r_2}\int_{0}^{t}\int_{\mathbb{B}_{s_2}}G(t-\tau, \phi_i(\xi),\psi_j(\omega))f(\tau,\psi_j(\omega))\theta_j(\psi_j(\omega))|(D\psi_j^t \cdot D\psi_j)(\omega)|^{1/2}\,d\omega d\tau \\
& \hspace{10cm}\forall (t,\xi) \in [0,T] \times{\overline{\mathbb{B}_{s_1}}},
\end{align*}
that holds for all  $(G,f) \in C^{\frac{1+\alpha}{2};1+\alpha}_0\big([0,T] \times(M_1\times M_2)\big) \times L^1\big([0,T] \times M_2\big)$, by classical differentiation theorems for integrals depending on a paramenter,  and 
by the continuity of the linear map that takes a summable function to its integral.
\end{proof}

The last result of this appendix  shows a regularity result for a time dependent superposition operator. 

\begin{lemma}\label{tdso}
Let $n_1, n_2, s \in \mathbb{N}$, $1 \leq s \leq n_2$, $n_1\geq 1$, 
$\alpha \in \mathopen ]0,1[$.  Let $T \in \mathopen ]0,+\infty[$. Let $M$ be a compact manifold of dimension $s$ and of class 
$C^{1,\alpha}$ imbedded in $\mathbb{R}^{n_2}$.  Let $\Omega $ be an open subset of $\mathbb{R}^{n_1}$. 
Let $F$ be a $C^{\infty}$ function from $\mathopen]-\infty,T]\times \Omega$ to $\mathbb{R}$ such that $F(t,x)=0$ for all $(t,x) \in \mathopen ]-\infty,0]\times \Omega$. Then the set 
\[
\mathcal{O} := \{\varphi \in C^{1,\alpha}(M,\mathbb{R}^{n_1}) : \varphi(M) \subseteq \Omega\}
\]
is open in $C^{1,\alpha}(M,\mathbb{R}^{n_1})$ and the superposition operator $T_F$ of $\mathcal{O}$ to 
$C^{\frac{1+\alpha}{2};1+\alpha}_0([0,T] \times M)$ defined by 
\[
T_F[\varphi] := F \circ  \varphi^T \qquad \forall \varphi \in \mathcal{O}
\]
is of class $C^\infty$.
\end{lemma}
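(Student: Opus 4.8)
The plan is to treat the openness of $\mathcal{O}$ by a routine compactness argument, then reduce the smoothness of $T_F$ to a flat domain via Lemma~\ref{lempar}, and finally obtain the $C^\infty$ regularity by an induction on the order of differentiation whose base case is a direct continuity estimate.

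\emph{Openness of $\mathcal{O}$.} Fix $\varphi_0\in\mathcal{O}$. If $\Omega=\mathbb{R}^{n_1}$ there is nothing to prove; otherwise $\varphi_0(M)$ is, being the continuous image of the compact set $M$, a compact subset of the open set $\Omega$, so $d:=\mathrm{dist}\bigl(\varphi_0(M),\mathbb{R}^{n_1}\setminus\Omega\bigr)>0$. If $\|\varphi-\varphi_0\|_{C^{1,\alpha}(M,\mathbb{R}^{n_1})}<d$ then $\sup_M|\varphi-\varphi_0|<d$, hence $\varphi(M)\subseteq\Omega$, which proves that $\mathcal{O}$ is open. Shrinking the radius, all such $\varphi$ take values in a fixed compact $K\subseteq\Omega$; I will use throughout that every derivative of $F$ is bounded on $[0,T]\times K$, which yields the estimates below uniformly in a neighbourhood of $\varphi_0$, and that $F$ and all its derivatives vanish on $\mathopen]-\infty,0]\times\Omega$.

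\emph{Reduction to the ball and well-definedness.} By Lemma~\ref{lempar} it suffices to show that $C_{\psi_i}\circ T_F$ is of class $C^\infty$ for each parametrization $\psi_i$ of $M$. Since $C_{\psi_i}\circ T_F[\varphi]=F\circ(\varphi\circ\psi_i)^T$ and $\varphi\mapsto\varphi\circ\psi_i$ is linear and continuous from $C^{1,\alpha}(M,\mathbb{R}^{n_1})$ to $C^{1,\alpha}(\overline{\mathbb{B}_s},\mathbb{R}^{n_1})$ (mapping $\mathcal{O}$ into the analogous set on the ball), and since precomposition with a linear continuous map preserves $C^\infty$ regularity, it is enough to prove the lemma with $M$ replaced by $\overline{\mathbb{B}_s}$. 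For such $\varphi$ one checks directly that $F\circ\varphi^T\in C^{\frac{1+\alpha}{2};1+\alpha}_0([0,T]\times\overline{\mathbb{B}_s})$: it vanishes at $t=0$; the estimate $|F(t_1,\varphi(x))-F(t_2,\varphi(x))|\le\|\partial_tF\|_{L^\infty([0,T]\times K)}|t_1-t_2|$ upgrades, on the bounded interval $[0,T]$ and since $\tfrac{1+\alpha}{2}<1$, to the required $\tfrac{1+\alpha}{2}$-Hölder bound in time; and $\partial_{x_i}(F\circ\varphi^T)=\sum_j(\partial_{y_j}F\circ\varphi^T)\,\partial_{x_i}\varphi_j$ lies in $C^{\frac{\alpha}{2};\alpha}$ because $\partial_{y_j}F\circ\varphi^T\in C^{\frac{\alpha}{2};\alpha}$ (same Lipschitz-to-Hölder trick in $t$, and $|\partial_{y_j}F(t,\varphi(x_1))-\partial_{y_j}F(t,\varphi(x_2))|\le C\|\varphi\|_{C^1}|x_1-x_2|$ in $x$), while $\partial_{x_i}\varphi_j\in C^{0,\alpha}(\overline{\mathbb{B}_s})$ embeds into $C^{\frac{\alpha}{2};\alpha}$ and that space is a Banach algebra. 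Continuity of $\varphi\mapsto F\circ\varphi^T$ follows from the same estimates applied to $F\circ\varphi^T-F\circ\varphi_0^T=\int_0^1\nabla_yF\bigl(t,\varphi_0(x)+s(\varphi(x)-\varphi_0(x))\bigr)\cdot(\varphi(x)-\varphi_0(x))\,ds$.

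\emph{Differentiability and induction.} The candidate differential is $dT_F[\varphi]\psi:=\sum_{j=1}^{n_1}(\partial_{y_j}F\circ\varphi^T)\,\psi_j$. Since $\partial_{y_j}F$ is again $C^\infty$ on $\mathopen]-\infty,T]\times\Omega$ and vanishes for $t\le0$, the previous paragraph (now using $\partial_t\partial_{y_j}F$ bounded on $[0,T]\times K$) gives $\partial_{y_j}F\circ\varphi^T\in C^{\frac{1+\alpha}{2};1+\alpha}_0$, and multiplication by a time-independent element of $C^{1,\alpha}(\overline{\mathbb{B}_s})$ maps $C^{\frac{1+\alpha}{2};1+\alpha}_0$ continuously into itself, so $dT_F[\varphi]$ is a bounded linear operator into the target. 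The remainder is estimated via the second-order Taylor formula
\[
F(t,\varphi(x)+\psi(x))-F(t,\varphi(x))-\nabla_yF(t,\varphi(x))\cdot\psi(x)=\int_0^1(1-s)\,\psi(x)^{\top}D^2_yF\bigl(t,\varphi(x)+s\psi(x)\bigr)\psi(x)\,ds,
\]
whose $C^{\frac{1+\alpha}{2};1+\alpha}_0$-norm is $O(\|\psi\|^2_{C^{1,\alpha}})$ by the same algebra/module estimates applied to $D^2_yF\circ(\varphi+s\psi)^T$ uniformly in $s\in[0,1]$ (legitimate once $\psi$ is small enough that $\varphi+s\psi\in\mathcal{O}$ for all $s$). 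Hence $T_F$ is Fréchet differentiable. Writing $dT_F[\varphi]=\sum_{j=1}^{n_1}B_j\bigl(T_{\partial_{y_j}F}[\varphi],\,\cdot\,\bigr)$, where $B_j\colon(g,\psi)\mapsto g\,\psi_j$ is a bounded bilinear, hence $C^\infty$, map, an induction on $k$ then gives: if $T_G$ is of class $C^k$ for every $G$ of the type in the statement, then $dT_F$ is of class $C^k$, hence $T_F$ is of class $C^{k+1}$; the base case $k=0$ is the continuity established above, so $T_F\in C^\infty$.

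\emph{Main obstacle.} The delicate step is the well-definedness/continuity part: one must verify that $F\circ\varphi^T$ genuinely lies in the \emph{anisotropic} space $C^{\frac{1+\alpha}{2};1+\alpha}_0$, with bounds uniform over bounded sets of $\varphi$, carefully tracking the mismatch between the $\tfrac{1+\alpha}{2}$-time-Hölder exponent demanded of the function and the $\tfrac{\alpha}{2}$-time-Hölder exponent demanded of its spatial gradient. What makes this go through is that $\varphi$ carries no time dependence (so all time regularity must be supplied by $F$, through boundedness of $\partial_tF$ and $\partial_t D_yF$) and that $[0,T]$ is bounded (so a Lipschitz-in-$t$ bound can be traded for the needed Hölder bound). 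Once this bookkeeping and the Banach-algebra/module properties of $C^{\frac{\alpha}{2};\alpha}$ and $C^{\frac{1+\alpha}{2};1+\alpha}$ are recorded, the differentiability estimate and the inductive step are routine.
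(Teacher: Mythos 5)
Your proof is correct, but it takes a genuinely different route from the paper's. Both arguments begin the same way: openness of $\mathcal{O}$ (the paper just notes that the $C^{1,\alpha}$-norm dominates the sup-norm; your compactness/distance argument is the standard elaboration of this) and reduction to the ball $\overline{\mathbb{B}_s}$ via Lemma \ref{lempar} and the linearity of $\varphi\mapsto\varphi\circ\psi_j$. From there the paper does \emph{not} work in the anisotropic space at all: it invokes Valent's theorem on composition operators to conclude that $\phi\mapsto F\circ\phi^T$ is of class $C^\infty$ into the \emph{isotropic} space $C^{1,\alpha}_0([0,T]\times\overline{\mathbb{B}_s})$ (time treated as just another variable), and then finishes with the single observation that $C^{1,\alpha}_0([0,T]\times\overline{\mathbb{B}_s})$ embeds linearly and continuously into $C^{\frac{1+\alpha}{2};1+\alpha}_0([0,T]\times\overline{\mathbb{B}_s})$ — the same ``Lipschitz-in-$t$ on a bounded interval upgrades to the required H\"older exponents'' fact that you use, but packaged once and for all as an embedding rather than rederived inside each estimate. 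You instead re-prove the composition theorem from scratch directly in the anisotropic space: well-definedness and continuity by explicit H\"older bookkeeping and the Banach-algebra property of $C^{\frac{\alpha}{2};\alpha}$, Fr\'echet differentiability by a second-order Taylor remainder, and $C^\infty$ by the bootstrap $dT_F[\varphi]=\sum_jB_j(T_{\partial_{y_j}F}[\varphi],\cdot)$. The paper's route buys brevity at the cost of an external reference; yours is self-contained, makes explicit which derivatives of $F$ supply the parabolic regularity, and amounts to an anisotropic analogue of Valent's theorem. All the individual steps you use (the Lipschitz-to-H\"older trade on $[0,T]$, the module property of $C^{\frac{1+\alpha}{2};1+\alpha}_0$ over time-independent $C^{1,\alpha}$ multipliers, the convexity needed for the Taylor formula) check out.
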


\begin{proof}
The set $\mathcal{O}$ is open in $C^{1,\alpha}(M,\mathbb{R}^{n_1})$ because the $C^{1,\alpha}$-norm is stronger than 
the norm of the uniform convergence.  
Let  $\{\psi_j\}_{j=1,\ldots,r}$ be a local parametrization of class 
$C^{1,\alpha}$ for $M$.  Lemma \ref{lempar} implies that, to prove the lemma, it suffices to show that the operator 
$C_{\psi_j} \circ T_F$ from $\mathcal{O}$ to $C^{\frac{1+\alpha}{2};1+\alpha}_0([0,T] \times\overline{\mathbb{B}_s})$ 
given by 
\[
C_{\psi_j} \circ T_F[\varphi] = F \circ  (\varphi \circ \psi_j)^T \qquad  \forall \varphi \in \mathcal{O}
\]
is of class $C^\infty$ for all $j = 1,\ldots,r$.
The map from $C^{1,\alpha}(M,\mathbb{R}^{n_1})$ to $C^{1,\alpha}(\overline{\mathbb{B}_s},\mathbb{R}^{n_1})$ that 
takes $\phi$ to $\phi \circ \psi_j$ is linear and continuous and then of class $C^\infty$. Accordingly, it suffices 
to prove that the superposition operator from 
\[
\mathcal{O}' := \{\phi \in C^{1,\alpha}(\overline{\mathbb{B}_s},\mathbb{R}^{n_1}) : \phi(\overline{\mathbb{B}_s}) \subseteq \Omega\}
\]
to $C^{\frac{1+\alpha}{2};1+\alpha}_0([0,T] \times\overline{\mathbb{B}_s})$ that takes $\phi$ to 
$F \circ \phi^T$ is of class $C^\infty$.  This fact is a consequence of known results on composition operators (for instance, see B\"ohme and Tomi \cite[p. 10]{BoTo73}, Henry  \cite[p. 29]{He82}, and  
Valent  \cite[Thm 4.4, p. 35]{Va88}). Indeed, exploiting \cite[Thm. 4.4, p. 35]{Va88} we obtain that 
 the superposition operator that takes $\phi$ to 
$F \circ  \phi^T$ is of class $C^\infty$ from
\[
\mathcal{O}' := \{\phi \in C^{1,\alpha}(\overline{\mathbb{B}_s},\mathbb{R}^{n_1}) : \phi(\overline{\mathbb{B}_s}) \subseteq \Omega\}
\]
to 
\[
C^{1,\alpha}_0([0,T] \times \overline{\mathbb{B}_s}) := \{f \in C^{1,\alpha}([0,T] \times \overline{\mathbb{B}_s})  : 
f(0,x) = 0 \,\,\, \forall x \in  M\}.
\]
Finally, to complete the proof we note that the embedding of  
$C^{1,\alpha}_0([0,T] \times\overline{\mathbb{B}_s}) $
 into $C^{\frac{1+\alpha}{2};1+\alpha}_0([0,T] \times\overline{\mathbb{B}_s})$ is linear and continuous.
\end{proof}

In our paper we will apply Lemma \ref{tdso} to the fundamental solution $S_n(t,x)$ of the heat equation. We observe that $S_n(t,x)$ is real analytic in $x$ for each $t$ fixed, but it is only $C^\infty$ with respect to the pair $(t,x)$ (the problem being for $t=0$). This fact prevents us from proving a real analytic result for the layer 
potentials (for example, we can not use Valent \cite[Thm. 5.2]{Va88}).

\subsection*{Acknowledgment}

The authors are members of the `Gruppo Nazionale per l'Analisi Matematica, la Probabilit\`a e le loro Applicazioni' (GNAMPA) of the `Istituto Nazionale di Alta Matematica' (INdAM). P.L. acknowledges the support of the Project BIRD191739/19 `Sensitivity analysis of partial differential equations in
the mathematical theory of electromagnetism' of the University of Padova.

\end{appendices}

\end{document}